\newtheorem{Theorem}{Theorem}[section]
\theoremstyle{plain}
\newtheorem{Lemma}[Theorem]{Lemma}
\newtheorem{Definition}[Theorem]{Definition}
\newtheorem{Remark}[Theorem]{Remark}
\newtheorem{Corollary}[Theorem]{Corollary}
\newtheorem{Proposition}[Theorem]{Proposition}
\numberwithin{equation}{section}
\title{Branching Brownian Motion with spatially-homogeneous and point-catalytic branching}
\author{Sergey Bocharov\footnote{S.Bocharov:  
Department of Mathematics, Zhejiang University, Zheda Road, Hangzhou 310027, China, 
e-mail: bocharov@zju.edu.cn. The author is supported by NSFC grant (No.11731012)}, 
Li Wang\footnote{Li Wang: 
School of Sciences, Beijing University of Chemical Technology, Beijing, P.R. China, 
e-mail: wangli@mail.buct.edu.cn. The author is supported by NSFC grant (No.
11301020)}}
\begin{document}
\maketitle
\begin{abstract}
We consider a model of Branching Brownian Motion in which the usual spatially-homogeneous and catalytic branching at a single point are simultaneously present. We establish the almost sure growth rates of population in certain time-dependent regions and as a consequence the first-order asymptotic behaviour of the rightmost particle.
\end{abstract}
\section{Introduction and main results}
\subsection{Description of the model}
We are going to consider a model of Branching Brownian Motion in which the usual spatially-homogeneous and catalytic branching at a single point are present simultaneously. 

Such a process is initiated with a single particle at time $0$ whose spatial position at time $t \geq 0$ up until the time it dies is given by $X_t$ where $(X_t)_{t \geq 0}$ is distributed like a standard Brownian motion. We let $T'$ and $T_0$ be two random times which are independent conditional on $(X_t)_{t \geq 0}$ and satisfy $P(T' > t | (X_s)_{s \geq 0}) = \mathrm{e}^{- \beta t}$ 
and $P(T_0 > t | (X_s)_{s \geq 0}) = \mathrm{e}^{- \beta_0 L_t}$, where $\beta > 0$ and $\beta_0 > 0$ are some constants and $(L_t)_{t \geq 0}$ is the local time at $0$ of $(X_t)_{t \geq 0}$. Note that almost surely $X_{T_0} = 0$ and $X_{T'} \neq 0$.

At the time $T' \wedge T_0$ the initial particle dies and is replaced with a random number of new particles. If $T_0 < T'$ then the number of new particles follows 
some given distribution $(q_n)_{n \geq 1}$. Otherwise it follows a different distribution $(p_n)_{n \geq 1}$.

All the new particles then independently of each other and of the past stochastically repeat the behaviour of their parent starting from $X_{T' \wedge T_0}$. That is,
they move like Brownian motions, die after random times giving births to new particles, etc.

Note that all particles always produce at least one child upon their death ruling out the possibility of population extinction.

An equivalent (up to indistinguishability) description would be to say that after a random time $T$ such that $P(T > t | (X_s)_{s \geq 0}) = \mathrm{e}^{- \beta_0 L_t - \beta t}$ the initial particle dies, and 
at position $x$ where it died it is replaced with a random number $A(x)$ of new particles where for $n \geq 1$
\begin{equation}
\label{Ax}
P(A(x) = n) = \left\{
\begin{array}{rl}
q_n& \text{if } x = 0, \\
p_n& \text{if } x \neq 0 
\end{array} \right.
\end{equation}
and the new particles then stochastically repeat the behaviour of their parent starting from $x$.
Thus the model can be thought of as the BBM model with spatially-inhomogeneous branching rate $\beta_0 \delta_0(\cdot) + \beta$, where $\delta_0(\cdot)$ is the Dirac delta function, and spatially-inhomogeneous offspring distribution given by \eqref{Ax} since informally we may say that $L_t = \int_0^t \delta_0(X_s) \mathrm{d}s$ (this can be made formal via the theory of additive functionals of Brownian motion).

Our model combines in a natural way the classical BBM model with constant branching and the BBM model with a single catalytic point. The first one has been studied for many decades and numerous asymptotic results are available for it (see e.g. \cite{M75}, \cite{B78}, \cite{LS87}, \cite{R13}). The catalytic model has been given less attention (one may for example look at \cite{K99} for a general review of the topic).

\subsection{Some notation}
Using common practice we label the initial particle by $\varnothing$ and all the other particles according to the Ullam-Harris convention. So that for example particle ``$\varnothing32$" is the second child of the third child of the initial particle.   

For two particles $u$ and $v$ we shall write $u < v$ if $u$ is an ancestor of $v$ So for example 
$\varnothing < \varnothing3 < \varnothing32$. We shall write $|u|$ for the number of ancestors of the particle $u$. So for example $|\varnothing32| = 2$.

We denote the set of all particles in the system at time $t$ by $N_t$ and for every particle $u \in N_t$ we let $X^u_t$ denote its spatial position at time $t$ and $(X^u_s)_{s \in [0,t]}$ its histrorical path up to time $t$ with $L_t^u$ the local time at $0$ of $(X^u_s)_{s \in [0,t]}$. We also define
\[ 
R_t := \sup \{X^u_t \ : \ u \in N_t\}
\] 
to be the position of the rightmost particle at time $t$.

We let $m_0 = \sum_{n \geq 1}n q_n$ be the mean of the offspring distribution due to catalytic branching and $m = \sum_{n \geq 1}n p_n$ the mean of 
the offspring distribution due to homogeneous branching. For convenience we also define effective branching rates
\[
\hat{\beta} := \beta(m-1)
\] 
and
\[
\hat{\beta}_0 := \beta_0(m_0 - 1) \text{.}
\]
Finally, we let $(\mathcal{F}_t)_{t \geq 0}$ be the natural filtration of the branching process and $P$ 
the associated probability measure with the corresponding expectation $E$.

\subsection{Motivation}
In this subsection we present a few simple calculations which should motivate our main results in the next subsection.
 
Let us define for any $x \in \mathbb{R}$ and $t \geq 0$ 
\begin{equation}
\label{set_ntx}
N_t^x := \{ u \in N_t \ : \ X^u_t >x\}
\end{equation}
to be the set of particles in the system at time $t$ whose spatial position is bigger than $x$. A simple application of the widely used  "many-to-one" formula (see Subsection 2.2) gives an exact expression 
for $E|N_t^x|$:
\begin{Proposition}
\label{prop_enx}
For any $x \geq 0$ and $t \geq 0$
\begin{equation}
\label{eq_enx}
E |N_t^x| = \Phi \Big( \hat{\beta}_0 \sqrt{t} - \frac{x}{\sqrt{t}}\Big) \exp \Big\{ \frac{1}{2} \hat{\beta}_0^2t -\hat{\beta}_0x + \hat{\beta}t \Big\} \text{,}
\end{equation}
where $\Phi(x) = \mathbb{P} (\mathcal{N}(0, 1) \leq x) = \int_{- \infty}^x (2 \pi)^{-1/2} \mathrm{e}^{- y^2/2} \mathrm{d}y$ is the cumulative 
distribution function of a standard normal random variable.
\end{Proposition}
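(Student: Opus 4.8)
The plan is to reduce the first moment $E|N_t^x|$ to an expectation for a single Brownian motion via the many-to-one formula of Subsection 2.2, and then to carry out an explicit Gaussian computation. Recall that the model has branching rate $\beta_0\delta_0(\cdot)+\beta$ with offspring mean $m_0$ at the origin and $m$ elsewhere, so the ``accumulated effective branching'' along a path equals $\hat\beta_0 L_t+\hat\beta t$ (using, informally, $\int_0^t\delta_0(\xi_s)\,\diffd s=L_t$, exactly the point made after \eqref{Ax}). Hence, writing $(\xi_s)_{s\ge 0}$ for a standard Brownian motion started at $0$ with local time $L_t$ at $0$ (and $\E,\Prob$ for the associated expectation and probability),
\begin{equation*}
E|N_t^x|=E\Big[\sum_{u\in N_t}\Ind_{\{X^u_t>x\}}\Big]=\E\big[\Ind_{\{\xi_t>x\}}\,\mathrm{e}^{\hat\beta_0 L_t+\hat\beta t}\big]=\mathrm{e}^{\hat\beta t}\,\E\big[\Ind_{\{\xi_t>x\}}\,\mathrm{e}^{\hat\beta_0 L_t}\big].
\end{equation*}
Thus the whole problem collapses to evaluating the joint transform $\E[\Ind_{\{\xi_t>x\}}\mathrm{e}^{\hat\beta_0 L_t}]$ of the pair $(\xi_t,L_t)$.

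For this I would invoke the classical joint law of position and local time at $0$ for Brownian motion from $0$: for $\ell>0$ and $y\in\R$,
\begin{equation*}
\Prob(\xi_t\in\diffd y,\ L_t\in\diffd\ell)=\frac{\ell+|y|}{\sqrt{2\pi t^3}}\exp\Big\{-\frac{(\ell+|y|)^2}{2t}\Big\}\,\diffd y\,\diffd\ell,
\end{equation*}
which follows from Lévy's identity $(L_t,|\xi_t|)\stackrel{d}{=}(S_t,\,S_t-\xi_t)$ (with $S_t$ the running maximum), the reflection principle for $(S_t,\xi_t)$, and independence of the sign of $\xi_t$ from $(|\xi_t|,L_t)$. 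Here is exactly where the hypothesis $x\ge 0$ enters: on $\{\xi_t>x\}$ one has $y>x\ge 0$, so $|y|=y$ throughout the region of integration, and the substitution $u=\ell+y$ gives $\int_x^\infty\frac{\ell+y}{\sqrt{2\pi t^3}}\mathrm{e}^{-(\ell+y)^2/(2t)}\,\diffd y=\frac{1}{\sqrt{2\pi t}}\mathrm{e}^{-(\ell+x)^2/(2t)}$, leaving
\begin{equation*}
\E\big[\Ind_{\{\xi_t>x\}}\,\mathrm{e}^{\hat\beta_0 L_t}\big]=\int_0^\infty\frac{1}{\sqrt{2\pi t}}\,\mathrm{e}^{\hat\beta_0\ell-(\ell+x)^2/(2t)}\,\diffd\ell.
\end{equation*}

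The last step is a completion of the square in $\ell$: $\hat\beta_0\ell-\frac{(\ell+x)^2}{2t}=-\frac{(\ell+x-\hat\beta_0 t)^2}{2t}-\hat\beta_0 x+\frac12\hat\beta_0^2 t$, which pulls the factor $\exp\{\frac12\hat\beta_0^2 t-\hat\beta_0 x\}$ outside; the remaining integral, after the change of variables $z=(\ell+x-\hat\beta_0 t)/\sqrt t$, equals $\int_{x/\sqrt t-\hat\beta_0\sqrt t}^{\infty}(2\pi)^{-1/2}\mathrm{e}^{-z^2/2}\,\diffd z=\Phi\big(\hat\beta_0\sqrt t-\tfrac{x}{\sqrt t}\big)$. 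Multiplying back by $\mathrm{e}^{\hat\beta t}$ yields precisely \eqref{eq_enx}. None of the computations is delicate; the only genuine care needed is to quote the joint density of $(\xi_t,L_t)$ with the normalization of local time consistent with the definition of $T_0$, and to note that $x\ge 0$ is what forces $|y|=y$ on the relevant set (for $x<0$ the integral over $y\in(x,0)$ would contribute an additional term and the closed form would change). $\qed$
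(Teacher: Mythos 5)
Your proof is correct and follows essentially the same route as the paper's: reduce to a single Brownian motion via the Many-to-One lemma, plug in the Karatzas--Shreve joint density of $(\xi_t,\tilde L_t)$, integrate in $y$ and then complete the square in $\ell$. The explicit remark on where the hypothesis $x\ge 0$ is used (so that $|y|=y$ on the domain) is a nice addition that the paper leaves implicit.
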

In particular, for any $\lambda \geq 0$ we get 
\begin{equation}
\label{enlt}
E|N_t^{\lambda t}|  = \Phi \Big( \big(\hat{\beta}_0 - \lambda\big) \sqrt{t}\Big) \exp \Big\{ \big(\frac{1}{2}\hat{\beta}_0^2 - \hat{\beta}_0\lambda + \hat{\beta}\big)t\Big\}.
\end{equation}
Using the fact that $\Phi(x) \sim (2 \pi)^{-1/2}|x|^{-1} \mathrm{e}^{-x^2/2}$ as $x \to - \infty$ and $\Phi(x) \to 1$ as $x \to \infty$ we can then see that 
\begin{equation}
\label{log_e_lambda}
\frac{1}{t} \log E|N_t^{\lambda t}| \to \Delta_\lambda \text{ as } t \to \infty \text{,}
\end{equation}
where
\begin{equation}
\label{delta_lambda}
\Delta_{\lambda} = \left\{
\begin{array}{rl}
\frac{1}{2} \hat{\beta}_0^2 - \hat{\beta}_0 \lambda + \hat{\beta}& \text{if } \lambda \leq \hat{\beta}_0, \\
- \frac{1}{2} \lambda^2 + \hat{\beta}& \text{if } \lambda \geq \hat{\beta}_0 .
\end{array} \right. 
\end{equation}
We can then observe that $\Delta_\lambda$ takes positive or negative values according to whether $\lambda < \lambda_{crit}$ 
or $\lambda > \lambda_{crit}$, where 

\begin{equation}
\label{lambda_crit}
\lambda_{crit} = \left\{
\begin{array}{rl}
\frac{\hat{\beta}}{\hat{\beta}_0} + \frac{1}{2} \hat{\beta}_0& \text{if } \hat{\beta} \leq \frac{1}{2}\hat{\beta}_0^2, \\
\sqrt{2 \hat{\beta}}& \text{if } \hat{\beta} \geq \frac{1}{2}\hat{\beta}_0^2 .
\end{array} \right.
\end{equation}
Since the expected number of particles above the line $(\lambda_{crit} + \epsilon)t$ decays exponentially with $t$ and the expected number of particles 
above the line $(\lambda_{crit} - \epsilon)t$ grows exponentially with $t$ we may interpret $\lambda_{crit}$ for now as the speed of the rightmost particle 
"in expectation".

Also using symmetry or a direct calculation we may find the expected total population at any time $t \geq 0$:
\begin{equation}
\label{eq_en}
E |N_t| = 2 E |N_t^0| = 2 \Phi \Big( \hat{\beta}_0 \sqrt{t}\Big) \exp \Big\{ \big( \frac{1}{2} \hat{\beta}_0^2 + \hat{\beta}\big) t \Big\} \text{.}
\end{equation}

In particular,
\begin{equation}
\label{exp_pop_assymp}
E |N_t| \sim 2  \exp \Big\{ \big( \frac{1}{2} \hat{\beta}_0^2 + \hat{\beta}\big) t \Big\} \text{ as } t \to \infty \text{.}
\end{equation}

\subsection{Main results}
Our aim is to replace convergences in expectation from the previous subsection with the almost sure convergences.

For all the results in this subsection we shall have to impose an additional condition on the offspring distribution commonly known as the $X \log X$ condition (for some discussion see e.g. \cite{K04}, \cite{Lyons} or \cite{LYS13}):
\begin{equation}
\label{xlogx}
\sum_{n \geq 1} p_n n \log n < \infty \text{ and } \sum_{n \geq 1} q_n n \log n < \infty \text{.}
\end{equation}
This condition is needed to ensure that certain martingales have non-zero limits as we shall see in Subsection 2.3.

Our first result, which should be compared with \eqref{exp_pop_assymp}, is the almost sure approximation of the population size.
\begin{Lemma}
\label{population_as}
Suppose that condition \eqref{xlogx} on the offspring distribution is satisfied. Then 
\begin{equation}
\label{eq_pop_as}
\lim_{t \to \infty} \frac{1}{t} \log |N_t| = \frac{1}{2} \hat{\beta}_0^2 + \hat{\beta} \qquad P \text{-a.s.}
\end{equation}
\end{Lemma}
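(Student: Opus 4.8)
The plan is to establish the two bounds separately, the upper bound via a first-moment (Markov) argument and the lower bound by exhibiting a suitable martingale with a non-degenerate limit.

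For the upper bound, I would fix $\eps > 0$ and apply Markov's inequality together with \eqref{eq_en}: since $E|N_t| \sim 2\exp\{(\tfrac12\hat\beta_0^2 + \hat\beta)t\}$, we get $P\big(|N_t| \geq \exp\{(\tfrac12\hat\beta_0^2 + \hat\beta + \eps)t\}\big) \leq C\exp\{-\eps t\}$. This is summable along the integer times $t = n \in \N$, so Borel--Cantelli gives $\limsup_{n\to\infty} \tfrac1n \log|N_{n}| \leq \tfrac12\hat\beta_0^2 + \hat\beta + \eps$ almost surely. To pass from integer times to all real times I would control the fluctuation of $|N_t|$ over a unit time interval: on $[n, n+1]$ the population can only grow, and the number of branching events is dominated by the behaviour of the underlying branching mechanism; a crude bound (e.g. comparing with a pure spatially-homogeneous BBM of rate $\beta$ run for unit time started from $|N_n|$ particles, plus a contribution from the catalytic births, whose total is controlled since each particle spends only a bounded local time at $0$ in expectation) shows $\sup_{t\in[n,n+1]}|N_t| \leq |N_n|\cdot Z_n$ where the $Z_n$ have light enough tails that $\tfrac1n\log Z_n \to 0$ a.s. Letting $\eps \downarrow 0$ along a sequence then yields $\limsup_{t\to\infty}\tfrac1t\log|N_t| \leq \tfrac12\hat\beta_0^2 + \hat\beta$.

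For the lower bound I would use the additive martingale naturally associated with the total mass. By the many-to-one formula and \eqref{eq_en}, the process
\[
M_t := \frac{|N_t|}{E|N_t|} = \frac{|N_t|}{2\,\Phi(\hat\beta_0\sqrt t)\exp\{(\tfrac12\hat\beta_0^2+\hat\beta)t\}}
\]
(or, more robustly, the genuine additive martingale $W_t := \sum_{u\in N_t} \varphi(X^u_t, t)$ whose construction via a spine/change-of-measure argument is exactly what Subsection 2.3 is advertised to provide) is a nonnegative martingale, hence converges a.s. to some limit $W_\infty \geq 0$. The $X\log X$ condition \eqref{xlogx} is precisely what guarantees (again, per the promised results in Subsection 2.3, in the spirit of \cite{Lyons}, \cite{LYS13}) that $W_\infty > 0$ almost surely on the event of non-extinction — which here is the whole space, since every particle has at least one child. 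On $\{W_\infty > 0\}$ we have $|N_t| \sim W_\infty\cdot 2\Phi(\hat\beta_0\sqrt t)\exp\{(\tfrac12\hat\beta_0^2+\hat\beta)t\}$, and since $\Phi(\hat\beta_0\sqrt t) \to 1$ this gives $\liminf_{t\to\infty}\tfrac1t\log|N_t| \geq \tfrac12\hat\beta_0^2 + \hat\beta$ a.s. Combining the two bounds proves \eqref{eq_pop_as}.

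The main obstacle is the positivity of the martingale limit $W_\infty$ under \eqref{xlogx}: the spatially-inhomogeneous, singular branching rate $\beta_0\delta_0(\cdot) + \beta$ means the spine decomposition is not the textbook one, and one has to verify that the spine's local time at $0$ does not accumulate pathologically and that the extra catalytic offspring (distribution $(q_n)$, also subject to $n\log n$ integrability) do not spoil the $L^1$-convergence. A secondary, more technical nuisance is the real-time interpolation in the upper bound, where one must rule out a super-exponential burst of catalytic activity in a bounded time window; this should follow from the fact that a single Brownian path accumulates only $O(1)$ local time at $0$ over a unit interval, so the expected number of catalytic births per unit time per particle is bounded, but it needs to be made precise.
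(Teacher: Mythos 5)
Your upper bound is essentially the paper's argument (Markov's inequality with \eqref{eq_en}, Borel--Cantelli along integer times), and the ``secondary nuisance'' you raise about bursts inside $[n,n+1]$ is moot: since every particle is replaced by at least one child, $t\mapsto|N_t|$ is non-decreasing, so $|N_t|\le|N_{\lceil t\rceil}|$ and the integer-time bound interpolates in one line; no control of any $Z_n$ or of the catalytic activity in a unit window is needed.

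The lower bound, however, has a genuine gap. Your primary candidate $|N_t|/E|N_t|$ is not a martingale here: the branching rate $\beta_0\delta_0(\cdot)+\beta$ is spatially inhomogeneous, so $E^x|N_s|$ depends on $x$ and the branching property does not normalise the raw count into a martingale. Your fallback, ``the genuine additive martingale $W_t=\sum_{u\in N_t}\varphi(X^u_t,t)$'', is left unspecified, and the inference from $W_\infty>0$ to $|N_t|\sim W_\infty\,E|N_t|$ is unjustified (and stronger than anything the paper proves): an additive martingale weights particles by position-dependent factors, so positivity of its limit does not by itself give asymptotics of the unweighted population. The missing idea is the specific choice $\varphi(x,t)=\mathrm{e}^{-\hat\beta_0|x|-\frac12\hat\beta_0^2t-\hat\beta t}$, i.e. the martingale $M^\pm_t$ of \eqref{eq_M_pm}. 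With this choice two things happen: first, since each weight is at most $\mathrm{e}^{-(\frac12\hat\beta_0^2+\hat\beta)t}$, one has the elementary inequality $|N_t|\ge \mathrm{e}^{(\frac12\hat\beta_0^2+\hat\beta)t}M^\pm_t$, which is all that is needed in place of your asymptotic equivalence; second, the associated change of measure gives the spine constant drift $\hat\beta_0$ towards the origin, so $\tilde L_t/t\to\hat\beta_0$ and the spine term in the spine decomposition decays at exactly the rate $\frac12\hat\beta_0^2+\hat\beta$, which combined with the $X\log X$ Borel--Cantelli dichotomy yields $M^\pm_\infty>0$ $P$-a.s. (Proposition \ref{M_pm}). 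Identifying this martingale --- equivalently, explaining why the rate $\frac12\hat\beta_0^2+\hat\beta$ is achieved by a mean-one additive martingale with non-degenerate limit, catalytic births included --- is precisely the content you have deferred to ``what Subsection 2.3 is advertised to provide'', so as written the lower bound is not established.
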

Next, unarguably the most important result of this paper, is the almost sure approximation of 
$|N_t^{\lambda t}|$ and it should be compared with \eqref{log_e_lambda}.
\begin{Lemma}
\label{pop_lambda_as}
Suppose that condition \eqref{xlogx} is satisfied. Take any $\lambda > 0$ and let $\Delta_\lambda$ be as in \eqref{delta_lambda} and $\lambda_{crit}$ as in 
\eqref{lambda_crit}.

If $\lambda < \lambda_{crit}$ then 
\begin{equation}
\label{subcrit}
\lim_{t \to \infty} \frac{1}{t} \log |N_t^{\lambda t}| = \Delta_\lambda \ (> 0) \qquad P \text{-a.s.}
\end{equation}

If $\lambda > \lambda_{crit}$ then
\begin{equation}
\label{supercrit1}
\lim_{t \to \infty} |N_t^{\lambda t}| = 0 \qquad P \text{-a.s.}
\end{equation}

and furthermore
\begin{equation}
\label{supercrit2}
\lim_{t \to \infty} \frac{1}{t} \log P (|N_t^{\lambda t}| > 0) = \Delta_{\lambda} \ (< 0) \text{.}
\end{equation}
\end{Lemma}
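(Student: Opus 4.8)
The plan is to prove matching upper and lower bounds for $\frac1t\log|N_t^{\lambda t}|$ and for $\frac1t\log P(|N_t^{\lambda t}|>0)$, working up to an arbitrarily small error, first along integer times and then interpolating. The upper bounds (the almost sure upper bound in \eqref{subcrit}, all of \eqref{supercrit1}, and ``$\le$'' in \eqref{supercrit2}) come from a first-moment estimate and Borel--Cantelli on top of Proposition~\ref{prop_enx} and \eqref{enlt}--\eqref{log_e_lambda}. The one extra ingredient is a ``historical path'' version of the many-to-one formula of Subsection~2.2: writing $M^u_t:=\sup_{s\le t}X^u_s$, a reflection-principle computation analogous to the proof of Proposition~\ref{prop_enx} (condition on the first hitting time of a level, and use the joint behaviour of $(\xi_t,L_t)$ for a Brownian motion $\xi$) gives $\frac1t\log E\big[\#\{u\in N_t:M^u_t>\lambda t\}\big]\to\Delta_\lambda$. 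Since no particle dies without producing offspring, every $u\in N_t^{\lambda t}$ has a descendant $v\in N_{\lceil t\rceil}$ with $M^v_{\lceil t\rceil}\ge X^u_t>\lambda t\ge\lambda\lfloor t\rfloor$, so $|N_t^{\lambda t}|\le\#\{v\in N_{\lceil t\rceil}:M^v_{\lceil t\rceil}>\lambda\lfloor t\rfloor\}$; Markov's inequality at integer times plus Borel--Cantelli then yields $\limsup_t\frac1t\log|N_t^{\lambda t}|\le\Delta_\lambda$ a.s.\ when $\lambda<\lambda_{crit}$, and (as $\Delta_\lambda<0$ when $\lambda>\lambda_{crit}$) the events $\{\#\{v\in N_{n+1}:M^v_{n+1}>\lambda n\}\ge1\}$ are summable, giving \eqref{supercrit1}, while $P(|N_t^{\lambda t}|>0)\le E|N_t^{\lambda t}|=e^{(\Delta_\lambda+o(1))t}$ gives the upper bound in \eqref{supercrit2}.

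\emph{A second-moment estimate along a corridor.} Since only the $X\log X$ condition \eqref{xlogx} is assumed, we first truncate the offspring laws at a level $K$; the associated effective rates satisfy $\hat\beta^{(K)}\uparrow\hat\beta$ and $\hat\beta_0^{(K)}\uparrow\hat\beta_0$, hence $\Delta^{(K)}_\lambda\uparrow\Delta_\lambda$ and $\lambda^{(K)}_{crit}\uparrow\lambda_{crit}$, and a $K$-truncated BBM may be coupled as a sub-population of the original one. For the (finite-variance) $K$-truncated process introduce a curved corridor $\mathcal C_t$ around the trajectory that is optimal for $\{M^u_t>\lambda t\}$ in the many-to-one formula — a tube around the line of slope $\lambda$ when $\lambda\ge\hat\beta_0$, and a tube around a path that dwells near $0$ for a fixed fraction of the time and then moves ballistically when $\lambda<\hat\beta_0$ — and set
\[
 Z_t:=\#\big\{u\in N_t:\ X^u_t>\lambda t,\ (X^u_s)_{s\le t}\in\mathcal C_t\big\}.
\]
The many-to-one formula shows the corridor is free on the exponential scale, $E Z_t=e^{(\Delta^{(K)}_\lambda+o(1))t}$; a many-to-two computation, organised by the time and position of the most recent common ancestor of a pair and using that $\mathcal C_t$ confines that ancestor, gives $E Z_t^2\le e^{o(t)}\big(e^{2\Delta^{(K)}_\lambda t}+e^{\Delta^{(K)}_\lambda t}\big)$. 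If $\lambda>\lambda_{crit}$ then $\Delta^{(K)}_\lambda<0$ for large $K$, the diagonal term dominates, and $P(|N_t^{\lambda t}|>0)\ge P(Z_t>0)\ge(E Z_t)^2/E Z_t^2=e^{(\Delta^{(K)}_\lambda+o(1))t}$; letting $K\to\infty$ and combining with the upper bound proves \eqref{supercrit2}. If $\lambda<\lambda_{crit}$ then $\Delta^{(K)}_\lambda>0$ for large $K$, and Paley--Zygmund gives $P\big(Z_t\ge\tfrac12 E Z_t\big)\ge(E Z_t)^2/(4E Z_t^2)\ge e^{-\eta(t)t}$ with $\eta(t)\to0$; in other words a $K$-truncated BBM started at any point $x$ with $|x|\le Ct$ (the rate then depending continuously on $x$) and run for time $t$ has at least $e^{(\Delta^{(K)}_{v}-\eps)t}$ particles above level $x+vt$ with probability at least $e^{-\eta(t)t}$, for every $v<\lambda^{(K)}_{crit}$.

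\emph{Upgrading the subcritical case to an almost sure statement.} Fix small $\delta,\eps>0$, put $s:=\delta t$, $r:=t-s$, and set $\lambda':=(\lambda+(\lambda_{crit}+\eps)\delta)/(1-\delta)$, which is $<\lambda_{crit}$ (hence $<\lambda^{(K)}_{crit}$ for large $K$) once $\delta,\eps$ are small. By the upper bounds and the symmetry $x\mapsto-x$ of the model, all of $N_s$ lies in $[-(\lambda_{crit}+\eps)s,(\lambda_{crit}+\eps)s]$ for all large $s$; by Lemma~\ref{population_as}, $|N_s|\ge e^{(\frac12\hat\beta_0^2+\hat\beta-\eps)s}$ for all large $s$. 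Each $v\in N_s$ needs net speed $\le\lambda'$ to push a descendant to level $\lambda t$ over the remaining time $r$, so by the previous paragraph — applied to the sub-populations rooted at the particles of $N_s$, which are independent given $\mathcal F_s$ — each such sub-population independently has at least $e^{(\Delta^{(K)}_{\lambda'}-\eps)r}$ particles above $\lambda t$ at time $t$ with conditional probability at least $e^{-\eta(r)r}$, and any one of them contributes particles above $\lambda t$ to $N_t$. Hence, on the eventually-sure event above,
\[
 P\big(|N_t^{\lambda t}|<e^{(\Delta^{(K)}_{\lambda'}-\eps)r}\,\big|\,\mathcal F_s\big)\le\big(1-e^{-\eta(r)r}\big)^{|N_s|}\le\exp\Big(-e^{(c\delta-\eta(r)(1-\delta))t}\Big),\qquad c:=\tfrac12\hat\beta_0^2+\hat\beta-\eps>0,
\]
which is doubly-exponentially small since $\eta(r)\to0$; summing over $t\in\N$, Borel--Cantelli (interpolating to all $t$ by the running-maximum bound) gives $|N_t^{\lambda t}|\ge e^{(\Delta^{(K)}_{\lambda'}-\eps)(1-\delta)t}$ for all large $t$, a.s. Letting $\eps\downarrow0$, then $\delta\downarrow0$ (so $\lambda'\downarrow\lambda$ and, by continuity of $\Delta_\cdot$, $\Delta^{(K)}_{\lambda'}(1-\delta)\to\Delta^{(K)}_\lambda$), then $K\to\infty$, and intersecting full-measure events gives $\liminf_t\frac1t\log|N_t^{\lambda t}|\ge\Delta_\lambda$ a.s.; with the upper bound this is \eqref{subcrit}.

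\emph{The main obstacle.} Everything except the bound $E Z_t^2\le e^{o(t)}\big(e^{2\Delta^{(K)}_\lambda t}+e^{\Delta^{(K)}_\lambda t}\big)$ is routine. That estimate is the delicate point: the optimal trajectory for $\{X^u_t>\lambda t\}$ changes character at $\lambda=\hat\beta_0$, so $\mathcal C_t$ must be designed differently in the two regimes, and in each case one must verify — using the sharp exponent of Proposition~\ref{prop_enx} rather than a crude bound, and balancing the local-time weight $e^{\hat\beta_0 L_s}$ against the spatial displacement cost — that pairs whose most recent common ancestor splits at an intermediate time contribute only $e^{o(t)}$ on the relevant exponential scale. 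Calibrating $\mathcal C_t$ so that simultaneously $E Z_t=e^{(\Delta^{(K)}_\lambda+o(1))t}$ and these intermediate pairs are suppressed will be the crux of the argument.
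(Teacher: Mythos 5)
Your upper-bound half (running maximum at integer times, first moment, Markov plus Borel--Cantelli) is essentially the paper's argument, and your overall lower-bound architecture --- grow a large population by time $\delta t$ resp.\ $p^\ast t$, then let the subtrees rooted there try independently and conclude by Borel--Cantelli --- parallels the paper's Propositions \ref{positive_population}, \ref{chebyshev} and \ref{lower_pop_lambda}. However, the engine you propose for the single-subtree estimates is different from the paper's and, as you yourself state, it is not carried out: the bound $E Z_t^2 \le e^{o(t)}\big(e^{2\Delta^{(K)}_\lambda t}+e^{\Delta^{(K)}_\lambda t}\big)$ along the corridor, together with the companion claim that the corridor is free at the exponential scale for the first moment, is precisely the non-routine content, and every subsequent step (Paley--Zygmund for \eqref{supercrit2}, the $e^{-\eta(t)t}$ success probability feeding the sprinkling argument for \eqref{subcrit}) rests on it. In the catalytic setting this estimate is not a cosmetic verification: the many-to-two formula carries the weight $e^{\hat{\beta}_0\tilde{L}_s}$ of the common ancestor's local time, the optimal path changes character at $\lambda=\hat{\beta}_0$, and pairs created while the ancestor dwells at the catalyst must be balanced against the displacement cost --- exactly the computation you defer. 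A further concrete gap is the uniformity you invoke when applying the Paley--Zygmund output to the subtrees rooted at $N_{\delta t}$: because the catalyst sits at the origin, the quantity is not translation invariant, so the statement ``started from any $x$ with $|x|\le Ct$, at least $e^{(\Delta^{(K)}_v-\eps)t}$ particles above $x+vt$ with probability $e^{-\eta(t)t}$'' does not follow from the origin-based corridor computation; you would need the second-moment bound redone (or shown uniform) for starting points of order $\delta t$ and for the absolute target level $\lambda t$. As it stands, therefore, the proposal is a plausible programme rather than a proof.

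For comparison, the paper avoids second moments altogether. The single-subtree inputs are obtained for the purely homogeneous process: the a.s.\ count lower bound (Proposition \ref{pre_lower1}) comes from the strict positivity of the additive martingales $M^\lambda$ (Kyprianou's result, Proposition \ref{M_lambda}, which only needs \eqref{xlogx}, so no truncation is required), and the survival-probability lower bound (Proposition \ref{pre_lower2}) comes from a change of measure plus the spine decomposition and conditional Jensen, i.e.\ a ``first moment under $\tilde{Q}$'' argument in place of your many-to-two bound. These homogeneous estimates are translation invariant, which is why the paper can simply ignore catalytic branching after time $p^\ast t$ and apply them to the subtrees rooted in $N^{-\delta p t}_{pt}$, closing the argument with the conditional Chebyshev bound of Proposition \ref{chebyshev}. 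If you want to complete your route, the corridor/many-to-two computation (with the local-time weight and uniformly in the starting point) is the step you must supply; alternatively, replacing that step by the martingale/spine inputs above reduces your plan to essentially the paper's proof.
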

As a direct corollary of Lemma \ref{pop_lambda_as} we establish the almost sure speed of the rightmost particle.
\begin{Corollary}
\label{cor_rightmost}
Suppose that condition \eqref{xlogx} is satisfied. Then 
\begin{equation}
\label{rightmost}
\lim_{t \to \infty} \frac{R_t}{t} = \lambda_{crit} \qquad P \text{-a.s.,}
\end{equation}
where $\lambda_{crit}$ is given in \eqref{lambda_crit}.
\end{Corollary}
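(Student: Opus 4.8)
The plan is to deduce the Corollary directly from Lemma \ref{pop_lambda_as} by sandwiching $R_t/t$ between $\lambda_{crit}-\eps$ and $\lambda_{crit}+\eps$ for arbitrarily small $\eps>0$, exploiting the fact that $|N_t^{\lambda t}|$ records exactly whether there is a particle strictly to the right of $\lambda t$: we have $|N_t^{\lambda t}|\ge 1$ iff $R_t>\lambda t$, and $|N_t^{\lambda t}|=0$ iff $R_t\le\lambda t$.

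For the lower bound, fix any $\lambda$ with $0<\lambda<\lambda_{crit}$. By \eqref{subcrit}, $\tfrac1t\log|N_t^{\lambda t}|\to\Delta_\lambda>0$ $P$-a.s., so in particular $|N_t^{\lambda t}|\to\infty$ and hence $|N_t^{\lambda t}|\ge 1$ for all sufficiently large $t$. On that event there is some $u\in N_t$ with $X^u_t>\lambda t$, so $R_t>\lambda t$ eventually and therefore $\liminf_{t\to\infty}R_t/t\ge\lambda$ $P$-a.s. Intersecting the corresponding almost sure events over a countable sequence $\lambda_k\uparrow\lambda_{crit}$ with $\lambda_k>0$ yields $\liminf_{t\to\infty}R_t/t\ge\lambda_{crit}$ $P$-a.s.

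For the upper bound, fix any $\lambda>\lambda_{crit}$. By \eqref{supercrit1}, $|N_t^{\lambda t}|\to 0$ $P$-a.s.; since $|N_t^{\lambda t}|$ is a non-negative integer, this forces $|N_t^{\lambda t}|=0$ for all sufficiently large $t$, i.e. every particle satisfies $X^u_t\le\lambda t$ and so $R_t\le\lambda t$ eventually. Hence $\limsup_{t\to\infty}R_t/t\le\lambda$ $P$-a.s., and intersecting over a countable sequence $\lambda_k\downarrow\lambda_{crit}$ gives $\limsup_{t\to\infty}R_t/t\le\lambda_{crit}$ $P$-a.s. Combining the two bounds proves \eqref{rightmost}.

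Since the substance of the statement is contained in Lemma \ref{pop_lambda_as}, which we may assume, there is no genuine obstacle here; the only points that need (minor) care are that the exceptional null sets must be collected along a fixed countable set of slopes before letting $\eps\to 0$, and that the passage from the limit statement \eqref{supercrit1} to ``$|N_t^{\lambda t}|=0$ eventually'' relies on the integrality of $|N_t^{\lambda t}|$. In the degenerate case $\lambda_{crit}=0$ the lower bound above is vacuous, but then \eqref{rightmost} still follows from the upper bound together with the trivial bound $R_t\ge X^u_t$ along any fixed infinite line of descent, whose position is a Brownian motion and hence satisfies $X^u_t/t\to 0$ $P$-a.s.
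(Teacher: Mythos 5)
Your proof is correct and follows essentially the same route as the paper: deduce the lower bound on $\liminf R_t/t$ from \eqref{subcrit} and the upper bound on $\limsup R_t/t$ from \eqref{supercrit1}, then let $\lambda\to\lambda_{crit}$ along a countable sequence. The extra remarks on integrality and the degenerate case $\lambda_{crit}=0$ are harmless elaborations of the same argument.
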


\subsection{Outline of the paper}
The rest of this article is organised as follows. In Subsection 2.1 we follow the standard procedure of extending the probability space by constructing the spine process over our branching system. Then in Subsection 2.2 we recall the Many-to-One formula for branching processes and apply it to prove Proposition \ref{prop_enx} as well as the upper bounds for Lemma \ref{population_as} and Lemma \ref{pop_lambda_as}. In Subsection 2.3 we construct certain change-of-mesure martingales and use them to prove lower bound for Lemma \ref{population_as} as well as ``preliminary" lower bounds for Lemma \ref{pop_lambda_as}. In subsections 3.1 and 3.2 we give the heuristic argument and the formal proof for lower bounds in Lemma \ref{pop_lambda_as}. Finally, we conclude the paper with the proof of Corollary \ref{cor_rightmost}.

\section{Spine results and applications}

\subsection{Spine construction}
In this section we extend our probability space by introducing the spine process. A more detailed description of this procedure may be found for example in \cite{HH09}.

The spine of the branching process, which we shall denote by $\xi$, is the infinite line of descent of particles chosen uniformly at random from all possible lines of descent. It is constructed in the following way. The initial particle $\varnothing$ of the branching process begins the spine. When the initial particle dies and is replaced with a random number of new particles, one of them is chosen uniformly at random to continue the spine. This procedure is then repeated recursively: whenever the particle in the spine dies, one of its children is chosen uniformly at random to continue the spine. We may then write the spine as 
$\xi = \{ \xi_0, \xi_1, \xi_2, \cdots\}$, where $\xi_n$ is the lable of the spine particle in the $n$th generation and $\xi_0 = \varnothing$. 

We let $\tilde{P}$ denote the probabiity measure under which the branching process is defined together with the spine. Hence $P = \tilde{P}|_{\mathcal{F}_\infty}$. We let $\tilde{E}$ be the expectation corresponding to $\tilde{P}$.

Below introduce some new notation in relation with the spine process.

For any $t \geq 0$ we let $node_t(\xi) := u$ for the unique particle $u \in N_t \cap \xi$. That is, 
$node_t(\xi)$ is the label of the spine particle at time $t$.

For any $t \geq 0$ we let $\xi_t := X^u_t$ for the unique particle $u \in N_t \cap \xi$. So that $\xi_t$ is the spatial position of the spine particle at time $t$. It is not hard to check that the process $(\xi_t)_{t \geq 0}$ is a Brownian motion under $\tilde{P}$. We shall denote by $(\tilde{L}_t)_{t \geq 0}$ its local time at the origin.

For any $t \geq 0$ we let $n_t := n$ for the unique $n$ such that $\xi_n \in N_t$. So that $(n_t)_{t \geq 0}$ is the counting process of the number of fissions that have occurred along the path of the spine by time $t$. We denote the sequence of such fission times by $S_n$ and the number of particles produced at each such fission by $A_n$, $n \geq 1$.

Moreover, we would like to distinguish fissions along the spine that occurred due to catalytic branching from those that occurred due to homogeneous branching. In order to do so we denote the branching times along the spine that took place when the spine was at the origin by $S_n^0$ and the number of particles produced at these times  by $A_n^0$, $n \geq 1$. Similarly, we denote the branching times along the spine when it was not at the origin by $S_n'$and the number of particles produced at these times by $A_n'$, $n \geq 1$. We also denote the counting processes for $(S_n^0)_{n \geq 1}$ and $(S_n')_{n \geq 0}$ by $(n_t^0)_{t \geq 0}$ and $(n_t')_{t \geq 0}$ respectively.

Observe that conditional on the path of the spine $(\xi_t)_{t \geq 0}$, 
$(n_t^0)_{t \geq 0}$ and $(n_t')_{t \geq 0}$ are independent (inhomogeneous in the first case) Poisson processes (or Cox processes) with jump rates $\beta_0 \delta_0(\cdot)$ and $\beta$ respectively so that
\[
\tilde{P}\Big( n_t^0 = k \big\vert (\xi_s)_{0 \leq s \leq t}\Big) = \frac{(\beta_0 \tilde{L}_t)^k}{k!} \mathrm{e}^{- \beta_0 \tilde{L}_t}.
\]
and
\[
\tilde{P}\Big( n_t' = k \big\vert (\xi_s)_{0 \leq s \leq t}\Big) = \frac{(\beta t)^k}{k!} \mathrm{e}^{- \beta t}
\]
Finally, it is convinient to define several filtrations of the now extended probability space in order to take various conditional expectations.
\begin{Definition}[Filtrations]
\label{filtrations} $ $
\begin{itemize}
\item $\mathcal{F}_t$ was defined in Subsection 1.2. It is the filtration which contains all the information about all the particles' motion and their genealogy. It doesn't however have any information about the spine.
\item $\tilde{\mathcal{F}}_t := \sigma \big(\mathcal{F}_t, \ node_t(\xi)\big)$. Thus $\tilde{\mathcal{F}}$ has all the information about the branching process and all the information about the spine. This will be the largest filtration.
\item $\mathcal{G}_t := \sigma \big( \xi_s : 0 \leq s \leq t \big)$. This filtration only contains information about the path of the spine but it doesn't know which particles make up the spine along its path at different times.
\item $\tilde{\mathcal{G}}_t := \sigma \big( \mathcal{G}_t, \ node_s(\xi) : 0 \leq s \leq t , \ A^0_n : n \leq n^0_t, \ A_m' : m \leq n_t'\big)$. This filtration has information about both the path of the spine, its genealogy and how many particles are born along the path of the spine. However it has no information  about anything happenning off the spine.
\end{itemize}
\end{Definition}
We note that $\mathcal{F}_t \subset \tilde{\mathcal{F}}_t$ and $\mathcal{G}_t \subset \tilde{\mathcal{G}}_t \subset \tilde{\mathcal{F}}_t$.

\subsection{Many-to-One lemma and applications}
The proof of the following result with a detailed discussion can be found for example in \cite{HH09} or \cite{HR}.
\begin{Lemma}[Many-to-One Lemma]
\label{many_to_one_lemma}
Let $Y$ be a non-negative $\tilde{\mathcal{F}}_t$-measurable random variable. It can be decomposed as
\[
Y = \sum_{u \in N_t} Y(u) \mathbf{1}_{\{ node_t(\xi) = u \}} \text{,}
\]
where for all $u \in N_t$, $Y(u)$ is $\mathcal{F}_t$-measurable and then 
\[
E^x \Big( \sum_{u \in N_t} Y(u) \Big) = \tilde{E}^x \Big( Y \mathrm{e}^{\hat{\beta}_0 \tilde{L}_t + \hat{\beta} t}\Big) \text{,}
\]
In particular, if $f$ is a sufficiently nice functional then
\begin{equation}
\label{many_to_one_eq}
E^x \Big[ \sum_{u \in N_t} f \Big( \big(X^u_s\big)_{s \in [0,t]}\Big) \Big] = \tilde{E}^x \Big[ f\Big(\big(\xi_s\big)_{s \in [0,t]} \Big) 
\mathrm{e}^{\hat{\beta}_0 \tilde{L}_t + \hat{\beta} t}\Big] \text{.}
\end{equation}
\end{Lemma}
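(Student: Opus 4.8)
# Proof Proposal for the Many-to-One Lemma

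**The plan is to** prove the identity by conditioning on the spine's spatial trajectory and genealogy, reducing the branching expectation to a weighted single-path expectation. The fundamental observation is that under $\tilde{P}$, the spine picks out a single distinguished line of descent, and the Radon–Nikodym derivative between $P$ (the branching law) and the spine-path law is governed by the extra mass created along the spine: each fission along the spine at a point with $k$ offspring contributes a factor $k$ relative to a non-branching path, and integrating over the Poisson structure of fissions produces exactly the exponential weight $\mathrm{e}^{\hat\beta_0 \tilde L_t + \hat\beta t}$.

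**First I would** set up the decomposition carefully. Given the spine trajectory $(\xi_s)_{s\in[0,t]}$ under $\tilde P$, the fissions along the spine split into catalytic ones (at the origin, occurring at rate $\beta_0$ in local time $\tilde L_t$) and homogeneous ones (occurring at rate $\beta$ in ordinary time $t$), and these are conditionally independent Poisson/Cox processes as recalled in the spine construction. At the $n$-th such fission, $A_n$ children are produced and the spine selects one uniformly, so the conditional probability of the particular spine-continuation is $1/A_n$. Conversely, to reconstruct the law of the unmarked branching process restricted to $\F_t$ from $\tilde P$, one must sum over all $|N_t|$ choices of which particle carries the spine; this is where the factor $\prod A_n$ appears. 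Concretely, for $Y = \sum_{u\in N_t} Y(u)\Ind_{\{node_t(\xi)=u\}}$ with $Y(u)$ being $\F_t$-measurable,
\[
\tilde E^x\Big(Y\, \mathrm{e}^{\hat\beta_0\tilde L_t + \hat\beta t}\Big)
= \tilde E^x\Big(\mathrm{e}^{\hat\beta_0\tilde L_t + \hat\beta t}\, \tilde E^x\big[Y \,\big|\, \F_t\big]\Big),
\]
and the inner conditional expectation, using that the spine is chosen uniformly along each branch, equals $\tilde E^x[\,Y\mid \F_t\,] = \sum_{u\in N_t} Y(u)\, \tilde P(node_t(\xi)=u \mid \F_t)$, where $\tilde P(node_t(\xi)=u\mid \F_t) = \prod_{v<u}\frac{1}{A_v}$ with $A_v$ the number of children of ancestor $v$.

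**Next I would** compute the compensating expectation. The key computation is that, conditioning further on $\G_t = \sigma(\xi_s : s\le t)$ (the spine path only), the factor $\mathrm{e}^{\hat\beta_0\tilde L_t + \hat\beta t}$ is precisely what makes the number of spine fissions with their offspring sizes have the right size-biased law. Using the conditional Poisson structure, for the homogeneous part one has $\tilde E^x[\,z^{n_t'}\mid \G_t] = \mathrm{e}^{\beta t(z-1)}$ at a formal level, and the key size-biasing identity is that $\tilde E^x\big[\prod_{m\le n_t'} A_m' \;\big|\; \G_t\big] = \mathrm{e}^{\beta t(m-1)} = \mathrm{e}^{\hat\beta t}$, since averaging $A_m'$ against the offspring law $(p_n)$ gives mean $m$, and summing the Poisson series $\sum_k \frac{(\beta t)^k}{k!}\mathrm{e}^{-\beta t} m^k = \mathrm{e}^{\beta t (m-1)}$. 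The identical computation for the catalytic part replaces $\beta t$ by $\beta_0 \tilde L_t$ and $m$ by $m_0$, giving $\mathrm{e}^{\hat\beta_0 \tilde L_t}$. Combining, one sees that the weight $\mathrm{e}^{\hat\beta_0\tilde L_t + \hat\beta t}$ exactly cancels the reciprocal-product normalization and converts the spine-path expectation into the full branching sum; the display $E^x(\sum_{u\in N_t} Y(u)) = \tilde E^x(Y\,\mathrm{e}^{\hat\beta_0\tilde L_t + \hat\beta t})$ then follows by Fubini. The specialization to $Y(u) = f((X^u_s)_{s\in[0,t]})$, which is automatically of the required form since then $Y = f((\xi_s)_{s\in[0,t]})$, gives \eqref{many_to_one_eq}.

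**The main obstacle** I anticipate is the bookkeeping of conditional laws across the nested filtrations $\G_t \subset \tilde\G_t \subset \tilde\F_t$: one must be careful that the ``size-biasing'' of the offspring distribution along the spine under $\tilde P$ is correctly identified and that the uniform choice of spine child is independent of the offspring count in the right way, so that the average of $A_v$ over the spine-child choice genuinely returns the plain mean $m$ or $m_0$. Since this is a standard result (as the excerpt notes, full details are in \cite{HH09} or \cite{HR}), I would keep the argument at the level of stating the change-of-measure structure and the two Poisson size-biasing computations, and cite those references for the measure-theoretic details rather than grinding through them; alternatively one can simply invoke the cited sources and present only the decomposition in the statement as the content needing verification for our specific two-rate (catalytic plus homogeneous) setting.
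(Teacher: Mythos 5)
Your overall plan matches what the paper does in spirit: the authors give no proof at all, only the pointer to \cite{HH09} and \cite{HR}, and your two ingredients --- the uniform-choice identity $\tilde{P}(node_t(\xi)=u\mid\mathcal{F}_t)=\prod_{v<u}A_v^{-1}$ and the conditional-Poisson size-biasing computation $\tilde{E}\big[\prod_{n\le n_t}A_n\mid\mathcal{G}_t\big]=\mathrm{e}^{\hat{\beta}t+\hat{\beta}_0\tilde{L}_t}$ --- are exactly the right ones. But the way you splice them together contains a genuine flaw. In your first display you condition on $\mathcal{F}_t$ and pull $\mathrm{e}^{\hat{\beta}_0\tilde{L}_t+\hat{\beta}t}$ outside the conditional expectation; since $\tilde{L}_t$ is the local time of the spine's path, that weight is only $\tilde{\mathcal{F}}_t$-measurable, not $\mathcal{F}_t$-measurable, so the identity $\tilde{E}^x\big(Y\mathrm{e}^{\hat{\beta}_0\tilde{L}_t+\hat{\beta}t}\big)=\tilde{E}^x\big(\mathrm{e}^{\hat{\beta}_0\tilde{L}_t+\hat{\beta}t}\,\tilde{E}^x[Y\mid\mathcal{F}_t]\big)$ is unjustified. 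The correct assembly runs in the opposite order: since $Y(u)\prod_{v<u}A_v$ is $\mathcal{F}_t$-measurable, the tower property over $\mathcal{F}_t$ together with the uniform-choice identity gives
\[
E^x\Big(\sum_{u\in N_t}Y(u)\Big)=\tilde{E}^x\Big(\sum_{u\in N_t}Y(u)\Big(\prod_{v<u}A_v\Big)\mathbf{1}_{\{node_t(\xi)=u\}}\Big)=\tilde{E}^x\Big(Y\prod_{n=1}^{n_t}A_n\Big),
\]
using that on $\{node_t(\xi)=u\}$ the offspring numbers of the ancestors of $u$ are precisely the spine quantities $A_1,\dots,A_{n_t}$; only after this does one replace the weight $\prod_{n\le n_t}A_n$ by its conditional expectation given $\mathcal{G}_t$, which is your Poisson computation and produces $\mathrm{e}^{\hat{\beta}_0\tilde{L}_t+\hat{\beta}t}$.

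That final replacement is exactly where your ``then follows by Fubini'' hides the issue: it is legitimate only when $Y$ depends on the spine through its path alone, i.e. is $\mathcal{G}_t$-measurable, as in \eqref{many_to_one_eq}. For a general $\tilde{\mathcal{F}}_t$-measurable $Y$ of the decomposed form the exponential-weight identity can fail outright: with binary offspring ($p_2=q_2=1$, so $\hat{\beta}=\beta$, $\hat{\beta}_0=\beta_0$) take $Y(u)=\mathbf{1}_{\{u=\varnothing\}}$, so that $Y=\mathbf{1}_{\{n_t=0\}}$; then $E\big(\sum_{u\in N_t}Y(u)\big)=P(\varnothing\in N_t)\le\mathrm{e}^{-\beta t}<1$, while $\tilde{E}\big(Y\mathrm{e}^{\hat{\beta}_0\tilde{L}_t+\hat{\beta}t}\big)=\tilde{E}\big(\mathrm{e}^{-\beta t-\beta_0\tilde{L}_t}\mathrm{e}^{\beta_0\tilde{L}_t+\beta t}\big)=1$. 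So if you write the argument out, prove the identity with the weight $\prod_{n\le n_t}A_n$ for general $Y$, and pass to the exponential weight only for spine-path functionals --- which covers \eqref{many_to_one_eq}, the only form the paper ever uses --- or else simply cite \cite{HH09}, \cite{HR} for the precise statement, as the authors themselves do.
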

Let us now apply \eqref{many_to_one_eq} to prove equations \eqref{eq_enx} and \eqref{eq_en} given as 
the motivation in the first section.
\begin{proof}[Proof of Proposition \ref{prop_enx} and identity \eqref{eq_en}]
Take $x \geq 0$ and $t \geq 0$. Then
\begin{align}
\label{eq_enx1}
E |N_t^x| &=  E \sum_{u \in N_t} \mathbf{1}_{\{X^u_t > x\}}\nonumber\\
&= \tilde{E} \Big[ \mathbf{1}_{\{\xi_t > x\}} \mathrm{e}^{\hat{\beta}_0 \tilde{L_t} + \hat{\beta} t}\Big] \text{.}
\end{align}
We now make use of the joint density of $\xi_t$ and $\tilde{L}_t$ (which for example can be found in \cite{KS84}):
\[
\tilde{P} \Big( \xi_t \in \mathrm{d}y, \ \tilde{L}_t \in \mathrm{d} l\Big) = 
\frac{|y| + l}{\sqrt{2 \pi t^3}} \exp \Big\{ - \frac{(|y| + l)^2}{2t}\Big\} \mathrm{d}y \mathrm{d}l \qquad y \in \mathbb{R}, \ l \geq 0 
\]
to complete the proof of \eqref{eq_enx}.
\begin{align*}
\tilde{E} \Big[ \mathbf{1}_{\{\xi_t > x\}} \mathrm{e}^{\hat{\beta_0} \tilde{L_t} + \hat{\beta} t}\Big] &= 
\mathrm{e}^{\hat{\beta} t} \int_0^\infty \int_x^{\infty} \mathrm{e}^{\hat{\beta}_0 l} \frac{y + l}{\sqrt{2 \pi t^3}} 
\exp \Big\{ - \frac{(y+l)^2}{2t}\Big\} \mathrm{d}y \mathrm{d}l\\
&= \mathrm{e}^{\hat{\beta} t} \int_0^\infty \mathrm{e}^{\hat{\beta}_0 l} \frac{1}{\sqrt{2 \pi t}} 
\exp \Big\{ - \frac{(x+l)^2}{2t}\Big\} \mathrm{d}l\\
&= \mathrm{e}^{\hat{\beta} t} \int_0^\infty \frac{1}{\sqrt{2 \pi t}} \exp \Big\{ - \frac{1}{2t} \big( l - (\hat{\beta}_0t - x)\big)^2 
+ \frac{\hat{\beta}_0^2}{2}t - \hat{\beta}_0 x \Big\} \mathrm{d}l\\
&= \mathrm{e}^{\hat{\beta}t + \frac{\hat{\beta}_0^2}{2}t - \hat{\beta}_0 x} \int_{- (\hat{\beta}_0 \sqrt{t} - \frac{x}{\sqrt{t}})}^\infty 
\frac{1}{\sqrt{2 \pi}} \mathrm{e}^{- \frac{z^2}{2}} \mathrm{d}z\\
&= \Phi \Big( \hat{\beta}_0 \sqrt{t} - \frac{x}{\sqrt{t}}\Big) \exp \Big\{ \frac{1}{2} \hat{\beta}_0^2t -\hat{\beta}_0x + \hat{\beta}t \Big\} \text{.}
\end{align*}
For the expected total population we could have followed a similar calculation:
\begin{equation}
\label{eq_en2}
E|N_t| = \tilde{E} \Big[ \mathrm{e}^{\hat{\beta_0} \tilde{L_t} + \hat{\beta} t} \Big] = \cdots = 
2 \Phi \Big( \hat{\beta}_0 \sqrt{t}\Big) \exp \Big\{ \big( \frac{1}{2} \hat{\beta}_0^2 + \hat{\beta}\big) t \Big\}
\end{equation}

\end{proof}
Let us now prove the upper bound for Lemma \ref{population_as}.
\begin{Proposition}[Upper bound for Lemma \ref{population_as}]
\label{upper_pop}
\begin{equation}
\label{eq_upper_pop}
\limsup_{t \to \infty} \frac{1}{t} \log |N_t| \leq \frac{1}{2} \hat{\beta}_0^2 + \hat{\beta} \qquad P \text{-a.s.}
\end{equation}
\end{Proposition}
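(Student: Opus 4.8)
The standard strategy is a first-moment (Markov inequality) plus Borel–Cantelli argument, using the exact formula \eqref{exp_pop_assymp} for $E|N_t|$. Set $\gamma := \frac{1}{2}\hat\beta_0^2 + \hat\beta$. First I would establish the statement along the integer subsequence $t = n \in \N$. By Markov's inequality and \eqref{exp_pop_assymp}, for any $\eps > 0$,
\[
P\bigl(|N_n| > \mathrm{e}^{(\gamma + \eps)n}\bigr) \leq \mathrm{e}^{-(\gamma+\eps)n}\, E|N_n| \leq C\, \mathrm{e}^{-\eps n}
\]
for some constant $C$ and all large $n$. Since $\sum_n \mathrm{e}^{-\eps n} < \infty$, Borel–Cantelli gives that almost surely $|N_n| \leq \mathrm{e}^{(\gamma+\eps)n}$ for all large $n$, hence $\limsup_{n\to\infty} \frac{1}{n}\log|N_n| \leq \gamma + \eps$; letting $\eps \downarrow 0$ through a countable sequence yields the bound along integers.

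Next I would interpolate to all real $t$. The process $t \mapsto |N_t|$ is not monotone in general (particles can die and be replaced by a single child, so the count can go down), so I cannot simply sandwich $|N_t|$ between $|N_{\lfloor t\rfloor}|$ and $|N_{\lceil t\rceil}|$. Instead I would control the growth over a unit time interval. One clean way: for $t \in [n, n+1]$, bound $E[|N_{t}| \mid \F_n] $; by the branching property this equals $|N_n|$ times the expected population of a single-particle system run for time $t - n \leq 1$, which by \eqref{eq_en} is at most $E|N_1| =: K < \infty$. Then
\[
P\Bigl(\sup_{t \in [n,n+1]} |N_t| > \mathrm{e}^{(\gamma+\eps)n}\,\Big\vert\, \F_n\Bigr)
\]
can be handled by applying Markov's inequality to $\sup_{t\in[n,n+1]}|N_t|$, which is itself at most the total number of particles that ever exist during $[n,n+1]$ descended from $N_n$; its conditional expectation is $|N_n|$ times a finite constant depending only on $\beta$, $\beta_0$, $m$, $m_0$ (the expected number of births in unit time is finite because the branching rate is $\beta$ off the origin plus a local-time term with finite expectation). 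Combining with the integer-time bound $|N_n| \leq \mathrm{e}^{(\gamma+\eps)n}$ and another Borel–Cantelli gives $\sup_{t\in[n,n+1]} |N_t| \leq \mathrm{e}^{(\gamma + 2\eps)n}$ eventually, which delivers \eqref{eq_upper_pop}.

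The main obstacle is the interpolation step: because the population count is not monotone and births happen at rate involving the singular local-time term at $0$, one must argue that the expected number of particles created (or alive) in a unit time window, starting from a configuration of $k$ particles, is bounded by $Ck$ with $C$ independent of the configuration. This reduces to a single-particle estimate — the expected local time accrued by a Brownian motion in unit time is bounded (indeed $\tilde E[\tilde L_1] < \infty$), so the expected number of catalytic births in unit time is bounded, and homogeneous births occur at constant rate $\beta$ — which can be made rigorous via the many-to-one formula \eqref{many_to_one_eq} applied with $f$ counting births, or simply by noting $E|N_{n+1}| / E|N_n| \to \mathrm{e}^{\gamma}$ is bounded. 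Everything else is routine Markov-plus-Borel–Cantelli bookkeeping.
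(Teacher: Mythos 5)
Your integer-time argument (Markov inequality with $E|N_n|$ from \eqref{eq_en}/\eqref{exp_pop_assymp}, then Borel--Cantelli, then $\eps\downarrow 0$) is exactly the paper's argument. Where you diverge is the interpolation to real $t$, and there your premise is false for this model: the offspring distributions $(p_n)_{n\ge1}$ and $(q_n)_{n\ge1}$ are supported on $n\ge 1$, i.e.\ every dying particle is replaced by at least one child (the paper states this explicitly to rule out extinction), so $t\mapsto|N_t|$ \emph{is} non-decreasing --- note that even in the scenario you describe, a particle replaced by a single child leaves the count unchanged, not smaller. The paper exploits this monotonicity to finish in one line, via $\frac{1}{t}\log|N_t|\le\frac{\lceil t\rceil}{t}\,\frac{\log|N_{\lceil t\rceil}|}{\lceil t\rceil}$. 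Your alternative interpolation --- bounding $\sup_{t\in[n,n+1]}|N_t|$ by the total progeny alive at some point in $[n,n+1]$, controlling its conditional expectation given $\mathcal F_n$ by $C|N_n|$ with $C$ uniform over configurations (which does hold, since the expected unit-time local time at $0$ is bounded uniformly in the starting point and homogeneous births occur at constant rate), and running a second Markov--Borel--Cantelli step --- is sound and could be completed, and would even cover models allowing deaths without offspring; but for the model at hand it is unnecessary machinery, and as written it is only a sketch of the uniform first-moment bound that the monotonicity argument lets you skip entirely.
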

\begin{proof}
Fix $\epsilon > 0$. Then by the Markov inequality and \eqref{eq_en} 
\begin{align*}
P \Big( \frac{1}{n} \log |N_n| > \frac{1}{2}\hat{\beta}_0^2 + \hat{\beta} + \epsilon \Big) &= 
P \Big( |N_n| > \mathrm{e}^{(\frac{1}{2}\hat{\beta}_0^2 + \hat{\beta} + \epsilon)n} \Big)\\ 
&\leq \mathrm{e}^{-(\frac{1}{2}\hat{\beta}_0^2 + \hat{\beta} + \epsilon)n} E|N_n|
< 2 \mathrm{e}^{- \epsilon n}.
\end{align*}
It follows from the Borel-Cantelli Lemma that 
\[
P \Big( \Big\{ \frac{1}{n} \log |N_n| > \frac{1}{2}\hat{\beta}_0^2 + \hat{\beta} + \epsilon \Big\} \text{ i.o. }\Big) = 0
\]
and thus 
\[
\limsup_{n \to \infty} \frac{1}{n} \log |N_n| \leq \frac{1}{2} \hat{\beta}_0^2 + \hat{\beta} + 
\epsilon \qquad P \text{-a.s.}
\]
By letting $\epsilon \to 0$ we establish \eqref{eq_upper_pop} with the limit taken over integer times. To 
get convergence over any real-valued sequence we note that $(|N_t|)_{t \geq 0}$ is a non-decreasing process and so for any $t > 0$
\[
\frac{1}{t}\log |N_t| \leq \frac{\lceil t \rceil}{t} \frac{\log |N_{\lceil t \rceil}|}{\lceil t \rceil}
\]
and hence
\[
\limsup_{t \to \infty} \frac{1}{t} \log |N_t| \leq \limsup_{t \to \infty} 
\frac{\log |N_{\lceil t \rceil}|}{\lceil t \rceil}
\leq \frac{1}{2} \hat{\beta}_0^2 + \hat{\beta} \qquad P \text{-a.s.}
\]
\end{proof}
For upper bounds of Lemma \ref{pop_lambda_as} we need to adjust the previous argument because 
unlike $(|N_t|)_{t \geq 0}$ the process $(|N_t^{\lambda t}|)_{t \geq 0}$ is not monotone. We first  establish the following result.
\begin{Proposition}
\label{lambda_sup}
For $\lambda > 0$ and $n \in \mathbb{N}\cup\{0\}$ we define the following set of particles:
\[
\hat{N}^{\lambda n}_n := \big\{ u \in N_{n+1} \ : \ \sup_{s \in [n, n+1]}X^u_s \geq \lambda n\big\}.
\]
Then 
\[
\limsup_{n \to \infty} \frac{1}{n} \log E |\hat{N}^{\lambda n}_n| \leq \Delta_\lambda.
\]
\end{Proposition}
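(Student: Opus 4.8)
The plan is to reduce the claim, via the Many-to-One Lemma~\ref{many_to_one_lemma} and the Markov property of Brownian motion, to a Laplace-type estimate for the spine, and then to read off the exponential rate from the joint law of $(\xi_n,\tilde L_n)$ already used for Proposition~\ref{prop_enx}. First I would apply \eqref{many_to_one_eq} with $f\big((x_s)_{s\in[0,n+1]}\big)=\mathbf 1_{\{\sup_{s\in[n,n+1]}x_s\ge\lambda n\}}$ to get
\[
E|\hat N^{\lambda n}_n|=\tilde E\Big[\mathbf 1_{\{\sup_{s\in[n,n+1]}\xi_s\ge\lambda n\}}\,\mathrm{e}^{\hat\beta_0\tilde L_{n+1}+\hat\beta(n+1)}\Big].
\]
Conditioning on $\mathcal G_n$ and using that, given $\xi_n=y$, the shifted path $(\xi_{n+s})_{s\ge0}$ is a Brownian motion started at $y$ with $\tilde L_{n+1}-\tilde L_n$ equal to its local time at $0$ over $[0,1]$, this identity becomes
\[
E|\hat N^{\lambda n}_n|=\mathrm{e}^{\hat\beta(n+1)}\,\tilde E\big[\mathrm{e}^{\hat\beta_0\tilde L_n}\,g_n(\xi_n)\big],\qquad
g_n(y):=E_y\big[\mathrm{e}^{\hat\beta_0 L_1}\mathbf 1_{\{\sup_{s\in[0,1]}B_s\ge\lambda n\}}\big],
\]
where $P_y,E_y$ denote the law of a Brownian motion $B$ started at $y$ with local time $L$ at $0$.

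Next I would establish the uniform bound $g_n(y)\le C\,\mathrm{e}^{-(\lambda n-y)_+^2/4}$ for all $y$, with $C=C(\hat\beta_0)<\infty$. By Cauchy--Schwarz, $g_n(y)\le P_y\big(\sup_{[0,1]}B\ge\lambda n\big)^{1/2}\,E_y\big[\mathrm{e}^{2\hat\beta_0 L_1}\big]^{1/2}$. The reflection principle gives $P_y\big(\sup_{[0,1]}B\ge\lambda n\big)=2\Phi\big(-(\lambda n-y)_+\big)\le\mathrm{e}^{-(\lambda n-y)_+^2/2}$, while a short coupling argument --- a Brownian motion started at $y$ must first hit $0$, leaving at most one unit of time to accumulate local time at $0$ --- gives $E_y[\mathrm{e}^{2\hat\beta_0 L_1}]\le E_0[\mathrm{e}^{2\hat\beta_0 L_1}]=:C^2$, the finiteness of which follows at once from the joint density of $(B_1,L_1)$ used in the computation of \eqref{eq_en2}. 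Consequently
\[
E|\hat N^{\lambda n}_n|\le C\,\mathrm{e}^{\hat\beta(n+1)}\,\tilde E\Big[\mathrm{e}^{\hat\beta_0\tilde L_n-(\lambda n-\xi_n)_+^2/4}\Big].
\]

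It then remains to show $\tilde E\big[\mathrm{e}^{\hat\beta_0\tilde L_n-(\lambda n-\xi_n)_+^2/4}\big]=\mathrm{e}^{(\Delta_\lambda-\hat\beta+o(1))n}$. For this I would insert the density $\tilde P(\xi_n\in\mathrm{d}y,\tilde L_n\in\mathrm{d}l)=\frac{|y|+l}{\sqrt{2\pi n^3}}\,\mathrm{e}^{-(|y|+l)^2/(2n)}\,\mathrm{d}y\,\mathrm{d}l$ and split the $y$-integral into $y<0$, $0\le y<\lambda n$ and $y\ge\lambda n$. The range $y<0$ contributes at most $\mathrm{e}^{-\lambda^2 n^2/4}\,\tilde E[\mathrm{e}^{\hat\beta_0\tilde L_n}]$, which is negligible; the range $y\ge\lambda n$, where the extra exponential factor is $\le1$, contributes at most $\tilde E[\mathrm{e}^{\hat\beta_0\tilde L_n}\mathbf 1_{\{\xi_n\ge\lambda n\}}]=\mathrm{e}^{-\hat\beta n}\,E|N^{\lambda n}_n|$, which is $\mathrm{e}^{(\Delta_\lambda-\hat\beta+o(1))n}$ by \eqref{enlt}--\eqref{log_e_lambda}. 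For the middle range I would substitute $u=\lambda n-y\in(0,\lambda n]$ and maximise the exponent $\hat\beta_0 l-\frac{(l+\lambda n-u)^2}{2n}-\frac{u^2}{4}$ first over $l\ge0$ (at the interior critical point $l=u+(\hat\beta_0-\lambda)n$ when that is nonnegative, and at the boundary $l=0$ otherwise) and then over $u$; a short computation shows the maximum is at most $n\,\Delta_\lambda^0+O(1)$, where $\Delta_\lambda^0:=\sup_{l\ge0}\big[\hat\beta_0 l-\tfrac12(l+\lambda)^2\big]$, and one checks directly that $\Delta_\lambda^0=\Delta_\lambda-\hat\beta$ (the two branches matching those of \eqref{delta_lambda}). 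Collecting the three ranges and multiplying back by $\mathrm{e}^{\hat\beta(n+1)}$ gives $\limsup_{n\to\infty}\tfrac1n\log E|\hat N^{\lambda n}_n|\le\hat\beta+\Delta_\lambda^0=\Delta_\lambda$.

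The step I expect to be the main obstacle is the middle range $0\le y<\lambda n$: the crucial point is that the optimisation over the local time variable $l$ must be performed under the constraint $l\ge0$, and it is precisely this constraint that produces the piecewise form of $\Delta_\lambda$ --- dropping it and integrating $l$ over all of $\mathbb R$ would yield the value $\tfrac12\hat\beta_0^2-\hat\beta_0\lambda$, which is too large (hence useless) in the regime $\lambda>\hat\beta_0$. The remaining ingredients --- the reflection-principle tail bound, the coupling bound on $E_y[\mathrm{e}^{2\hat\beta_0 L_1}]$, and the two easy ranges of $y$ --- are routine.
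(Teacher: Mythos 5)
Your argument is correct, but it takes a genuinely different route from the paper's. The paper also starts from the Many-to-One reduction to $\tilde E\big[\mathbf 1_{\{\sup_{s\in[n,n+1]}\xi_s\ge\lambda n\}}\mathrm e^{\hat\beta_0\tilde L_{n+1}+\hat\beta(n+1)}\big]$, but then writes $\sup_{s\in[n,n+1]}\xi_s=\xi_{n+1}+\bar\xi_n$ and splits on whether $|\xi_{n+1}|\le(\lambda-\delta)n$: the bad event forces $\bar\xi_n\ge\delta n$ and is killed by Cauchy--Schwarz against a super-exponentially small Gaussian tail, while the main term is bounded, after dropping the sup-indicator and using symmetry, by $2E|N_{n+1}^{(\lambda-\delta)n}|$, whose rate $\Delta_{\lambda-\delta}$ is already available from Proposition \ref{prop_enx}; letting $\delta\to0$ and using continuity and monotonicity of $\lambda\mapsto\Delta_\lambda$ finishes. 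You instead condition at time $n$, absorb the last unit of time into the uniform bound $g_n(y)\le C\,\mathrm e^{-(\lambda n-y)_+^2/4}$ (reflection principle plus the coupling bound $E_y[\mathrm e^{2\hat\beta_0L_1}]\le E_0[\mathrm e^{2\hat\beta_0L_1}]<\infty$), and then evaluate the remaining expectation directly by a Laplace-type optimisation against the joint density of $(\xi_n,\tilde L_n)$. This buys you a proof with no $\delta$-shift and no appeal to continuity of $\Delta_\lambda$, at the price of redoing by hand the constrained variational problem behind the two branches of $\Delta_\lambda$ --- and you correctly identify that keeping the constraint $l\ge0$ is exactly what yields $-\lambda^2/2$ rather than $\tfrac12\hat\beta_0^2-\hat\beta_0\lambda$ when $\lambda>\hat\beta_0$. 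The one step you should spell out when writing this up is the passage from ``maximum of the exponent over $l\ge0$, $0\le u\le\lambda n$'' to a bound on the integral itself: since the prefactor $(|y|+l)/\sqrt{2\pi n^3}$ is polynomial and, for each fixed $u$, the exponent is a concave quadratic in $l$, the integral over the half-line $l\ge0$ is at most a polynomial-in-$n$ factor times the exponential of the constrained maximum (either the interior critical point, if nonnegative, or $l=0$), so the exponential rate is unaffected; this is routine given the Gaussian structure but deserves a line. With that, your three-range estimate gives $\limsup_n\frac1n\log E|\hat N^{\lambda n}_n|\le\hat\beta+\Delta_\lambda^0=\Delta_\lambda$, as required.
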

Note that for any $t \in [n, n+1]$ it is always true that 
$|N_t^{\lambda t}| \leq |\hat{N}^{\lambda n}_n|$.
\begin{proof}
By the Many-to-One Lemma we have
\begin{align*}
E |\hat{N}^{\lambda n}_n| &= E \sum_{u \in N_{n+1}} 
\mathbf{1}_{\{\sup_{s \in [n, n+1]}X^u_s \geq \lambda n\}}\\
&= \tilde{E} \Big[ \mathbf{1}_{\{\sup_{s \in [n, n+1]}\xi_s \geq \lambda n\}} 
\mathrm{e}^{\hat{\beta}_0 \tilde{L}_{n+1} + \hat{\beta}(n+1)}\Big]\\
&= \tilde{E} \Big[ \mathbf{1}_{\{\xi_{n+1} + \bar{\xi}_n \geq \lambda n\}} 
\mathrm{e}^{\hat{\beta}_0 \tilde{L}_{n+1} + \hat{\beta}(n+1)}\Big],
\end{align*}
where $\bar{\xi}_n := \sup_{s \in [n, n+1]} (\xi_s - \xi_{n + 1})$ and 
$\bar{\xi}_n \stackrel{d}{=} \sup_{s \in [0,1]} \xi_s \stackrel{d}{=} |\mathcal{N}(0,1)|$
under $\tilde{P}$.

Then for any $\delta \in (0, \lambda)$ we can split the latter expectation as 
\begin{align*}
 \tilde{E} \Big[ \mathbf{1}_{\{\xi_{n+1} + \bar{\xi}_n \geq \lambda n\}} 
\mathrm{e}^{\hat{\beta}_0 \tilde{L}_{n+1} + \hat{\beta}(n+1)}\Big] &= 
 \tilde{E} \Big[ \mathbf{1}_{\{\xi_{n+1} + \bar{\xi}_n \geq \lambda n\}} 
\mathrm{e}^{\hat{\beta}_0 \tilde{L}_{n+1} + \hat{\beta}(n+1)} 
\mathbf{1}_{\{|\xi_{n+1}| \leq (\lambda - \delta)n\}}
\Big]\\
&+ \tilde{E} \Big[ \mathbf{1}_{\{\xi_{n+1} + \bar{\xi}_n \geq \lambda n\}} 
\mathrm{e}^{\hat{\beta}_0 \tilde{L}_{n+1} + \hat{\beta}(n+1)} 
\mathbf{1}_{\{|\xi_{n+1}| > (\lambda - \delta)n\}}
\Big]
\end{align*}
We shall refer to the first term in the sum as $I_1$ and the second one as $I_2$. First we show that 
the contribution of $I_1$ is negligibly small as it has a faster than exponential decay rate.
\begin{align*}
I_1 &= \tilde{E} \Big[ \mathbf{1}_{\{\xi_{n+1} + \bar{\xi}_n \geq \lambda n\}} 
\mathrm{e}^{\hat{\beta}_0 \tilde{L}_{n+1} + \hat{\beta}(n+1)} 
\mathbf{1}_{\{|\xi_{n+1}| \leq (\lambda - \delta)n\}}\Big]\\
&\leq \tilde{E} \Big[ \mathrm{e}^{\hat{\beta}_0 \tilde{L}_{n+1} + \hat{\beta}(n+1)} 
\mathbf{1}_{\{\bar{\xi}_n \geq \delta n\}} \Big]\\
&\leq \Big( \tilde{E} \Big[ \mathrm{e}^{2\hat{\beta}_0 \tilde{L}_{n+1} + 2\hat{\beta}(n+1)} \Big]
\Big)^{\frac{1}{2}} \Big( \tilde{P} \big( \bar{\xi}_n \geq \delta n \big) \Big)^{\frac{1}{2}}
\end{align*}
using Cauchy-Schwarz inequality in the last line.Then as we know from \eqref{eq_en2}
\[
\frac{1}{n} \log
\Big( \tilde{E} \Big[ \mathrm{e}^{2\hat{\beta}_0 \tilde{L}_{n+1} + 2\hat{\beta}(n+1)} \Big]
\Big)^{\frac{1}{2}} \to \hat{\beta}_0^2 + \hat{\beta} \qquad \text{ as }n \to \infty
\]
while, since $\bar{\xi}_n \stackrel{d}{=} |\mathcal{N}(0,1)|$,
\[
\frac{1}{n^2} \log \Big( \tilde{P} \big( \bar{\xi}_n \geq \delta n \big) \Big)^{\frac{1}{2}} \to 
\frac{-\delta^2}{4} \qquad \text{ as } n \to \infty.
\]
Thus 
\begin{equation}
\label{i1}
\limsup_{n \to \infty} \frac{1}{n^2} \log I_1 \leq - \frac{\delta^2}{4}.
\end{equation}
On the other hand,
\begin{align*}
I_2 &= \tilde{E} \Big[ \mathbf{1}_{\{\xi_{n+1} + \bar{\xi}_n \geq \lambda n\}} 
\mathrm{e}^{\hat{\beta}_0 \tilde{L}_{n+1} + \hat{\beta}(n+1)} 
\mathbf{1}_{\{|\xi_{n+1}| > (\lambda - \delta)n\}}
\Big]\\
&\leq \tilde{E} \Big[ \mathrm{e}^{\hat{\beta}_0 \tilde{L}_{n+1} + \hat{\beta}(n+1)} 
\mathbf{1}_{\{|\xi_{n+1}| > (\lambda - \delta)n\}}\Big]\\
&= 2\tilde{E} \Big[ \mathrm{e}^{\hat{\beta}_0 \tilde{L}_{n+1} + \hat{\beta}(n+1)} 
\mathbf{1}_{\{\xi_{n+1} > (\lambda - \delta)n\}}\Big]\\
&= 2E|N_{n+1}^{(\lambda - \delta)n}|
\end{align*} 
using symmetry in the third line and identity \eqref{eq_enx1} in the fourth line. Thus from 
\eqref{eq_enx} we can see (just as we did in \eqref{enlt} - \eqref{delta_lambda}) that
\begin{equation}
\label{i2}
\limsup_{n \to \infty} \frac{1}{n} \log I_2 \leq \Delta_{\lambda - \delta}.
\end{equation}
From \eqref{i1} and \eqref{i2} we have that for any $\delta \in (0, \lambda)$
\[
\limsup_{n \to \infty} \frac{1}{n} \log E |\hat{N}^{\lambda n}_n| 
= \limsup_{n \to \infty} \frac{1}{n} \log(I_1 + I_2) \leq \Delta_{(\lambda-\delta)}.
\]
Letting $\delta \to 0$ and using continuity and monotonicity of $\Delta_{\lambda}$ as a function of $\lambda$ we obtain the sought result.
\end{proof}

Proposition \ref{lambda_sup} can now be applied to prove the upper bounds for Lemma \ref{pop_lambda_as}.
\begin{Proposition}[Upper bounds for Lemma \ref{pop_lambda_as}]
\label{upper_pop_lambda}$ $

If $\lambda < \lambda_{crit}$ ($\Delta_\lambda > 0$) then 
\begin{equation}
\label{subcrit_upper}
\limsup_{t \to \infty} \frac{1}{t} \log |N_t^{\lambda t}| \leq \Delta_\lambda 
\qquad P \text{-a.s.}
\end{equation}

If $\lambda > \lambda_{crit}$ ($\Delta_\lambda < 0$) then
\begin{equation}
\label{supercrit1_upper}
\lim_{t \to \infty} |N_t^{\lambda t}| = 0 \qquad P \text{-a.s.}
\end{equation}
and
\begin{equation}
\label{supercrit2_upper}
\limsup_{t \to \infty} \frac{1}{t} \log P (|N_t^{\lambda t}| > 0) \leq \Delta_{\lambda}  \text{.}
\end{equation}
\end{Proposition}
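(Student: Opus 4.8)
The plan is to transfer the expectation estimate of Proposition~\ref{lambda_sup} into an almost-sure (respectively large-deviation) statement by the standard combination of Markov's inequality with the Borel--Cantelli lemma along integer times, and then to bridge the intermediate times $t\in[n,n+1]$ using the deterministic domination $|N_t^{\lambda t}|\le|\hat N^{\lambda n}_n|$ recorded after Proposition~\ref{lambda_sup}. The whole point of passing through the ``sup'' set $\hat N^{\lambda n}_n$ is that, although $(|N_t^{\lambda t}|)_{t\ge0}$ is not monotone, the quantity $|\hat N^{\lambda n}_n|$ dominates $|N_t^{\lambda t}|$ uniformly over the entire window $[n,n+1]$, which is exactly what a Borel--Cantelli argument over the integers needs.

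\textbf{Subcritical regime} ($\lambda<\lambda_{crit}$, i.e.\ $\Delta_\lambda>0$). I would fix $\eps>0$. By Markov's inequality and Proposition~\ref{lambda_sup}, for all $n$ large enough, $P\big(|\hat N^{\lambda n}_n|>\mathrm e^{(\Delta_\lambda+\eps)n}\big)\le \mathrm e^{-(\Delta_\lambda+\eps)n}\,E|\hat N^{\lambda n}_n|\le \mathrm e^{-\eps n/2}$, which is summable; Borel--Cantelli then gives that $P$-a.s.\ $|\hat N^{\lambda n}_n|\le\mathrm e^{(\Delta_\lambda+\eps)n}$ for all but finitely many $n$. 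For $t\in[n,n+1]$ this yields $|N_t^{\lambda t}|\le|\hat N^{\lambda n}_n|\le\mathrm e^{(\Delta_\lambda+\eps)n}\le\mathrm e^{(\Delta_\lambda+\eps)t}$, where the last step uses $\Delta_\lambda+\eps>0$ together with $n\le t$. Hence $\limsup_{t\to\infty}\frac1t\log|N_t^{\lambda t}|\le\Delta_\lambda+\eps$ $P$-a.s., and letting $\eps\downarrow0$ establishes \eqref{subcrit_upper}.

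\textbf{Supercritical regime} ($\lambda>\lambda_{crit}$, i.e.\ $\Delta_\lambda<0$). For \eqref{supercrit1_upper} I would pick $\eps\in(0,-\Delta_\lambda)$. Since $\hat N^{\lambda n}_n$ takes values in $\mathbb{N}\cup\{0\}$, Markov's inequality gives $P\big(|\hat N^{\lambda n}_n|\ge1\big)\le E|\hat N^{\lambda n}_n|\le\mathrm e^{(\Delta_\lambda+\eps)n}$ for $n$ large, again summable; Borel--Cantelli gives that $P$-a.s.\ $|\hat N^{\lambda n}_n|=0$ for all $n$ large, and the domination $|N_t^{\lambda t}|\le|\hat N^{\lambda n}_n|$ on $[n,n+1]$ forces $|N_t^{\lambda t}|=0$ for all $t$ large, which is \eqref{supercrit1_upper}. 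For \eqref{supercrit2_upper} no Borel--Cantelli is needed: since $|N_t^{\lambda t}|$ is integer-valued, $P\big(|N_t^{\lambda t}|>0\big)=P\big(|N_t^{\lambda t}|\ge1\big)\le E|N_t^{\lambda t}|$, and \eqref{log_e_lambda} gives $\frac1t\log E|N_t^{\lambda t}|\to\Delta_\lambda$, whence $\limsup_{t\to\infty}\frac1t\log P\big(|N_t^{\lambda t}|>0\big)\le\Delta_\lambda$.

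I do not expect a genuine obstacle here, since the analytic content — handling the window $[n,n+1]$ and discarding the faster-than-exponentially-small term $I_1$ — has already been absorbed into Proposition~\ref{lambda_sup}. The only points that will require a little care are: choosing $\eps$ so that $\Delta_\lambda+\eps$ has the right sign in each regime (positive in the subcritical case, so that $\mathrm e^{(\Delta_\lambda+\eps)n}\le\mathrm e^{(\Delta_\lambda+\eps)t}$ for $n\le t$; negative in the supercritical case, so that the Borel--Cantelli series converges); and, in the supercritical case, applying the expectation bound of Proposition~\ref{lambda_sup} to $\hat N^{\lambda n}_n$ rather than to $N_n^{\lambda n}$, since it is the former that controls the whole interval $[n,n+1]$ and hence whether the process stays at $0$.
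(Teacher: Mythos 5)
Your proposal is correct and follows essentially the same route as the paper: Markov's inequality plus Borel--Cantelli applied to $E|\hat N^{\lambda n}_n|$ from Proposition~\ref{lambda_sup}, the deterministic domination $|N_t^{\lambda t}|\le|\hat N^{\lambda n}_n|$ on $[n,n+1]$, and a direct Markov bound via \eqref{log_e_lambda} for \eqref{supercrit2_upper}. The only (immaterial) difference is that for \eqref{supercrit1_upper} you apply Markov at level $1$, whereas the paper keeps the threshold $\mathrm{e}^{(\Delta_\lambda+\eps)n}$ with $\eps$ small enough that $\Delta_\lambda+\eps<0$ and then uses integer-valuedness.
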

\begin{proof}
For any $\lambda >0$ let $\hat{N}_n^{\lambda n}$ be as in the previous proposition and fix $\epsilon > 0$. Then the Markov inequality gives 
\[
P \Big( |\hat{N}_n^{\lambda n}| > \mathrm{e}^{(\Delta_\lambda + \epsilon)n} \Big) \leq 
\mathrm{e}^{-(\Delta_\lambda + \epsilon)n} E |\hat{N}_n^{\lambda n}|
\]
and from Proposition \ref{lambda_sup} the right hand side decays exponentially fast. Therefore by Borel-Cantelli Lemma
\[
P \Big( \Big\{|\hat{N}_n^{\lambda n}| > \mathrm{e}^{(\Delta_\lambda + \epsilon)n} \Big\} \text{ i.o.}\Big) = 0.
\]
This is equivalent to saying that
\[
|\hat{N}_n^{\lambda n}| \leq \mathrm{e}^{(\Delta_\lambda + \epsilon)n} \text{ eventually } P \text{-a.s.}
\]
So $P$-almost surely for all $t$ sufficiently large
\[
|N_t^{\lambda t}| \leq |\hat{N}_{\lfloor t \rfloor}^{\lambda \lfloor t \rfloor}| \leq 
\mathrm{e}^{(\Delta_\lambda + \epsilon)\lfloor t \rfloor}
\]
Then if $\lambda > \lambda_{crit}$ we can take $\epsilon$ sufficiently small so that 
$\Delta_\lambda + \epsilon < 0$ and hence $|N_t^{\lambda t}| < 1$ for $t$ large enough 
thus proving \eqref{supercrit1_upper}. 

If $\lambda < \lambda_{crit}$ then we get
\[
\limsup_{t \to \infty} \frac{1}{t} \log |N_t^{\lambda t}| \leq \Delta_\lambda + \epsilon
\qquad P \text{-a.s.}
\]
and letting $\epsilon \to 0$ yields \eqref{subcrit_upper}.

Finally, if $\lambda > \lambda_{crit}$ then \eqref{supercrit2_upper} follows from Markov inequality and \eqref{log_e_lambda}:
\[
\limsup_{t \to \infty} \frac{1}{t} \log P (|N_t^{\lambda t}| > 0) \leq 
\limsup_{t \to \infty} \frac{1}{t} \log E|N_t^{\lambda t}| = \Delta_{\lambda}.
\]
\end{proof}
\begin{Remark}
Note that the $X \log X$ condition on the offspring distribution was not required so far. It will be essential in the next subsection.
\end{Remark}

\subsection{Additive martingales and applications}
Recall that under the probability $\tilde{P}$ the branching process together with the spine may be described as follows:
\begin{itemize}
\item The process starts with a single spine particle whose path $(\xi_t)_{t \geq 0}$ is distributed like a Brownian motion.
\item At instantaneous rate $\beta_0 \delta_0(\cdot) + \beta$ along its path the spine  particle splits into $A(\cdot)$ particles. If splitting took place at position $x$ then
\begin{equation*}
\tilde{P}(A(x) = n) = \left\{
\begin{array}{rl}
q_n& \text{if } x = 0, \\
p_n& \text{if } x \neq 0. 
\end{array} \right.
\end{equation*}
\item Uniformly at random one of the new particles is selected to continue the spine and thus to stochastically repeat the behaviour of the initial particle starting from $x$.
\item The remaining $A(x) - 1$ particles initiate independent copies of a branching process with branching rate $\beta_0 \delta_0(\cdot) + \beta$ and offsprng distribution $A(\cdot)$ as under $P^x$.
\end{itemize}
We shall now describe a family of martingale changes of measure that will put a certain bias on the motion of the spine particle as well as the birth rate and the offspring distribution along the path of the spine particle. Again, for a detailed discussion the reader is referred to \cite{HH09}.

Let us consider a process of the form 
\begin{align}
\label{m_tilde}
\tilde{M}_t &= \Big[ \prod_{n=1}^{n_t} A_{n} \Big]  \mathrm{e}^{- \hat{\beta}t - \hat{\beta}_0 \tilde{L}_t} \tilde{M}^{(1)}_t \nonumber\\
&= \Big[ \prod_{n=1}^{n_t'} A_{n}' \times \prod_{n=1}^{n_t^0} A_{n}^0 \Big] \mathrm{e}^{- \hat{\beta}t - \hat{\beta}_0 \tilde{L}_t} \tilde{M}^{(1)}_t,
\end{align}
where $(A_n)_{n \geq 1}$, $(A^0_n)_{n \geq 1}$, $(A_n')_{n \geq 1}$, $(n_t)_{t \geq 0}$, $(n^0_t)_{t \geq 0}$ and $(n_t')_{t \geq 0}$ were defined in Subsection 2.1 and $(\tilde{M}^{(1)})_{t \geq 0}$ is a non-negative $\tilde{P}$-martingale of mean $1$ with respect to the filtration $(\mathcal{G}_t)_{t \geq 0}$.  The effect of $\tilde{M}^{(1)}$ if used as a change of measure martingale is to put some drift on $(\xi_t)_{t \geq 0}$. 

For this particular paper we shall take $\tilde{M}^{(1)}$ to be either 
\[
\tilde{M}^{(1)}_t = \mathrm{e}^{\lambda \xi_t - \frac{\lambda^2}{2}t} \text{ , } \qquad t \geq 0
\]
or
\begin{align*}
\tilde{M}^{(1)}_t &= \mathrm{e}^{\lambda |\xi_t| - \lambda \tilde{L}_t - \frac{\lambda^2}{2}t}\\
&= \mathrm{e}^{\lambda \int_0^t sgn(\xi_s) \mathrm{d}\xi_s - \frac{\lambda^2}{2}t}
\text{ , } \qquad t \geq 0.
\end{align*}
The first choice is the classical Girsanov martingle which has the effect of adding constant drift $\lambda$ to $(\xi_t)_{t \geq 0}$. The second choice has the effect of adding instataneous drift $\lambda sgn(\cdot)$ to $(\xi_t)_{t \geq 0}$ so that if $\lambda < 0$ then this is drift of magnitude $|\lambda|$ towards the origin whereas if $\lambda > 0$ then this is drift of magnitude $\lambda$ away from the origin.

Note that we can decompose $(\tilde{M})_{t \geq 0}$ into a product of three martingales:
\[
\tilde{M}_t = \tilde{M}^{(1)}_t \tilde{M}^{(2)}_t \tilde{M}^{(3)}_t
\]
where 
\begin{equation}
\label{m2}
\tilde{M}^{(2)}_t = m^{n_t'} \mathrm{e}^{- \hat{\beta}t} \times 
m_0^{n_t^0} \mathrm{e}^{- \hat{\beta}_0 \tilde{L}_t}
\end{equation}
and
\begin{equation}
\label{m3}
\tilde{M}^{(3)}_t = \prod_{n=1}^{n_t'} \frac{A_{n}'}{m} \times \prod_{n=1}^{n_t^0} \frac{A_{n}^0}{m_0}.
\end{equation}
When used as Radon-Nikodym derivative, $(\tilde{M}^{(2)}_t)_{t \geq 0}$ has the effect of changing the instantaneous jump rate of $(n_t')_{t \geq 0}$ from $\beta$ to $m \beta$ and the jump rate of $(n_t^0)_{t \geq 0}$ from $\beta_0 \delta_0(\cdot)$ to $m_0 \beta_0 \delta_0(\cdot)$. The effect of $(\tilde{M}^{(3)}_t)_{t \geq 0}$ is to change the distribution of random variables $(A_n')_{n \geq 1}$ from $(p_k)_{k \geq 1}$ to $(\frac{k}{m} p_k)_{k \geq 1}$ and the distribution of random variables $(A_n^0)_{n \geq 1}$ from $(q_k)_{k \geq 1}$ to $(\frac{k}{m_0} q_k)_{k \geq 1}$ (while keeping them all independent).
If we now define a new probability measure $\tilde{Q}$ as
\begin{equation}
\frac{\mathrm{d} \tilde{Q}}{\mathrm{d} \tilde{P}} \Big\vert_{\tilde{\mathcal{F}}_t} = 
\tilde{M}_t \text{ , } \qquad t \geq 0 
\end{equation}
then one can check that the effects of $\tilde{M}^{(1)}$, $\tilde{M}^{(2)}$ and $\tilde{M}^{(3)}$ superimpose so that under $\tilde{Q}$ the branching process has the following description.
\begin{itemize}
\item The process starts with a single spine particle whose path $(\xi_t)_{t \geq 0}$ is distributed like a Brownian motion with drift imposed by $\tilde{M}^{(1)}$.
\item At instantaneous rate $m_0 \beta_0 \delta_0(\cdot) + m \beta$ along its path the spine  particle splits into $A(\cdot)$ particles. If splitting took place at position $x$ then
\begin{equation*}
\tilde{Q}(A(x) = n) = \left\{
\begin{array}{rl}
\frac{n}{m_0}q_n& \text{if } x = 0, \\
\frac{n}{m}p_n& \text{if } x \neq 0. 
\end{array} \right.
\end{equation*}
\item Uniformly at random one of the new particles is selected to continue the spine and thus to stochastically repeat the behaviour of the initial particle starting from $x$.
\item The remaining $A(x) - 1$ particles initiate independent unbiased copies of a branching process with branching rate $\beta_0 \delta_0(\cdot) + \beta$ and offspring distribution $A(\cdot)$ as under $P^x$.
\end{itemize}
Suppose now that for all $t \geq 0$ $\tilde{M}^{(1)}_t$ can be represented as 
\[
\tilde{M}^{(1)}_t = \sum_{u \in N_t} M^{(1)}_t(u) \mathbf{1}_{\{ node_t(\xi) = u \}},
\]
where for all $u \in N_t$, $M^{(1)}_t(u)$ is $\mathcal{F}_t$-measurable. E.g. if $\tilde{M}^{(1)}_t = \mathrm{e}^{\lambda |\xi_t| - \lambda \tilde{L}_t - \frac{\lambda^2}{2}t}$ then we get the required representation by taking $M^{(1)}_t(u) = \mathrm{e}^{\lambda |X^u_t| - \lambda L^u_t - \frac{\lambda^2}{2}t}$. If we define 
\[
M_t := \sum_{u \in N_t} M^{(1)}_t(u) \mathrm{e}^{- \hat{\beta}_0 L_t^u - \hat{\beta}t} 
\qquad \text{ , } t \geq 0
\]
it can be checked that $(M_t)_{t \geq 0}$ is a unit-mean $P$-martingale such that 
\[
M_t = \tilde{E} \Big( \tilde{M}_t \big\vert \mathcal{F}_t\Big)
\]
and that if $Q := \tilde{Q} \big\vert_{\mathcal{F}_\infty}$ then 
\[
\frac{\mathrm{d}Q}{\mathrm{d}P} \Big\vert_{\mathcal{F}_t} = M_t \text{ , } \qquad t \geq 0.
\]

Since $(M_t)_{t \geq 0}$ is always a non-negative $P$-martingale, it must converge $P$-almost surely to some non-negative limit $M_\infty$. Particularly interesting are those martingales whose limit is not almost surely $0$.

When investigating whether the limit of $M$ is $P$-almost surely $0$ or not we shall make use of the following result commonly known as the spine decomposition.

\begin{Lemma}[Spine decomposition]
\label{spine_decomp}
Let $(M_t)_{t \geq 0}$ be as above. Then
\begin{equation}
\label{eq_spine_decomp}
\tilde{Q} \Big( M_t \big\vert \tilde{\mathcal{G}}_\infty \Big) = 
spine(t) + \sum_{n=1}^{n_t} (A_n - 1) spine(S_n),
\end{equation}
where 
\[
spine(t) = \tilde{M}^{(1)}_t \mathrm{e}^{- \hat{\beta}t - \hat{\beta}_0 \tilde{L}_t}. 
\]
\end{Lemma}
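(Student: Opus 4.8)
The plan is to expand $M_t$ as a sum over the particles alive at time $t$, group the summands according to how each particle sits relative to the spine, and then integrate out everything that happens off the spine. First I would recall that
\[ M_t = \sum_{u \in N_t} M^{(1)}_t(u)\, \mathrm{e}^{-\hat{\beta}_0 L^u_t - \hat{\beta} t}, \]
and note that every $u \in N_t$ is either the current spine particle $node_t(\xi)$ or lies in exactly one of the subtrees that branched off the spine at some spine fission time $S_n \le t$; at the $n$-th such fission there are $A_n - 1$ off-spine subtrees, which I label $\mathcal{T}_{n,1}, \dots, \mathcal{T}_{n, A_n - 1}$. Splitting the sum accordingly,
\begin{align*}
M_t &= M^{(1)}_t\big(node_t(\xi)\big)\, \mathrm{e}^{-\hat{\beta}_0 \tilde{L}_t - \hat{\beta} t} \\
&\quad + \sum_{n=1}^{n_t} \sum_{j=1}^{A_n - 1} \sum_{u \in N_t \cap \mathcal{T}_{n,j}} M^{(1)}_t(u)\, \mathrm{e}^{-\hat{\beta}_0 L^u_t - \hat{\beta} t}.
\end{align*}
By the assumed representation of $\tilde{M}^{(1)}$ the first term equals $\tilde{M}^{(1)}_t\,\mathrm{e}^{-\hat{\beta}_0 \tilde{L}_t - \hat{\beta} t} = spine(t)$, which is $\tilde{\mathcal{G}}_t$-measurable, hence $\tilde{\mathcal{G}}_\infty$-measurable.

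The heart of the argument is a factorisation of each inner sum. Fixing a subtree $\mathcal{T}_{n,j}$, which is created at time $S_n$ at the spine location $\xi_{S_n}$, I would use that for $u \in \mathcal{T}_{n,j}$ the path $(X^u_s)_{s \le t}$ coincides with the spine path on $[0, S_n]$ and then evolves inside $\mathcal{T}_{n,j}$, so that $L^u_t = \tilde{L}_{S_n}$ plus the local time at $0$ accumulated in $\mathcal{T}_{n,j}$ after $S_n$. Feeding this into the explicit multiplicative form of $\tilde{M}^{(1)}$ — the computation is the same for $\mathrm{e}^{\lambda \xi_t - \frac{\lambda^2}{2} t}$ and for $\mathrm{e}^{\lambda |\xi_t| - \lambda \tilde{L}_t - \frac{\lambda^2}{2} t}$, since in both cases the increment of $\tilde{M}^{(1)}$ over $[S_n, t]$ splits off as a product — one checks that
\[ \sum_{u \in N_t \cap \mathcal{T}_{n,j}} M^{(1)}_t(u)\, \mathrm{e}^{-\hat{\beta}_0 L^u_t - \hat{\beta} t} = spine(S_n)\, \frac{M^{(n,j)}_{t - S_n}}{M^{(n,j)}_0}, \]
where $M^{(n,j)}$ is the copy of the $P$-martingale $M$ built from the sub-branching-process $\mathcal{T}_{n,j}$ started from $\xi_{S_n}$, so that $M^{(n,j)}_0$ is the constant ($\mathrm{e}^{\lambda|\xi_{S_n}|}$, resp.\ $\mathrm{e}^{\lambda \xi_{S_n}}$) that depends only on $\xi_{S_n}$.

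Finally I would take $\tilde{Q}(\,\cdot\mid\tilde{\mathcal{G}}_\infty)$ of both sides. Under $\tilde{Q}$, conditionally on $\tilde{\mathcal{G}}_\infty$ — which records the spine path, the fission times $S_n$ and the family sizes $A_n$, but nothing off the spine — the subtrees $\mathcal{T}_{n,j}$ are independent and each is an \emph{unbiased} branching process distributed as under $P^{\xi_{S_n}}$; since $M^{(n,j)}$ is then a $P^{\xi_{S_n}}$-martingale and $S_n, \xi_{S_n}$ are $\tilde{\mathcal{G}}_\infty$-measurable, we get $\tilde{Q}\big(M^{(n,j)}_{t-S_n} \mid \tilde{\mathcal{G}}_\infty\big) = M^{(n,j)}_0$, so each inner sum has $\tilde{\mathcal{G}}_\infty$-conditional $\tilde{Q}$-mean $spine(S_n)$. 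All summands are non-negative, so Tonelli's theorem lets me exchange the (countable) sums with the conditional expectation, and collecting the $A_n-1$ identical contributions at each fission together with the spine term $spine(t)$ yields exactly \eqref{eq_spine_decomp}.

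The step I expect to be the main obstacle is the factorisation display: one has to check carefully that after peeling off $spine(S_n)$ the remaining factor is genuinely the off-spine subtree's own copy of the $M$-martingale, run for the elapsed time $t - S_n$ and started from the correct spatial point $\xi_{S_n}$, with no residual $\tilde{\mathcal{G}}_\infty$-measurable weight beyond $spine(S_n)$ itself. A secondary point that should be made explicit is the conditional independence used at the end — that knowing the \emph{entire} future of the spine (i.e.\ $\tilde{\mathcal{G}}_\infty$, not merely $\tilde{\mathcal{G}}_t$) does not change the law of the subtrees born before time $t$ — which follows at once from the $\tilde{Q}$-description of the process given above.
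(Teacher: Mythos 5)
Your argument is correct and is precisely the standard spine-decomposition proof: split $M_t$ into the spine term plus the contributions of the subtrees immigrating at the fission times $S_n$, use the multiplicative structure of $\tilde{M}^{(1)}$ (and of $\mathrm{e}^{-\hat{\beta}_0 L^u_t - \hat{\beta}t}$) to peel off the factor $spine(S_n)$, and then use that under $\tilde{Q}$, conditionally on $\tilde{\mathcal{G}}_\infty$, each of the $A_n-1$ subtrees is an independent unbiased copy of the branching process under $P^{\xi_{S_n}}$, so the renormalised additive martingale of each subtree has conditional mean one. The paper gives no proof of this lemma, referring instead to \cite{HH09}, where essentially this same computation is carried out, so your route coincides with the intended one.
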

Note that we have used $\tilde{Q}$ to denote the expectation corresponding to probability measure $\tilde{Q}$ which is a common practice.

We also recall from standard measure theory that if $A \in \mathcal{F}_t$ for some $t \geq 0$ then
\[
Q(A) = \int_A M_t \mathrm{d}P
\]
whereas if $A \in \mathcal{F}_\infty$ then
\begin{equation}
\label{durrett}
Q(A) = \int_A M_\infty \mathrm{d}P + Q \big( A \cap \{ \limsup_{t \to \infty} M_t = \infty\} \big).
\end{equation}
The latter identity can be found in \cite{Durrett}, p.241.
\begin{Proposition}
\label{M_pm}
Let
\begin{equation}
\label{eq_M_pm}
M_t^\pm := \sum_{u \in N_t} \mathrm{e}^{- \hat{\beta}_0 |X^u_t| - \frac{1}{2} \hat{\beta}_0^2 t - \hat{\beta} t} \text{ , } \qquad t \geq 0
\end{equation}
be the $P$-martingale derived through the procedure described above by taking
\begin{equation}
\label{M_one}
\tilde{M}^{(1)}_t = \mathrm{e}^{- \hat{\beta}_0 |\xi_t| + \hat{\beta}_0 \tilde{L}_t - \frac{1}{2} \hat{\beta}_0^2 t} \qquad \text{ , } t \geq 0.
\end{equation}
a) If condition \eqref{xlogx} is satsfied (the $X \log X$ condition on the offspring distribution) then
\[
M^\pm_\infty > 0 \qquad P \text{-a.s.}
\]
b) If condition \eqref{xlogx} is not satsfied then
\[
M^\pm_\infty = 0 \qquad P \text{-a.s.}
\]
\end{Proposition}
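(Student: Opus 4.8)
The plan is to run the standard spine / $L\log L$ argument based on the decomposition in Lemma~\ref{spine_decomp}. First observe that with $\tilde{M}^{(1)}_t = \mathrm{e}^{-\hat{\beta}_0|\xi_t| + \hat{\beta}_0\tilde{L}_t - \frac{1}{2}\hat{\beta}_0^2 t}$ (the $\lambda = -\hat{\beta}_0$ member of the second family of spine martingales described above) the representation $M^{(1)}_t(u) = \mathrm{e}^{-\hat{\beta}_0|X^u_t| + \hat{\beta}_0 L^u_t - \frac{1}{2}\hat{\beta}_0^2 t}$ is valid and yields $M_t = \sum_{u\in N_t} M^{(1)}_t(u)\,\mathrm{e}^{-\hat{\beta}_0 L^u_t - \hat{\beta} t} = \sum_{u\in N_t}\mathrm{e}^{-\hat{\beta}_0|X^u_t| - \frac{1}{2}\hat{\beta}_0^2 t - \hat{\beta} t} = M^\pm_t$, so $M^\pm$ is precisely of the form to which the change of measure $\tilde{Q}$ and Lemma~\ref{spine_decomp} apply, with spine term $spine(t) = \tilde{M}^{(1)}_t\mathrm{e}^{-\hat{\beta} t - \hat{\beta}_0\tilde{L}_t} = \mathrm{e}^{-\hat{\beta}_0|\xi_t| - (\frac{1}{2}\hat{\beta}_0^2 + \hat{\beta})t}$. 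By the usual measure-theoretic consequences of the spine decomposition (see \cite{HH09} and the argument around \eqref{durrett}) it is enough to prove: (i) under \eqref{xlogx}, $\liminf_{t\to\infty}\tilde{Q}\big(M^\pm_t\,\big|\,\tilde{\mathcal{G}}_\infty\big) < \infty$ $\tilde{Q}$-a.s.\ (in fact I will show the $\sup$ is finite), which gives $M^\pm_\infty > 0$ $P$-a.s.; and (ii) when \eqref{xlogx} fails, $\tilde{Q}\big(M^\pm_t\,\big|\,\tilde{\mathcal{G}}_\infty\big)\to\infty$ $\tilde{Q}$-a.s., which forces $M^\pm_\infty = 0$ $P$-a.s.

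The second ingredient is the behaviour of the spine under $\tilde{Q}$. By the description of $\tilde{M}^{(1)}$ above, under $\tilde{Q}$ the spine $(\xi_t)$ is a Brownian motion with instantaneous drift $-\hat{\beta}_0\,\mathrm{sgn}(\xi_t)$, i.e.\ of magnitude $\hat{\beta}_0$ towards the origin, so $(|\xi_t|)$ is a positive recurrent reflected Brownian motion with drift $-\hat{\beta}_0$. From its Tanaka decomposition $|\xi_t| = |\xi_0| - \hat{\beta}_0 t + W_t + \tilde{L}_t$, with $W$ a $\tilde{Q}$-Brownian motion, together with the ergodic theorem for this diffusion, one gets $\tilde{Q}$-a.s.\ $\xi_t/t \to 0$ and $\tilde{L}_t/t \to \hat{\beta}_0$. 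Since homogeneous fissions along the spine occur at constant rate $m\beta$ and catalytic fissions occur as a rate-$m_0\beta_0$ Poisson process run on the clock $\tilde{L}$, it follows that $\tilde{Q}$-a.s.\ $S_n'/n \to 1/(m\beta)$ and $S_n^0/n \to 1/(m_0\beta_0\hat{\beta}_0)$; moreover $\xi_{S_n^0} = 0$, so $spine(S_n^0) = \mathrm{e}^{-(\frac{1}{2}\hat{\beta}_0^2 + \hat{\beta})S_n^0}$ and $spine(S_n') \leq \mathrm{e}^{-(\frac{1}{2}\hat{\beta}_0^2 + \hat{\beta})S_n'}$. Finally, under $\tilde{Q}$ the offspring numbers along the spine form two independent i.i.d.\ families $(A_n')$ with law $\tilde{Q}(A_1' = k) = kp_k/m$ and $(A_n^0)$ with law $\tilde{Q}(A_1^0 = k) = kq_k/m_0$, whose $\tilde{Q}$-expected logarithms are $\tfrac{1}{m}\sum_{k\geq1} kp_k\log k$ and $\tfrac{1}{m_0}\sum_{k\geq1} kq_k\log k$ — finite if and only if \eqref{xlogx} holds.

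For (i), assume \eqref{xlogx} and split $\sum_{n=1}^{n_t}(A_n - 1)spine(S_n) = \sum_n (A_n' - 1)spine(S_n') + \sum_n (A_n^0 - 1)spine(S_n^0)$. Since the $\tilde{Q}$-mean of $\log^+ A_1'$ is finite, the first Borel--Cantelli lemma shows that for any fixed $\eps > 0$ one has $\tilde{Q}$-a.s.\ $A_n' \leq \mathrm{e}^{\eps n}$ eventually; together with $spine(S_n') \leq \mathrm{e}^{-(\frac{1}{2}\hat{\beta}_0^2 + \hat{\beta})S_n'}$ and $S_n' \geq (1-o(1))n/(m\beta)$ the first sum is eventually dominated by a convergent geometric series once $\eps$ is small enough, and the catalytic sum is controlled the same way using $\xi_{S_n^0} = 0$. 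With $spine(t)\to 0$ this gives $\sup_t\tilde{Q}(M^\pm_t\,|\,\tilde{\mathcal{G}}_\infty) < \infty$ $\tilde{Q}$-a.s. For (ii), suppose without loss of generality that $\sum_n p_n n\log n = \infty$ (the case $\sum_n q_n n\log n = \infty$ is identical with $S_n^0, A_n^0$ and is even cleaner since $\xi_{S_n^0}=0$). Then the $\tilde{Q}$-mean of $\log^+ A_1'$ is infinite, so $\sum_n\tilde{Q}(\log^+ A_n' \geq cn) = \infty$ for every $c > 0$; as the $A_n'$ are independent, the second Borel--Cantelli lemma gives $\tilde{Q}$-a.s.\ $A_n' \geq \mathrm{e}^{cn}$ for infinitely many $n$, and for such $n$, using $S_n' = O(n)$ and $|\xi_{S_n'}| = o(n)$, $(A_n' - 1)spine(S_n') \geq (\mathrm{e}^{cn}-1)\,\mathrm{e}^{-\hat{\beta}_0|\xi_{S_n'}| - (\frac{1}{2}\hat{\beta}_0^2 + \hat{\beta})S_n'} = \mathrm{e}^{\,cn - o(n)}$, which tends to $\infty$ once $c$ is taken large enough. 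Since every term of $\sum_{n=1}^{n_t}(A_n - 1)spine(S_n)$ is non-negative and infinitely many tend to $\infty$, the sum diverges, hence $\tilde{Q}(M^\pm_t\,|\,\tilde{\mathcal{G}}_\infty)\to\infty$ $\tilde{Q}$-a.s.

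The step I expect to be the main obstacle is the careful verification of the $\tilde{Q}$-a.s.\ sample-path statements for the spine — in particular $\tilde{L}_t/t \to \hat{\beta}_0$ and the asymptotically linear growth, with the correct constants, of the fission times $S_n'$ and $S_n^0$ — and the precise invocation of the measure-theoretic dichotomy that converts the dichotomy for $\tilde{Q}(M^\pm_t\,|\,\tilde{\mathcal{G}}_\infty)$ into the one for $M^\pm_\infty$; the positivity assertion in part~(a) additionally requires the standard $0$--$1$ law for the event $\{M^\pm_\infty = 0\}$, which holds here because the population never becomes extinct.
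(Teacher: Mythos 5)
Your part (a) follows essentially the same route as the paper: identify $M^\pm$ with the spine martingale, use the spine decomposition, the $\tilde{Q}$-a.s.\ facts $\xi_t/t\to 0$, $\tilde{L}_t/t\to\hat{\beta}_0$, $S_n'\sim n/(m\beta)$, $S_n^0\sim n/(m_0\beta_0\hat{\beta}_0)$, and the Borel--Cantelli dichotomy for $A_n'$, $A_n^0$ to bound $\sup_t\tilde{Q}(M^\pm_t\mid\tilde{\mathcal{G}}_\infty)$, then pass to $M^\pm_\infty>0$ via conditional Fatou, the $Q$-supermartingale $1/M^\pm$, identity \eqref{durrett} and the zero--one law. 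That half is fine.

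Part (b), however, has a genuine gap at the final bridging step. You prove only that $\tilde{Q}\big(M^\pm_t\mid\tilde{\mathcal{G}}_\infty\big)\to\infty$ and then assert that this ``forces $M^\pm_\infty=0$ $P$-a.s.'' No such implication is available: the dichotomy \eqref{durrett} requires knowing that $\limsup_{t\to\infty}M^\pm_t=\infty$ $Q^\pm$-a.s., i.e.\ a statement about the martingale itself, and divergence of its conditional expectation given the spine does not deliver this for free. Indeed, conditionally on $\tilde{\mathcal{G}}_\infty$ the term indexed by $n$ in the decomposition is $(A_n-1)\,spine(S_n)$ times an independent unit-mean additive martingale of an unbiased copy of the process evaluated at time $t-S_n$; in the very regime you are treating (failure of \eqref{xlogx}) those subtree martingales converge to $0$ almost surely, so each summand's conditional mean stays at $(A_n-1)spine(S_n)$ while its actual value at large $t$ collapses --- which is exactly why one cannot infer blow-up of $M^\pm_t$ from blow-up of $\tilde{Q}(M^\pm_t\mid\tilde{\mathcal{G}}_\infty)$ without a further argument. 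The fix is short and is what the paper does: evaluate the martingale at the fission times and keep only the particles just born from the spine, giving $M^\pm_{S_n'}\geq A_n'\,spine(S_n')$ (and similarly with $S_n^0$, $A_n^0$); combined with the facts you already established ($A_n'\geq \mathrm{e}^{cn}$ infinitely often for every $c>0$, and $spine(S_n')\geq \mathrm{e}^{-(K+\delta)n}$ eventually as in \eqref{spine_bounds}), this yields $\limsup_{t\to\infty}M^\pm_t=\infty$ $Q^\pm$-a.s., and only then does \eqref{durrett} give $M^\pm_\infty=0$ $P$-a.s. So your preparatory estimates are exactly the right ones, but as written the conclusion of (b) does not follow from them.
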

Recall that martingale \eqref{M_one} when used as the  Radon-Nikodym derivative has the effect of putting constant drift of magnitude $\hat{\beta}_0$ towards the origin onto $(\xi_t)_{t \geq 0}$. More details on this can be found in \cite{BH14} or \cite{BS02} (``Brownian Motion with alternating drift", pp. 128-129).
\begin{proof}
Let $Q^\pm$ and $\tilde{Q}^\pm$ be the probability measures associated with martingales \eqref{eq_M_pm} and \eqref{M_one} as previously described in this subsection.

A standard argument which relies only on point-recurrence of $(\xi_t)_{t \geq 0}$ under $\tilde{P}$ (see e.g. \cite{JHH09}) tells us that $P(M^\pm_\infty > 0) \in \{0, 1\}$.

So if we could show that $Q^\pm(\limsup_{t \to \infty} M^\pm_t = \infty) = 1$ then by taking $A = \Omega$ in \eqref{durrett} we would get $1 = E M^\pm_\infty + 1$ and hence $M^\pm_\infty = 0 \ P$-almost surely.

On the other hand, if we could show that $Q^\pm(\limsup_{t \to \infty} M^\pm_t = \infty) = 0$ then, again by taking $A = \Omega$ in \eqref{durrett}, we would get $1 = E M^\pm_\infty + 0$ and hence $P(M^\pm_\infty > 0) > 0$, which from the $0-1$ law above is the same as $P(M^\pm_\infty > 0) = 1$.

Thus to prove the proposition it is sufficient to show that if condition \eqref{xlogx} is satisfied then  
$Q^\pm(\limsup_{t \to \infty} M^\pm_t = \infty) = 0$ and if condition \eqref{xlogx} is not satisfied then  
$Q^\pm(\limsup_{t \to \infty} M^\pm_t = \infty) =1$.

Let us observe that because of the effect of martingale $\tilde{M}^{(3)}$ for any choice of $c > 0$ we have
\begin{align*}
\frac{1}{c} \tilde{E}\Big(A_1' \log A_1'\Big) &= \frac{1}{c} m \tilde{Q}^\pm\Big(\log A_1'\Big)\\
&= m \int_0^\infty \tilde{Q}^\pm\Big( \frac{1}{c} \log A_1' \geq t\Big) \mathrm{d}t\\
& = m \sum_{n=0}^\infty \int_n^{n+1} \tilde{Q}^\pm\Big(\log A_1' \geq ct\Big) \mathrm{d}t
\end{align*}
Hence by monotonicity of $\tilde{Q}^\pm\Big(\log A_1' \geq ct\Big)$ we have 
\[
m c \sum_{n=1}^\infty \tilde{Q}^\pm\Big(\log A_1' \geq c n\Big) \leq \tilde{E}\Big(A_1' \log A_1'\Big) 
\leq m c \sum_{n=0}^\infty \tilde{Q}^\pm\Big(\log A_1' \geq c n\Big).
\]
Then since $(A_n')_{n \geq 1}$ are i.i.d. random variables it follows that 
\[
\sum_{n = 1}^\infty \tilde{Q}^\pm\Big(A_n' \geq \mathrm{e}^{cn} \Big) = 
\sum_{n = 1}^\infty \tilde{Q}^\pm\Big(\log A_1' \geq c n\Big) < \infty \iff \tilde{E}\Big(A_1' \log A_1'\Big) = \sum_{n=1}^\infty p_n n \log n < \infty
\]
and then by first and second Borel-Cantelli Lemmas
\begin{equation}
\label{xlogx_dichotomy}
\tilde{Q}^\pm \Big( \big\{ A_n' \geq \mathrm{e}^{cn}\big\} \text{ i.o. }\Big) = \left\{
\begin{array}{rl}
0& \text{if } \sum_{n=1}^\infty p_n n \log n < \infty, \\
1& \text{if } \sum_{n=1}^\infty p_n n \log n = \infty. 
\end{array} \right.
\end{equation}
Identical argument gives 
\begin{equation}
\label{xlogx_dichotomy_0}
\tilde{Q}^\pm \Big( \big\{ A_n^0 \geq \mathrm{e}^{cn}\big\} \text{ i.o. }\Big) = \left\{
\begin{array}{rl}
0& \text{if } \sum_{n=1}^\infty q_n n \log n < \infty, \\
1& \text{if } \sum_{n=1}^\infty q_n n \log n = \infty.
\end{array} \right.
\end{equation}
Let us emphasize that dichotomies \eqref{xlogx_dichotomy} and \eqref{xlogx_dichotomy_0} hold for any choice of $c>0$.

\textit{Proof of a).} Assume now that condition \eqref{xlogx} holds and recall from Lemma \ref{spine_decomp} that 
\begin{align*}
\tilde{Q}^\pm \Big( M^\pm_t \big\vert \tilde{\mathcal{G}}_\infty \Big) &= 
spine(t) + \sum_{n=1}^{n_t'} (A_n' - 1) spine(S_n') + \sum_{n=1}^{n^0_t} (A_n^0 - 1) spine(S^0_n)\\
&\leq 1 + \sum_{n=1}^\infty (A_n' - 1) spine(S_n') + \sum_{n=1}^\infty (A_n^0 - 1) spine(S^0_n),
\end{align*}
where
\[
spine(t) = \mathrm{e}^{- \hat{\beta}_0 |\xi_t| - \frac{1}{2} \hat{\beta}_0^2t - \hat{\beta}t}.
\]
We know that $\tilde{Q}^\pm$-almost surely $\frac{\xi_t}{t} \to 0$, $\frac{\tilde{L}_t}{t} \to \hat{\beta}_0$, $\frac{n_t'}{t} \to m \beta$ and $\frac{n^0_t}{\tilde{L}_t} \to m_0 \beta_0$ as $t \to \infty$.

It follows that $S_n' \sim \frac{1}{m \beta}n$ as $n \to \infty \ \tilde{Q}^\pm$-a.s. so that for any $\delta > 0$
\begin{equation}
\label{spine_bounds}
\mathrm{e}^{-(K+\delta)n} \leq spine(S_n') \leq \mathrm{e}^{-(K-\delta)n} \ \text{eventually } \qquad \tilde{Q}^\pm \text{-a.s.}
\end{equation}
where $K = (\frac{1}{2}\hat{\beta}_0^2 + \hat{\beta})\frac{1}{m \beta} > 0$ which together with the first line of \eqref{xlogx_dichotomy} yields 
\[
\sum_{n=1}^\infty (A_n' - 1) spine(S_n') < \infty \qquad \tilde{Q}^\pm \text{-a.s.}
\]
Similarly $S^0_n \sim \frac{1}{m_0 \beta_0 \hat{\beta}_0}n$ as $n \to \infty$ which 
together with the first line of \eqref{xlogx_dichotomy_0} yields
\[
\sum_{n=1}^\infty (A^0_n - 1) spine(S^0_n) < \infty \qquad \tilde{Q}^\pm \text{-a.s.}
\]
We have thus shown that 
\[
\limsup_{t \to \infty} \tilde{Q}^\pm \Big( M^\pm_t \big\vert \tilde{\mathcal{G}}_\infty \Big) < \infty.
\]
Applying conditional Fatou's Lemma we get
\[
\tilde{Q}^\pm \Big( \liminf_{t \to \infty} M^\pm_t \big\vert \tilde{\mathcal{G}}_\infty\Big) \leq 
\liminf_{t \to \infty} \tilde{Q}^\pm \Big(M^\pm_t \big\vert \tilde{\mathcal{G}}_\infty\Big) \leq 
\limsup_{t \to \infty} \tilde{Q}^\pm \Big(M^\pm_t \big\vert \tilde{\mathcal{G}}_\infty\Big) < \infty
\]
which implies that $\liminf_{t \to \infty} M^\pm_t < \infty$ $\tilde{Q}^\pm$-a.s. and hence also $Q^\pm$-a.s. (because $\{ \liminf_{t \to \infty} M^\pm_t < \infty \} \in \mathcal{F}_\infty$). Then since $\frac{1}{M^\pm}$ is a positive supermartingale under $Q^\pm$ (in fact a true martingale as there is no extinction) it must converge so that 
\[
\limsup_{t \to \infty} M^\pm_t = \liminf_{t \to \infty} M^\pm_t < \infty \qquad \tilde{Q}^\pm\text{-a.s.}
\]
which is sufficient to prove part a) of the proposition.

\textit{Proof of b)} Assume now that $\sum_{n \geq 1} p_n n \log n = \infty$. Then counting only particles born from the spine we get
\[
M^\pm_{S_n'} \geq A_n' spine(S_n')
\]
so that the first inequality in \eqref{spine_bounds} and the second line in \eqref{xlogx_dichotomy} give us that $\tilde{Q}^\pm$ and hence also $Q^\pm$-almost surely
\[
\limsup_{n \to \infty} M^\pm_{S_n'} = \infty.
\]
Therefore we also get
\[
\limsup_{t \to \infty} M^\pm_t = \infty \qquad Q^\pm \text{a.s.}
\]
which proves the sought result.

If $\sum_{n \geq 1} q_n n \log n = \infty$ then we arrive at the same conclusion by replacing $(S_n')_{n \geq 1}$ with $(S^0_n)_{n \geq 1}$ and $(A_n')_{n \geq 1}$ with $(A^0_n)_{n \geq 1}$ in the above argument.
\end{proof}
From Proposition \ref{M_pm} we can now easily derive the required lower bound for Lemma \ref{population_as}.
\begin{Proposition}Lower bound for Lemma \ref{population_as}
\label{lower_pop}$ $

Suppose that condition \eqref{xlogx} on the offspring distribution is satisfied. Then 
\begin{equation}
\label{eq_lower_pop}
\liminf_{t \to \infty} \frac{1}{t} \log |N_t| \geq \frac{1}{2} \hat{\beta}_0^2 + \hat{\beta} \qquad P \text{-a.s.}
\end{equation}
\end{Proposition}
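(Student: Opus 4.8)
The plan is to read the lower bound directly off the non-triviality of the martingale $M^{\pm}$ from Proposition \ref{M_pm}. The key elementary observation is that each summand in \eqref{eq_M_pm} satisfies $\mathrm{e}^{-\hat{\beta}_0 |X^u_t|} \le 1$, so that
\[
M^{\pm}_t = \sum_{u \in N_t} \mathrm{e}^{-\hat{\beta}_0 |X^u_t| - \frac{1}{2}\hat{\beta}_0^2 t - \hat{\beta} t} \le |N_t|\, \mathrm{e}^{-\frac{1}{2}\hat{\beta}_0^2 t - \hat{\beta} t},
\]
equivalently $|N_t| \ge M^{\pm}_t\, \mathrm{e}^{(\frac{1}{2}\hat{\beta}_0^2 + \hat{\beta})t}$ for every $t \ge 0$.

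Taking logarithms and dividing by $t$ gives
\[
\frac{1}{t} \log |N_t| \ge \frac{1}{t} \log M^{\pm}_t + \tfrac{1}{2}\hat{\beta}_0^2 + \hat{\beta} \qquad P\text{-a.s.}
\]
Under the $X \log X$ condition \eqref{xlogx}, Proposition \ref{M_pm}(a) yields $M^{\pm}_\infty > 0$ $P$-almost surely, and since $(M^{\pm}_t)_{t \ge 0}$ is a non-negative $P$-martingale it converges $P$-a.s. to this limit $M^{\pm}_\infty$, which by the martingale convergence theorem is also almost surely finite. Hence along the $P$-a.s. convergent path $\log M^{\pm}_t$ remains bounded, so $\frac{1}{t} \log M^{\pm}_t \to 0$; taking $\liminf$ on both sides of the displayed inequality gives \eqref{eq_lower_pop}.

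I do not expect any real obstacle at this stage: all the substance has already been absorbed into Proposition \ref{M_pm}, whose proof used the spine decomposition (Lemma \ref{spine_decomp}) together with the $X \log X$ dichotomies \eqref{xlogx_dichotomy}–\eqref{xlogx_dichotomy_0} to rule out $M^{\pm}_\infty = 0$. The only point worth stating explicitly is the two-sided control of $M^{\pm}_t$: the upper bound $M^{\pm}_t \le |N_t|\,\mathrm{e}^{-(\frac{1}{2}\hat{\beta}_0^2+\hat{\beta})t}$ supplies the lower bound for $|N_t|$, while finiteness of $M^{\pm}_\infty$ ensures the martingale term is negligible on the exponential scale. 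Combined with Proposition \ref{upper_pop} this completes the proof of Lemma \ref{population_as}.
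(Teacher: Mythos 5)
Your proof is correct and follows essentially the same route as the paper: bound $M^{\pm}_t \le |N_t|\,\mathrm{e}^{-(\frac{1}{2}\hat{\beta}_0^2+\hat{\beta})t}$, take logarithms, and use $M^{\pm}_\infty>0$ (Proposition \ref{M_pm}(a) under \eqref{xlogx}) so that $\frac{1}{t}\log M^{\pm}_t$ contributes nothing in the limit. The extra remark about finiteness of $M^{\pm}_\infty$ is harmless but not needed for the lower bound, since only $\liminf_{t\to\infty}\frac{1}{t}\log M^{\pm}_t \ge 0$ is used.
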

\begin{proof}
\[
|N_t| \mathrm{e}^{- \frac{1}{2} \hat{\beta}_0^2 t - \hat{\beta}t} \geq \sum_{u \in N_t} 
\mathrm{e}^{- \hat{\beta}_0|X^u_t| - \frac{1}{2}\hat{\beta}_0^2t - \hat{\beta}t} = M^\pm_t.
\]
Then
\[
\frac{\log |N_t|}{t} \geq \frac{1}{2} \hat{\beta}_0^2 + \hat{\beta} + \frac{\log M^\pm_t}{t}
\]
and since under condition \eqref{xlogx} $M^\pm_\infty > 0$ $P$-almost surely it follows that
\[
\liminf_{t \to \infty} \frac{1}{t} \log |N_t| \geq \frac{1}{2} \hat{\beta}_0^2 + \hat{\beta} \qquad P \text{-a.s.}
\]
In fact we have an even stronger inequality
\[
\liminf_{t \to \infty} \mathrm{e}^{- \frac{1}{2} \hat{\beta}_0^2 t - \hat{\beta}t} |N_t| \geq 
M^\pm_\infty > 0 \qquad P \text{-a.s.}
\]
\end{proof}
Propositions \ref{lower_pop} and \ref{upper_pop} together prove Lemma \ref{population_as}.

In the rest of this subsection we would like to present some results for a purely homogeneous branching 
process ($\beta_0 = 0$) which we shall make use of in the next section. We begin by stating the following result from \cite{K04} (Theorem 1).
\begin{Proposition}
\label{M_lambda}
Consider a branching process with only homogeneous branching present ($\beta_0 = 0$).
For $\lambda \in \mathbb{R}$ let 
\begin{equation}
\label{eq_M_lambda}
M_t^\lambda := \sum_{u \in N_t} \mathrm{e}^{ \lambda X^u_t - \frac{1}{2} \lambda^2 t - \hat{\beta} t} \text{ , } \qquad t \geq 0
\end{equation}
be the $P$-martingale derived through the procedure described at the begining of this subsection by taking 
\begin{equation}
\label{M_one_lambda}
\tilde{M}^{(1)}_t = \mathrm{e}^{\lambda \xi_t - \frac{1}{2} \lambda^2 t} \qquad \text{ , } t \geq 0.
\end{equation}
a) If $\sum_{n \geq 1} p_n n \log n < \infty$ and $|\lambda| < \sqrt{2 \hat{\beta}}$ then
\[
M^\lambda_\infty > 0 \qquad P \text{-a.s.}
\]
b) If $\sum_{n \geq 1} p_n n \log n < \infty$ and $|\lambda| > \sqrt{2 \hat{\beta}}$ then
\[
M^\lambda_\infty = 0 \qquad P \text{-a.s.}
\]
c) If $\sum_{n \geq 1} p_n n \log n = \infty$ then
\[
M^\lambda_\infty = 0 \qquad P \text{-a.s.}
\]
\end{Proposition}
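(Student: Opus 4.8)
This is Theorem~1 of \cite{K04}, so one could simply cite it; but since all the needed tools are already in place, the plan is to obtain it by re-running the proof of Proposition~\ref{M_pm} with the catalytic terms switched off. First I would introduce $Q^\lambda$ and $\tilde{Q}^\lambda$, the measures attached to the martingales \eqref{eq_M_lambda} and \eqref{M_one_lambda} as in Subsection~2.3. Under $\tilde{Q}^\lambda$ the spine $(\xi_t)_{t\ge0}$ is a Brownian motion with constant drift $\lambda$, the fissions along the spine form a homogeneous Poisson process of rate $m\beta$, and the family sizes $(A_n)_{n\ge1}$ are i.i.d.\ with the size-biased law $(\tfrac{k}{m}p_k)_{k\ge1}$; in particular $\tilde{Q}^\lambda$-a.s.\ $\xi_t/t\to\lambda$ and $n_t/t\to m\beta$, so $S_n\sim n/(m\beta)$ and
\[
spine(t)=\mathrm{e}^{\lambda\xi_t-\frac12\lambda^2 t-\hat\beta t},\qquad spine(S_n)=\exp\Big\{\big(\tfrac12\lambda^2-\hat\beta\big)\,S_n\,(1+o(1))\Big\}\quad\tilde{Q}^\lambda\text{-a.s.}
\]
The $X\log X$ dichotomy \eqref{xlogx_dichotomy} is re-derived verbatim for $(A_n)_{n\ge1}$ under $\tilde{Q}^\lambda$ from $\tilde{E}(A_1\log A_1)=m\,\tilde{Q}^\lambda(\log A_1)=\sum_n p_nn\log n$. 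As in Proposition~\ref{M_pm}, everything then reduces, via \eqref{durrett} with $A=\Omega$, to identifying $Q^\lambda(\limsup_{t\to\infty}M^\lambda_t=\infty)$: if it is $0$ then $EM^\lambda_\infty=1$, so $M^\lambda$ is uniformly integrable and $P(M^\lambda_\infty>0)>0$, while if it is $1$ then $M^\lambda_\infty=0$ $P$-a.s.

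To turn ``$P(M^\lambda_\infty>0)>0$'' into ``$=1$'' in part (a) I also need the zero--one law $P(M^\lambda_\infty>0)\in\{0,1\}$. Since the spine under $\tilde{Q}^\lambda$ has constant drift $\lambda$ it is transient when $\lambda\ne0$, and the point-recurrence argument used for $M^\pm$ no longer applies; instead I would decompose $M^\lambda_\infty$ at the first fission time $\tau$ of the initial particle, using the shift-covariance of the weight $\mathrm{e}^{\lambda x-\frac12\lambda^2 t}$, as $M^\lambda_\infty=\mathrm{e}^{\lambda X_\tau-\frac12\lambda^2\tau-\hat\beta\tau}\sum_{i=1}^{A}M^{\lambda,i}_\infty$ with the $M^{\lambda,i}_\infty$ i.i.d.\ copies of $M^\lambda_\infty$, so that $\theta:=P(M^\lambda_\infty=0)$ solves $\theta=\sum_{n\ge1}p_n\theta^n=:f(\theta)$; convexity of $f$ together with $f(0)=0$ and $f(1)=1$ then forces $\theta\in\{0,1\}$. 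With $\sum_n p_nn\log n<\infty$ and $|\lambda|<\sqrt{2\hat\beta}$ in hand, I would feed the spine decomposition (Lemma~\ref{spine_decomp}, catalytic sum absent), $\tilde{Q}^\lambda(M^\lambda_t\mid\tilde{\mathcal{G}}_\infty)=spine(t)+\sum_{n=1}^{n_t}(A_n-1)spine(S_n)$, the facts that $\tfrac12\lambda^2<\hat\beta$ makes $spine(t)\to0$ and the $spine(S_n)$ decay exponentially in $n$, and that $A_n\le\mathrm{e}^{cn}$ eventually for any fixed $c>0$; this yields $\sum_{n\ge1}(A_n-1)spine(S_n)<\infty$ and hence $\limsup_t\tilde{Q}^\lambda(M^\lambda_t\mid\tilde{\mathcal{G}}_\infty)<\infty$, all $\tilde{Q}^\lambda$-a.s. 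Conditional Fatou then gives $\liminf_t M^\lambda_t<\infty$ $\tilde{Q}^\lambda$- and hence $Q^\lambda$-a.s.; since $1/M^\lambda$ is a nonnegative $Q^\lambda$-martingale (there is no extinction) it converges, so $\limsup_t M^\lambda_t=\liminf_t M^\lambda_t<\infty$ $Q^\lambda$-a.s., whence $EM^\lambda_\infty=1$ and, by the zero--one law, $M^\lambda_\infty>0$ $P$-a.s.

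For part (b), assume $\sum_n p_nn\log n<\infty$ and $|\lambda|>\sqrt{2\hat\beta}$. Here already $spine(t)\le M^\lambda_t$ and, since $\xi_t/t\to\lambda$ with $\tfrac12\lambda^2>\hat\beta$, one has $spine(t)\to\infty$ $\tilde{Q}^\lambda$-a.s.; hence $M^\lambda_t\to\infty$ $\tilde{Q}^\lambda$- and therefore $Q^\lambda$-a.s., which gives $M^\lambda_\infty=0$ $P$-a.s. For part (c), assume $\sum_n p_nn\log n=\infty$. Counting only the particles born at the spine fissions gives $M^\lambda_{S_n}\ge A_n\,spine(S_n)$, and since $spine(S_n)\ge\mathrm{e}^{-Cn}$ eventually for some constant $C\ge0$, the ``infinitely often'' half of the $X\log X$ dichotomy applied with $c>C$ forces $M^\lambda_{S_n}\to\infty$ along a random subsequence, so again $Q^\lambda(\limsup_t M^\lambda_t=\infty)=1$ and $M^\lambda_\infty=0$ $P$-a.s.

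The only genuinely new ingredient compared with Proposition~\ref{M_pm} is the zero--one law, which now has to be extracted from the branching property rather than from recurrence of the spine; apart from that the argument is the same, with the catalytic contributions deleted and the role of the threshold $\hat\beta_0$ played instead by the comparison between $\tfrac12\lambda^2$ and $\hat\beta$. The step I expect to require the most care is the passage from $\liminf_t M^\lambda_t<\infty$ to $\limsup_t M^\lambda_t<\infty$ in part (a): it relies crucially on the absence of extinction, so that $1/M^\lambda$ is a genuine $Q^\lambda$-martingale and therefore converges, rather than merely a positive supermartingale.
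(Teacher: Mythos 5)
Your proof is correct and follows the same spine-decomposition route that the paper uses for Proposition~\ref{M_pm} and only sketches for Proposition~\ref{M_lambda} (which it attributes to Theorem~1 of \cite{K04}): compute the long-run growth of $spine(t)$ under $\tilde{Q}^\lambda$, invoke the $X\log X$ dichotomy~\eqref{xlogx_dichotomy}, and conclude via~\eqref{durrett}. The one place you genuinely depart from the paper's template is the zero--one law, and there your motivation is slightly off even though your substitute argument is sound. You write that the recurrence argument ``no longer applies'' because $(\xi_t)$ is transient under $\tilde{Q}^\lambda$; but the recurrence invoked in the proof of Proposition~\ref{M_pm} is that of $(\xi_t)$ under $\tilde{P}$, where the spine is always a standard (hence point-recurrent) Brownian motion regardless of which martingale one later uses to change measure, so that route remains available here. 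Your replacement --- decomposing $M^\lambda_\infty$ at the first fission time, using the spatial homogeneity of the pure-$\beta$ branching mechanism so that $\theta := P^x(M^\lambda_\infty = 0)$ does not depend on $x$, and solving $\theta = f(\theta)$ with $f(s)=\sum_n p_n s^n$ --- is nonetheless a perfectly good and arguably cleaner argument in the homogeneous setting; just note that forcing $\theta\in\{0,1\}$ requires \emph{strict} convexity of $f$, i.e.\ $p_1<1$, the excluded case $p_1=1$ being trivial since then $\hat\beta=0$, parts (a) and (c) are vacuous, and $M^\lambda_t=\mathrm{e}^{\lambda\xi_t-\lambda^2 t/2}\to 0$ for $\lambda\ne0$. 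The remaining steps (Fatou plus the $1/M^\lambda$ martingale argument for (a), $spine(t)\to\infty$ for (b), and the second Borel--Cantelli half of the dichotomy for (c)) reproduce the paper's treatment of Proposition~\ref{M_pm} with the catalytic terms removed, exactly as you describe.
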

The proof is essenially the same as that of Proposition \ref{M_pm}. If we define $Q^\lambda$ and $\tilde{Q}^\lambda$ as probability measures associated with martingales \eqref{eq_M_lambda} and \eqref{M_one_lambda} then we would see that under $\tilde{Q}^\lambda$ the $spine(t)$ term would grow exponentially if $|\lambda| > \sqrt{2\hat{\beta}}$ and decay exponentially if $|\lambda| < \sqrt{2\hat{\beta}}$ which together with dichotomy \eqref{xlogx_dichotomy} would lead to the required result.

We shall now make use of Proposition \ref{M_lambda} to get lower bounds on $|N_t^{\lambda t}|$ in purely homogeneous branching systems. 
\begin{Proposition}
\label{pre_lower1}
Consider a branching process with only homogeneous branching present ($\beta_0 = 0$). If $\lambda \in (0, \sqrt{2 \hat{\beta}})$ and $\sum_{n \geq 1} p_n n \log n < \infty$ then
\begin{equation}
\label{eq_pre_lower1}
\liminf_{t \to \infty} \frac{1}{t} \log|N_t^{\lambda t}| \geq \hat{\beta} - \frac{\lambda^2}{2} \qquad 
P \text{-a.s.}
\end{equation}
\end{Proposition}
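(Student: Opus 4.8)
The plan is to use the additive martingale $M^\mu_t$ of \eqref{eq_M_lambda} for a parameter $\mu$ chosen slightly larger than $\lambda$ but still below $\sqrt{2\hat\beta}$. The point is that, since $M^\mu_\infty>0$, the weighted sum $\sum_{u\in N_t}\mathrm{e}^{\mu X^u_t}$ grows at the exponential rate $\tfrac12\mu^2+\hat\beta$, and almost all of this mass comes from particles travelling at speed close to $\mu>\lambda$; these particles therefore lie well above the line $\lambda t$ and, being exponentially many, force $|N^{\lambda t}_t|$ to be large. Concretely, fix $\mu\in(\lambda,\sqrt{2\hat\beta})$. Under the $X\log X$ condition and $|\mu|<\sqrt{2\hat\beta}$, Proposition \ref{M_lambda}(a) gives $M^\mu_\infty\in(0,\infty)$ $P$-a.s., and since $\sum_{u\in N_t}\mathrm{e}^{\mu X^u_t}=M^\mu_t\,\mathrm{e}^{(\frac12\mu^2+\hat\beta)t}$ with $M^\mu_t\to M^\mu_\infty>0$, we get that $P$-a.s., for every $\delta>0$ and all large $t$,
\[
\sum_{u\in N_t}\mathrm{e}^{\mu X^u_t}\ \geq\ \mathrm{e}^{(\frac12\mu^2+\hat\beta-\delta)t}.
\]

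Next I would discard the particles lying outside a suitable window. Pick also $\nu\in(\mu,\sqrt{2\hat\beta})$ and estimate $\sum_{u:\,X^u_t\notin(\lambda t,\nu t]}\mathrm{e}^{\mu X^u_t}$, splitting it into the contribution of $\{X^u_t\le\lambda t\}$ and that of $\{X^u_t>\nu t\}$. For the first piece, partition the half-line below $\lambda t$ into $\{X^u_t\le\epsilon t\}$ and the finitely many slabs $(k\epsilon t,(k+1)\epsilon t]$ with $\epsilon\le k\epsilon<\lambda$; bound the number of particles in each slab by $|N^{k\epsilon t}_t|$ (and by $|N_t|$ on $\{X^u_t\le\epsilon t\}$) and the weight $\mathrm{e}^{\mu X^u_t}$ by $\mathrm{e}^{\mu(k+1)\epsilon t}$, using the continuous-time ``eventually a.s.'' bounds $|N^{at}_t|\le\mathrm{e}^{(\hat\beta-a^2/2+\epsilon)t}$ for $a>0$ (and $|N^{at}_t|\to0$ for $a>\sqrt{2\hat\beta}$, which makes the slab sums effectively finite) from Proposition \ref{upper_pop_lambda} specialised to $\beta_0=0$, where $\Delta_a=\hat\beta-a^2/2$ for $a\ge0$, together with $|N_t|\le\mathrm{e}^{(\hat\beta+\epsilon)t}$ from Lemma \ref{population_as}. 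Optimising the resulting exponents over the slabs and letting $\epsilon\downarrow0$ shows that the first piece grows at rate at most $\hat\beta-\tfrac12\lambda^2+\mu\lambda$, and the same slab decomposition of $(\nu t,\infty)$ shows the second piece grows at rate at most $\hat\beta-\tfrac12\nu^2+\mu\nu$. These two rates equal $\bigl(\tfrac12\mu^2+\hat\beta\bigr)-\tfrac12(\mu-\lambda)^2$ and $\bigl(\tfrac12\mu^2+\hat\beta\bigr)-\tfrac12(\mu-\nu)^2$ respectively, hence are strictly below $\tfrac12\mu^2+\hat\beta$.

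Combining the two steps, $P$-a.s. for all large $t$ we have $\sum_{u:\,\lambda t<X^u_t\le\nu t}\mathrm{e}^{\mu X^u_t}\ge\tfrac12\mathrm{e}^{(\frac12\mu^2+\hat\beta-\delta)t}$; bounding each weight on this set by $\mathrm{e}^{\mu\nu t}$ gives
\[
|N^{\lambda t}_t|\ \geq\ \#\{u:\lambda t<X^u_t\le\nu t\}\ \geq\ \tfrac12\,\mathrm{e}^{(\frac12\mu^2+\hat\beta-\mu\nu-\delta)t},
\]
so that $\liminf_{t\to\infty}\tfrac1t\log|N^{\lambda t}_t|\ge\tfrac12\mu^2+\hat\beta-\mu\nu-\delta$ $P$-a.s. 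Letting $\delta\downarrow0$, then $\nu\downarrow\mu$, then $\mu\downarrow\lambda$ yields $\liminf_{t\to\infty}\tfrac1t\log|N^{\lambda t}_t|\ge\hat\beta-\tfrac12\lambda^2$, which is \eqref{eq_pre_lower1}. Note that the hypothesis $\lambda<\sqrt{2\hat\beta}$ is used precisely to leave room for the choice $\lambda<\mu<\nu<\sqrt{2\hat\beta}$ required for Proposition \ref{M_lambda}(a).

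I expect the main obstacle to be the second step: one must make the slab decomposition genuinely uniform over continuous time (so that the finitely many Borel--Cantelli ``eventually a.s.'' events can be intersected), and verify that the homogeneous specialisation of Proposition \ref{upper_pop_lambda} delivers exactly the exponents $\hat\beta-a^2/2$ needed and that slabs above $\sqrt{2\hat\beta}\,t$ are empty for large $t$, so the geometric-looking slab sums are in fact finite sums. Everything else reduces to elementary optimisation of the quadratics appearing in the exponents.
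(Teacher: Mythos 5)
Your proposal is correct, and its skeleton is the same as the paper's: fix $\mu\in(\lambda,\sqrt{2\hat\beta})$, invoke Proposition \ref{M_lambda}(a) to get $M^\mu_\infty>0$ almost surely, localise the martingale mass to a linear window just above $\lambda t$, and then trade the exponential weights for a particle count. Where you differ is in how the out-of-window mass is discarded. The paper handles it entirely within the martingale framework: for particles above $(\lambda+2\delta)t$ it multiplies by the extra tilt $\mathrm{e}^{\delta X^u_t-\delta(\lambda+2\delta)t}\geq 1$ to dominate the tail by $\mathrm{e}^{-\delta^2 t/2}M^{\lambda+2\delta}_t$, and symmetrically dominates the low tail by $\mathrm{e}^{-\delta^2 t/2}M^{\lambda}_t$; since these neighbouring martingales converge to finite limits, both tails vanish and the windowed sum converges to $M^{\lambda+\delta}_\infty>0$ (note also that on the window the weight is at most $1$, so no division by $\mathrm{e}^{\mu\nu t}$ is needed -- the indicator is bounded below by the weight directly). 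Your route instead controls the tails by a slab decomposition combined with the almost-sure count bounds $|N^{at}_t|\leq\mathrm{e}^{(\hat\beta-a^2/2+\epsilon)t}$, $|N^{at}_t|\to 0$ for $a>\sqrt{2\hat\beta}$ and $|N_t|\leq\mathrm{e}^{(\hat\beta+\epsilon)t}$, all of which the paper does supply in continuous time (Proposition \ref{upper_pop_lambda} and Proposition \ref{upper_pop}, neither needing the $X\log X$ condition), so intersecting the finitely many ``eventually a.s.'' events is legitimate and your exponent optimisation is right; the only points needing routine care are the slab straddling $\sqrt{2\hat\beta}$ (bound it by a slightly smaller level) and absorbing the $O(\epsilon)$ errors, and the loss from bounding each weight by $\mathrm{e}^{\mu\nu t}$ disappears in the limits $\delta\downarrow 0$, $\nu\downarrow\mu$, $\mu\downarrow\lambda$. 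In short: the paper's tilting trick is slicker and self-contained (only martingale convergence is used), while your argument is more elementary at that step but leans on the previously established upper bounds; both yield \eqref{eq_pre_lower1}.
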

\begin{proof}
For any choice of $\delta > 0$ such that $\lambda + \delta < \sqrt{2 \hat{\beta}}$ we have the following lower bound on $|N_t^{\lambda t}|$:
\begin{align}
\label{lower_hom}
|N_t^{\lambda t}| &\geq \sum_{u \in N_t} \mathbf{1}_{\{ \lambda t \leq X^u_t \leq (\lambda + 2 \delta)t\}}\nonumber\\
&\geq \sum_{u \in N_t} \mathrm{e}^{(\lambda + \delta)X^u_t - (\lambda + \delta)(\lambda + 2 \delta)t}
\mathbf{1}_{\{ \lambda t \leq X^u_t \leq (\lambda + 2 \delta)t\}}\nonumber\\
&= \mathrm{e}^{\hat{\beta}t - \frac{1}{2}(\lambda + \delta)^2t - \delta(\lambda + \delta)t} \sum_{u \in N_t} \mathrm{e}^{(\lambda + \delta)X^u_t - \frac{1}{2}(\lambda + \delta)^2t - \hat{\beta}t} \mathbf{1}_{\{ \lambda t \leq X^u_t \leq (\lambda + 2 \delta)t\}}
\end{align}
We now claim that as $t \to \infty$
\begin{equation}
\label{mart_indicator1}
\sum_{u \in N_t} \mathrm{e}^{(\lambda + \delta)X^u_t - \frac{1}{2}(\lambda + \delta)^2t - \hat{\beta}t} \mathbf{1}_{\{ \lambda t \leq X^u_t \leq (\lambda + 2 \delta)t\}} \to M^{\lambda + \delta}_\infty 
\qquad P \text{-a.s.,}
\end{equation}
where $M^{\lambda + \delta}$ is the same martingale as in Proposition \ref{M_lambda}. Indeed, 
\begin{align}
\label{mart_indicator2}
&\sum_{u \in N_t} \mathrm{e}^{(\lambda + \delta)X^u_t - \frac{1}{2}(\lambda + \delta)^2t - \hat{\beta}t} \mathbf{1}_{\{ X^u_t > (\lambda + 2 \delta)t\}} \nonumber\\
\leq &\sum_{u \in N_t} \mathrm{e}^{(\lambda + \delta)X^u_t - \frac{1}{2}(\lambda + \delta)^2t - \hat{\beta}t} \mathbf{1}_{\{ X^u_t > (\lambda + 2 \delta)t\}} \mathrm{e}^{\delta X^u_t - \delta(\lambda + 2 \delta)t} \nonumber\\
= &\mathrm{e}^{- \frac{1}{2}\delta^2 t} \sum_{u \in N_t} \mathrm{e}^{(\lambda + 2\delta)X^u_t - \frac{1}{2}(\lambda + 2\delta)^2t - \hat{\beta}t} \mathbf{1}_{\{ X^u_t > (\lambda + 2 \delta)t\}} \nonumber\\
\leq &\mathrm{e}^{- \frac{1}{2}\delta^2 t} M^{\lambda + 2 \delta}_t \to 0 \qquad P \text{-a.s.}
\end{align}
using the fact that $M^{\lambda + 2 \delta}$ converges $P$-almost surely to a finite limit. Similarly, we have
\begin{align}
\label{mart_indicator3}
&\sum_{u \in N_t} \mathrm{e}^{(\lambda + \delta)X^u_t - \frac{1}{2}(\lambda + \delta)^2t - \hat{\beta}t} \mathbf{1}_{\{ X^u_t < \lambda t\}} \nonumber\\
\leq &\sum_{u \in N_t} \mathrm{e}^{(\lambda + \delta)X^u_t - \frac{1}{2}(\lambda + \delta)^2t - \hat{\beta}t} \mathbf{1}_{\{ X^u_t < \lambda t\}} \mathrm{e}^{-\delta X^u_t + \delta \lambda t} \nonumber\\
= &\mathrm{e}^{- \frac{1}{2}\delta^2 t} \sum_{u \in N_t} \mathrm{e}^{\lambda X^u_t - \frac{1}{2}\lambda^2t - \hat{\beta}t} \mathbf{1}_{\{ X^u_t < \lambda t\}} \nonumber\\
\leq &\mathrm{e}^{- \frac{1}{2}\delta^2 t} M^{\lambda }_t \to 0 \qquad P \text{-a.s.}
\end{align}
Thus from \eqref{mart_indicator2} and \eqref{mart_indicator3} it follows that
\begin{align*}
&\sum_{u \in N_t} \mathrm{e}^{(\lambda + \delta)X^u_t - \frac{1}{2}(\lambda + \delta)^2t - \hat{\beta}t} \mathbf{1}_{\{ \lambda t \leq X^u_t \leq (\lambda + 2 \delta)t\}}\\ 
= &M^{\lambda + \delta}_t - \sum_{u \in N_t} \mathrm{e}^{(\lambda + \delta)X^u_t - \frac{1}{2}(\lambda + \delta)^2t - \hat{\beta}t} \mathbf{1}_{\{ X^u_t > (\lambda + 2 \delta)t\}}\\ 
- &\sum_{u \in N_t} \mathrm{e}^{(\lambda + \delta)X^u_t - \frac{1}{2}(\lambda + \delta)^2t - \hat{\beta}t} \mathbf{1}_{\{ X^u_t < \lambda t\}} \to M^{\lambda + \delta}_\infty \qquad P \text{-a.s.}
\end{align*}
proving \eqref{mart_indicator1}. Moreover, from part a) of Proposition \ref{M_lambda} we know that $M^{\lambda + \delta}_\infty > 0 \ P$-almost surely. Hence from \eqref{lower_hom} and \eqref{mart_indicator1} we get
\[
\liminf_{t \to \infty} \frac{1}{t} \log |N_t^{\lambda t}| \geq \hat{\beta} - \frac{1}{2} (\lambda + \delta)^2 - \delta(\lambda + \delta) \qquad{P} \text{-a.s.}
\]
which proves the proposition after letting $\delta \to 0$.
\end{proof}
\begin{Proposition}
\label{pre_lower2}
Consider a branching process with only homogeneous branching present ($\beta_0 = 0$). Let
\begin{equation}
\label{set_ntl}
\tilde{N}_t^{\lambda} := \Big\{ u \in N_{t+1} \ : \ X_s^u > \lambda s \ \forall s \in [t, t+1] \Big\}.
\end{equation}
If $\lambda > \sqrt{2 \hat{\beta}}$ and $\sum_{n \geq 1} p_n n \log n < \infty$ then
\begin{equation}
\label{eq_pre_lower1}
\liminf_{t \to \infty} \frac{1}{t} \log P \big(|\tilde{N}_t^{\lambda}| > 0 \big) \geq \hat{\beta} - \frac{\lambda^2}{2} \qquad P \text{-a.s.}
\end{equation}
In particuar, it is also true that
\begin{equation}
\label{eq_pre_lower1b}
\liminf_{t \to \infty} \frac{1}{t} \log P \big(|N_t^{\lambda t}| > 0 \big) \geq \hat{\beta} - \frac{\lambda^2}{2} \qquad P \text{-a.s.}
\end{equation}
\end{Proposition}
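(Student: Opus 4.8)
The second assertion \eqref{eq_pre_lower1b} is an immediate consequence of the first: if $u\in\tilde N_t^\lambda$ (see \eqref{set_ntl}) then its time-$t$ ancestor $w\in N_t$ has $X^w_t=X^u_t>\lambda t$, so $\{|\tilde N_t^\lambda|>0\}\subseteq\{|N_t^{\lambda t}|>0\}$ and hence $P(|N_t^{\lambda t}|>0)\ge P(|\tilde N_t^\lambda|>0)$. So the task is to bound $P(|\tilde N_t^\lambda|>0)$ from below, and the plan is a second-moment (Paley--Zygmund) argument: construct a non-negative integer-valued $Y_t$ with $\{Y_t>0\}\subseteq\{|\tilde N_t^\lambda|>0\}$ whose second moment is of the same exponential order as the square of its first, and then use $P(Y_t>0)\ge (EY_t)^2/E[Y_t^2]$.

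Two reductions come first. (i) We may assume the offspring law $(p_n)$ has finite second moment: capping $A$ at $K$ at every birth (and deleting the suppressed children together with their descendants) only shrinks every $\tilde N_t^\lambda$, while the effective rate changes to $\hat\beta_K:=\beta(E[A\wedge K]-1)\uparrow\hat\beta$; since $\hat\beta_K<\hat\beta<\lambda^2/2$, it suffices to prove the bound with $\hat\beta_K$ in place of $\hat\beta$ for each $K$ and then let $K\to\infty$. (ii) We do not take $Y_t=|\tilde N_t^\lambda|$: the second moment of that raw count is dominated by lineages that branch early and then make a single atypically large excursion, which produces an exponent strictly below $\hat\beta-\lambda^2/2$. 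Instead, fix a large $w>0$ and let $Y_t$ count those $u\in N_{t+1}$ whose ancestral path stays in the narrow corridor $\lambda s-1<X^u_s\le\lambda s+w$ for all $s\in[0,t]$ and satisfies $X^u_s>\lambda s$ for all $s\in[t,t+1]$; then $\{Y_t>0\}\subseteq\{|\tilde N_t^\lambda|>0\}$, and it is enough to show $\liminf_{t}\frac1t\log P(Y_t>0)\ge\hat\beta-\frac{\lambda^2}{2}-\eta(w)$ with $\eta(w)\to0$ as $w\to\infty$.

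For the first moment, the Many-to-One Lemma (here $\hat\beta_0=0$) gives $EY_t=e^{\hat\beta(t+1)}\tilde P(\xi\ \text{obeys the constraints})$; after the Girsanov change of measure shifting $\xi$ to drift $\lambda$ this equals $e^{(\hat\beta-\lambda^2/2)(t+1)}$ times the probability that a standard Brownian motion stays in a fixed band of width $w+1$ (up to a bounded factor from the tighter constraint on $[t,t+1]$), and by the standard leading Dirichlet eigenfunction lower bound the latter is at least $c(w)e^{-\pi^2 t/(2(w+1)^2)}$; hence $\liminf_t\frac1t\log EY_t\ge\gamma_w:=\hat\beta-\frac{\lambda^2}{2}-\frac{\pi^2}{2(w+1)^2}$. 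For the second moment write $E[Y_t^2]=EY_t+E[Y_t(Y_t-1)]$ and expand the cross term by a Many-to-Two computation in the spirit of the Many-to-One Lemma (legitimate once $E[A(A-1)]<\infty$): it equals $\beta\,E[A(A-1)]\int_0^{t+1}\tilde E\big[e^{\hat\beta r}\mathbf 1_{\{\xi\ \text{in corridor on}\ [0,r]\}}\,\phi(r,\xi_r)^2\big]dr$, where $\phi(r,z)$ is the expected number of time-$(t+1)$ descendants of a single BBM started at $(r,z)$ that obey the corridor-and-line constraints on $[r,t]\cup[t,t+1]$. Applying the Many-to-One Lemma and Girsanov once more, the pre-branching weight $e^{\hat\beta r}\tilde P(\xi\ \text{in corridor on}\ [0,r])$ and each factor $\phi(r,z)$ are governed by the same eigenvalue estimates, so the integrand is at most $C(w)\,e^{\gamma_w r}e^{2\gamma_w(t-r)}=C(w)\,e^{2\gamma_w t-\gamma_w r}$. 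This is exactly where $\lambda>\sqrt{2\hat\beta}$ enters: it forces $\gamma_w<0$, so the $r$-integral is dominated by $r\approx t$ — the two lineages branch as late as possible — and $E[Y_t(Y_t-1)]\le C(w)e^{\gamma_w t}$, the same exponential order as $EY_t$ rather than its square.

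Putting the two estimates together, $E[Y_t^2]\le C(w)e^{\gamma_w t}$ while $(EY_t)^2\ge c(w)e^{2\gamma_w t}$, so $P(Y_t>0)\ge c(w)C(w)^{-1}e^{\gamma_w t}$ and therefore $\liminf_t\frac1t\log P(|\tilde N_t^\lambda|>0)\ge\gamma_w$; since $\gamma_w\to\hat\beta-\frac{\lambda^2}{2}$ as $w\to\infty$, and, restoring reduction (i), $\hat\beta_K-\frac{\lambda^2}{2}\to\hat\beta-\frac{\lambda^2}{2}$ as $K\to\infty$, this gives the first assertion, and \eqref{eq_pre_lower1b} then follows as noted. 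The crux is the second-moment estimate: one must choose the corridor so that its Dirichlet-eigenvalue cost enters the first and second moments with the \emph{same} coefficient $\gamma_w$ and tends to $0$ as $w\to\infty$; without the corridor truncation $E[Y_t^2]$ carries a strictly larger exponent and the method collapses. The offspring truncation in (i) is a routine secondary point, needed only to license the Many-to-Two formula.
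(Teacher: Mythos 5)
Your argument is correct in substance, but it follows a genuinely different route from the paper. The paper reuses its spine machinery: it bounds $P(|\tilde N_t^\lambda|>0)$ below by $P(S^{\lambda,t})$ for an event on which some particle stays above $\lambda s$ on $[t,t+1]$ and below a linear barrier $K+(\lambda+2\delta)s$ on $[0,t+1]$, rewrites this probability under the tilted measure $\tilde Q^{\lambda+\delta}$ associated with the additive martingale $M^{\lambda+\delta}$ as $\tilde Q^{\lambda+\delta}(\mathbf 1_{S^{\lambda,t}}/M^{\lambda+\delta}_{t+1})$, applies conditional Jensen to replace $M^{\lambda+\delta}_{t+1}$ by its spine decomposition given $\tilde{\mathcal G}_\infty$, and controls the spine sum using the linear barrier together with the $X\log X$ dichotomy \eqref{xlogx_dichotomy}, finally letting $\delta\to 0$; the cost terms are $\delta(\lambda+\delta)$ and $m\beta(1-\mathrm e^{-\delta})$ rather than your eigenvalue penalty. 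You instead run a truncated second-moment (Paley--Zygmund) argument: prune the offspring at level $K$ so that $E[A(A-1)]<\infty$ and the many-to-two formula applies, confine the ancestral path to a corridor of width $w+1$ around the line $\lambda s$ so that first and second moments carry the same exponent $\gamma_w=\hat\beta_K-\tfrac{\lambda^2}{2}-\tfrac{\pi^2}{2(w+1)^2}$ (the condition $\lambda>\sqrt{2\hat\beta}$ making $\gamma_w<0$ so the pair integral is dominated by late splitting, whence $E[Y_t^2]\asymp EY_t$), and then send $w\to\infty$, $K\to\infty$. Each approach buys something: yours avoids the change of measure, the spine decomposition and even the $X\log X$ hypothesis (only $m<\infty$ is used, and monotonicity under pruning makes the truncation legitimate), which is a mild strengthening of the statement; the paper's proof avoids second moments altogether, so it needs no offspring truncation, no many-to-two lemma (which is not stated in the paper, though \cite{HR} would supply it) and no Dirichlet eigenvalue estimates, and it runs entirely on tools already set up for Proposition \ref{M_pm}. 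Two small points you should tidy up if writing your version in full: in the first-moment lower bound the Girsanov density $\mathrm e^{-\lambda\xi_{t+1}+\lambda^2(t+1)/2}$ is only controlled from below if you also cap the path (or at least $\xi_{t+1}$) from above on $[t,t+1]$, so extend the upper corridor wall to $[0,t+1]$ rather than waving at ``a bounded factor''; and the Paley--Zygmund step needs a matching upper bound $EY_t\le C(w)\mathrm e^{\gamma_w t}$, which follows from the same Girsanov-plus-eigenvalue estimate you use inside the second moment, but should be said.
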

\begin{proof}
For any choice of $\delta > 0$ and $K > 0$ consider the following events:
\[
S^{\lambda, t} := \Big\{ \exists u \in N_{t+1} \ : \ X^u_s > \lambda s \ \forall s \in [t, t+1], \ X^u_s \leq K + (\lambda + 2 \delta)s \ \forall s \in [0, t+1]  \Big\}
\]
and
\[
\tilde{S}^{\lambda, t} := \Big\{ \xi_s > \lambda s \ \forall s \in [t, t+1], \ \xi_s \leq K + (\lambda + 2 \delta)s \ \forall s \in [0, t+1] \Big\}.
\]
One can then see that $S^{\lambda, t} \in \mathcal{F}_{t+1} \subseteq \tilde{\mathcal{F}}_{t+1}$, $\tilde{S}^{\lambda, t} \in \mathcal{G}_{t+1} \subseteq \tilde{F}_{t+1}$ and that\newline 
$\tilde{S}^{\lambda, t} \subseteq S^{\lambda, t} \subseteq \big\{|\tilde{N}_t^{\lambda}| > 0 \big\} \subseteq \big\{|N_t^{\lambda t}| > 0 \big\}$.

We then have the following lower bound on $P(|\tilde{N}_t^{\lambda}| > 0)$:
\[
P \Big( |\tilde{N}_t^{\lambda}| > 0 \Big) \geq P \Big( S^{\lambda, t} \Big)
= E \Big( \mathbf{1}_{S^{\lambda, t}} \frac{M^{\lambda + \delta}_{t+1}}{M^{\lambda + \delta}_{t+1}} \Big)
= Q^{\lambda + \delta} \Big( \mathbf{1}_{S^{\lambda, t}} \frac{1}{M^{\lambda + \delta}_{t+1}}\Big)
= \tilde{Q}^{\lambda + \delta} \Big( \mathbf{1}_{S^{\lambda, t}} \frac{1}{M^{\lambda + \delta}_{t+1}}\Big), 
\]
where $M^{\lambda + \delta}$, $Q^{\lambda + \delta}$ and $\tilde{Q}^{\lambda + \delta}$ are the same as in Proposition \ref{M_lambda}. Then
\[
\tilde{Q}^{\lambda + \delta} \Big( \mathbf{1}_{S^{\lambda, t}} \frac{1}{M^{\lambda + \delta}_{t+1}}\Big) 
\geq \tilde{Q}^{\lambda + \delta} \Big( \mathbf{1}_{\tilde{S}^{\lambda, t}} \frac{1}{M^{\lambda + \delta}_{t+1}}\Big) 
\geq \tilde{Q}^{\lambda + \delta} \Big( \mathbf{1}_{\tilde{S}^{\lambda, t}} \frac{1}{\tilde{Q}^{\lambda + \delta} \big( M^{\lambda + \delta}_{t+1} \vert \tilde{\mathcal{G}}_\infty\big)}\Big) 
\]
using conditional Jensen inequality and the pull-through property of conditional expectation in the last inequality. 
We now recall that 
\[
\tilde{Q}^{\lambda + \delta} \Big( M^{\lambda + \delta}_{t+1} \big\vert \tilde{\mathcal{G}}_\infty\Big)
= spine(t+1) + \sum_{n=1}^{n_{t+1}} (A_n - 1)spine(S_n),
\]
where 
\[
spine(t) = \mathrm{e}^{(\lambda + \delta)\xi_t - \frac{1}{2}(\lambda + \delta)^2t - \hat{\beta}t}.
\]
On the event $\tilde{S}^{\lambda, t}$ we have that for all $s \in [0,t+1]$
\begin{align*}
spine(s) \leq &\exp \Big\{(\lambda + \delta) \big( K +(\lambda + 2\delta)s \big) - \frac{1}{2}(\lambda + \delta)^2s - \hat{\beta}s \Big\}\\ 
= &\mathrm{e}^{K(\lambda + \delta)} \exp \Big\{\big( \frac{1}{2}(\lambda + \delta)^2 + \delta(\lambda + \delta) - \hat{\beta}\big)s \Big\}\\
\leq &C_\delta \exp \Big\{\big( \frac{1}{2}(\lambda + \delta)^2 + \delta(\lambda + \delta) - \hat{\beta}\big)t \Big\},
\end{align*}
where $C_\delta$ is some positive constant. Also from dichotomy \eqref{xlogx_dichotomy} we know that $A_n < \mathrm{e}^{\delta n}$ eventually $\tilde{Q}^{\lambda + \delta}$-almost surely. Thus
\[
\sum_{n=1}^{n_{t+1}} A_n \leq \sum_{n =1}^{n_{t+1}} \mathrm{e}^{\delta n} + Y \leq C_\delta' \mathrm{e}^{\delta n_{t+1}} + Y,
\]
where $Y = \sum_{n=1}^\infty A_n \mathbf{1}_{\{A_n > \mathrm{e}^{\delta n}\}}$ is a $\tilde{Q}^{\lambda + \delta}$-almost surely finite random variable independent of $n_{t+1}$ and $(\xi_s)_{0 \leq s \leq t+1}$ and $C_\delta'$ is some positive constant. Thus 
\[
\tilde{Q}^{\lambda + \delta} \Big( M^{\lambda + \delta}_{t+1} \big\vert \tilde{\mathcal{G}}_\infty\Big) 
\leq C_\delta \mathrm{e}^{( \frac{1}{2}(\lambda + \delta)^2 + \delta(\lambda + \delta) - \hat{\beta})t} \big( 1 + Y + C_\delta' \mathrm{e}^{\delta n_{t+1}} \big).
\]
Then using the fact that $\frac{1}{a + b} \geq \frac{1}{2ab}$ whenever $a$, $b \geq 1$ we get
\begin{align*}
&\tilde{Q}^{\lambda + \delta} \Big( \mathbf{1}_{\tilde{S}^{\lambda, t}} \frac{1}{\tilde{Q}^{\lambda + \delta} \big( M^{\lambda + \delta}_{t+1} \vert \tilde{\mathcal{G}}_\infty\big)}\Big)\\ 
\geq &\frac{1}{C_\delta}\mathrm{e}^{\big(\hat{\beta} - \frac{1}{2}(\lambda + \delta)^2 - \delta(\lambda + \delta)\big)t} \ \tilde{Q}^{\lambda + \delta} \Big( \mathbf{1}_{\tilde{S}^{\lambda, t}} 
\frac{1}{1 + Y + C_\delta' \mathrm{e}^{\delta n_{t+1}}}\Big)\\
\geq &\frac{1}{2 C_\delta C_\delta'} \mathrm{e}^{\big(\hat{\beta} - \frac{1}{2}(\lambda + \delta)^2 - \delta(\lambda + \delta)\big)t} \tilde{Q}^{\lambda + \delta} \Big( \tilde{S}^{\lambda, t} \Big) \tilde{Q}^{\lambda + \delta} \Big( \mathrm{e}^{- \delta n_{t+1}} \Big) \tilde{Q}^{\lambda + \delta} \Big( \frac{1}{1 + Y} \Big).
\end{align*}
We note that $\tilde{Q}^{\lambda + \delta}\big(\frac{1}{1 + Y}\big) > 0$ since $Y$ is $\tilde{Q}^{\lambda + \delta}$-almost surely finite, $\tilde{Q}^{\lambda + \delta}(\mathrm{e}^{- \delta n_{t+1}}) = \mathrm{e}^{m \beta (t+1) (\mathrm{e}^{- \delta} - 1)}$ since $(n_t)_{t \geq 0}$ is a $\tilde{Q}^{\lambda + \delta}$-Poisson process with rate $m \beta$ and $\tilde{Q}^{\lambda + \delta} ( \tilde{S}^{\lambda, t}) \to C_{K, \delta}$ for some positive constant $C_{K, \delta}$ since $(\xi_t)_{t \geq 0}$ is a Brownian motion with drift $\lambda + \delta$ under $\tilde{Q}^{\lambda + \delta}$.
Therefore
\begin{align*}
\liminf_{t \to \infty} \frac{1}{t} \log P \Big( |\tilde{N}_t^{\lambda}| > 0 \Big) \geq 
&\liminf_{t \to \infty} \frac{1}{t} \tilde{Q}^{\lambda + \delta} \Big( \mathbf{1}_{\tilde{S}^{\lambda, t}} \frac{1}{\tilde{Q}^{\lambda + \delta} \big( M^{\lambda + \delta}_{t+1} \vert \tilde{\mathcal{G}}_\infty\big)}\Big)\\
\geq &\hat{\beta} - \frac{1}{2}(\lambda + \delta)^2 - \delta(\lambda + \delta) - m \beta(1 - \mathrm{e}^{-\delta})
\end{align*}
which proves the required result after letting $\delta \to 0$.
\end{proof}
\begin{Remark}
\label{remark_lower_bound}
Note that Proposition \ref{pre_lower1} and Proposition \ref{pre_lower2} (equation \eqref{eq_pre_lower1b}) already provide sufficient lower bounds for Lemma \ref{pop_lambda_as} (equations \eqref{subcrit} and \eqref{supercrit2} respectively) in the case $\lambda \geq \hat{\beta}_0$ since a spatially-homogeneous branching process can be embedded in a process with homogeneous and catalytic branching both present by simply not counting any particles born due to catalytic branching.
\end{Remark}

\section{Remaining proofs}
In this subsection we shall complete the proof of Lemma \ref{pop_lambda_as} by establishing lower bounds for equations \eqref{subcrit} and \eqref{supercrit2}. We shall then finish off the paper with the proof of Corollary \ref{cor_rightmost}.

\subsection{Heuristic argument}
Here we discuss the idea behind the proof in a non-rigorous way in order to help the reader understand the formal argument given in the next subsection.

Our task is to find the optimal way for a particle to reach level $\lambda t$ at some large time $t$. 

In the case of spatially-homogeneous branching ($\beta_0 = 0$) the birth rate along the path of a particle is independent of the path and so the optimal way would simply be to travel at speed $\lambda$ all the time (there are of course finer results available, but they are irrelevant to this  discussion).

However, in the presence of the catalyst at the origin travelling at speed $\lambda$ all the time might be disadvantageous as it will discard any contribution from the catalyst. Thus one might think that a better strategy for a  particle would first be to stay near the origin for some positive proportion of time in order to give birth to more particles at an accelerated rate (due to both homogeneous and catalytic branching potential) and then for the remaining time let its children travel at whatever speed necessary in order to reach the required level.

The argument goes as follows. For a large time $t$ we let 
\begin{equation*}
q := \left\{
\begin{array}{rl}
|N_t^{\lambda t}|& \text{if } \lambda < \lambda_{crit}, \\
P(|N_t^{\lambda t}| > 0)& \text{if } \lambda > \lambda_{crit} 
\end{array} \right.
\end{equation*}
and we want a lower bound on $q$.

We fix a number $p \in [0,1]$. As we know from Lemma \ref{population_as} at time $pt$ there are
\[ 
|N_{pt}| \approx \exp \Big\{ \big(\frac{\hat{\beta}_0^2}{2} + \hat{\beta} \big)pt \Big\}
\]
partices in the system and about a half of them lie in the upper-half plane. Next we ignore any catalytic branching that takes place between times $pt$ and $t$ 
by assuming that every particle $u \in N_{pt}^0$ starts an independent spatially-homogeneous branching process from the position $X^u_{pt} > 0$.

If we let 
\begin{equation*}
q(u) := \left\{
\begin{array}{rl}
|N_T^{\frac{\lambda}{1-p}T}(u)|& \text{if } \frac{\lambda}{1-p} < \sqrt{2 \hat{\beta}},\\
P(|N_T^{\frac{\lambda}{1-p}T}(u)| > 0)& \text{if } \frac{\lambda}{1-p} > \sqrt{2 \hat{\beta}},
\end{array} \right.
\end{equation*}
where $N_T^{\frac{\lambda}{1-p}T}(u)$ is the set of particles which lie above level $\frac{\lambda}{1-p}T$ 
at time $T$ of the spatially-homogeneous process initiated by $u$ in the time-space frame of this process 
and where $T = (1-p)t$ then from Propositions \ref{pre_lower1} and \ref{pre_lower2} we know that
\begin{align*}
q(u) &\gtrsim \exp \Big\{ \hat{\beta}T - \frac{1}{2} \Big( \frac{\lambda}{1-p}\Big)^2 T \Big\}\\
&= \exp \Big\{ \hat{\beta}(1-p)t - \frac{\lambda^2}{2(1-p)} t \Big\}.
\end{align*}
Then since every particle in $N_T^{\frac{\lambda}{1-p}T}(u)$ for every $u \in N_{pt}^0$ also 
belongs to $N_t^{\lambda t}$, we can estimate
\begin{align}
\label{q_estimate}
q &\gtrsim \sum_{u \in N_{pt}^0} q(u)\nonumber \\
&\approx |N_{pt}^0| \exp \Big\{ \hat{\beta}(1-p)t - \frac{\lambda^2}{2(1-p)} t \Big\} \nonumber \\
&\approx \exp \Big\{ \big( \hat{\beta} + \frac{\hat{\beta}_0^2}{2}p - \frac{\lambda^2}{2(1-p)} 
\big) t \Big\}.
\end{align}
The value of $p$ which maximises this expression is 
\begin{equation}
\label{p_star}
p^{\ast} := \left\{
\begin{array}{rl}
1 - \frac{\lambda}{\hat{\beta}_0}& \text{if } \lambda \leq \hat{\beta}_0,\\
0& \text{if } \lambda \geq \hat{\beta}_0.
\end{array} \right.
\end{equation}
Substituting this value of $p$ into \eqref{q_estimate} we get
\begin{align*}
q &\gtrsim \left\{
\begin{array}{rl}
\exp \big\{ \big(\hat{\beta} + \frac{\hat{\beta}_0^2}{2} - \hat{\beta}_0 \lambda \big) 
t\big\} & \text{if } \lambda \leq \hat{\beta}_0,\\
\exp \big\{ \big(\hat{\beta} - \frac{\lambda^2}{2} \big) t\big\}& \text{if } \lambda \geq \hat{\beta}_0
\end{array} \right.\\
&= \exp \{ \Delta_\lambda t \}
\end{align*}
which gives the lower bound on $q$ that we want. 

Note that if $\lambda$ is too large ($\lambda \geq \hat{\beta}_0$) then $p^\ast = 0$ and so the best 
strategy for a particle to reach level $\lambda t$ at time $t$ would indeed be to travel at speed $\lambda$ all the time being driven by homogeneous branching potential with negligible contribution from catalytic branching. This is consistent with Remark \ref{remark_lower_bound} made earlier.

\subsection{Lower bounds for \eqref{subcrit} and \eqref{supercrit2}}
Before we present the main body of the proof let us give a couple of preliminary results. The first one is a very crude estimate of the number of particles which approximately lie in the upper-half plane at a  large time $t$.
\begin{Proposition}
\label{positive_population}
Assume that condition \eqref{xlogx} on the offspring distribution is satisfied. Then $P$-almost surely for any $\delta > 0$ there exists a finite time $T_\delta$ such that for all $t \geq T_\delta$
\begin{equation}
\label{eq_positive_pop}
|N^{- \delta t}_t| \geq \exp \Big\{\big(\hat{\beta} + \frac{\hat{\beta}_0^2}{2} - c_\delta\big)t \Big\},
\end{equation}
where $c_\delta$ is some positive constant with the property that $c_\delta \to 0$ as $\delta \to 0$.
\end{Proposition}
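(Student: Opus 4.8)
The plan is to leverage the additive martingale $M^\pm_t=\sum_{u\in N_t}e^{-\hat\beta_0|X^u_t|-\frac12\hat\beta_0^2 t-\hat\beta t}$ of Proposition \ref{M_pm} — whose limit $M^\pm_\infty$ is strictly positive $P$-almost surely under \eqref{xlogx} — together with the growth rate of $|N_t|$ from Lemma \ref{population_as}. The elementary starting point is that, since $e^{-\hat\beta_0|x|}\le 1$ for every $x$,
\[
|N^{-\delta t}_t|=\#\{u\in N_t:X^u_t>-\delta t\}\ \ge\ \sum_{u\in N_t,\,X^u_t>-\delta t}e^{-\hat\beta_0|X^u_t|}.
\]
Since $\sum_{u\in N_t}e^{-\hat\beta_0|X^u_t|}=e^{(\frac12\hat\beta_0^2+\hat\beta)t}M^\pm_t$, and each particle with $X^u_t\le -\delta t$ contributes $e^{-\hat\beta_0|X^u_t|}=e^{\hat\beta_0 X^u_t}\le e^{-\hat\beta_0\delta t}$ (using $\hat\beta_0>0$), subtracting those particles gives
\[
|N^{-\delta t}_t|\ \ge\ e^{(\frac12\hat\beta_0^2+\hat\beta)t}M^\pm_t-e^{-\hat\beta_0\delta t}\,|N_t|.
\]

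Into this I would feed the two asymptotics. By Proposition \ref{M_pm}(a), $M^\pm_t\to M^\pm_\infty>0$ $P$-a.s., so $M^\pm_t\ge\tfrac12 M^\pm_\infty$ for all $t$ large; by the upper bound of Lemma \ref{population_as} (Proposition \ref{upper_pop}), for every $\eta>0$ one has $|N_t|\le e^{(\frac12\hat\beta_0^2+\hat\beta+\eta)t}$ eventually. Choosing $\eta=\hat\beta_0\delta/2$ makes the subtracted term at most $e^{(\frac12\hat\beta_0^2+\hat\beta-\hat\beta_0\delta/2)t}$, exponentially dominated by the leading term, so that $P$-a.s., for all $t$ large,
\[
|N^{-\delta t}_t|\ \ge\ \tfrac12 M^\pm_\infty e^{(\frac12\hat\beta_0^2+\hat\beta)t}-e^{(\frac12\hat\beta_0^2+\hat\beta-\hat\beta_0\delta/2)t}\ \ge\ \tfrac14 M^\pm_\infty e^{(\frac12\hat\beta_0^2+\hat\beta)t}.
\]
As $M^\pm_\infty$ is a fixed, $P$-a.s. finite and strictly positive random variable, the right-hand side exceeds $e^{(\frac12\hat\beta_0^2+\hat\beta-c_\delta)t}$ once $e^{c_\delta t}\ge 4/M^\pm_\infty$; collecting the finitely many ``eventually'' thresholds into a single $P$-a.s. finite $T_\delta$ then gives \eqref{eq_positive_pop}. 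In fact the argument works for any prescribed $c_\delta>0$, say $c_\delta=\delta$ (handling general $\delta>0$ via the monotonicity of $\delta\mapsto|N^{-\delta t}_t|$ and a countable exhaustion), so the requirement $c_\delta\to0$ as $\delta\to0$ is automatic.

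I do not expect a genuine obstacle here: the estimate is short and all of the real work sits in Propositions \ref{M_pm} and \ref{upper_pop}, which are already available. The one point needing a little care is that the subtraction above exploits $\hat\beta_0>0$ to make the far-left particles cheap. If one wished to sidestep that, the reflection symmetry $x\mapsto-x$ of the model gives that $\#\{u\in N_t:X^u_t\le-\delta t\}$ has, as a process in $t$, the same law as $\#\{u\in N_t:X^u_t\ge\delta t\}$, which is pathwise $\le|N^{\delta t/2}_t|$; then the already-proven upper bound of Lemma \ref{pop_lambda_as} (Proposition \ref{upper_pop_lambda}) at speed $\delta/2$ controls the latter by $e^{(\Delta_{\delta/2}+o(1))t}$ with $\Delta_{\delta/2}=\tfrac12\hat\beta_0^2+\hat\beta-\hat\beta_0\delta/2$, and subtracting from $|N_t|$ once more yields \eqref{eq_positive_pop} (now with, e.g., $c_\delta=\hat\beta_0\delta/4$), after observing that it suffices to treat small $\delta$.
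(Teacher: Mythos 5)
Your main argument is correct, and it takes a genuinely different route from the paper. The paper proves \eqref{eq_positive_pop} by a counting subtraction at the level of the two almost-sure asymptotics already established: it writes $\Delta_\delta=\hat\beta+\tfrac12\hat\beta_0^2-c_\delta'$, takes the lower bound $|N_t|\ge e^{(\hat\beta+\hat\beta_0^2/2-c_\delta'/4)t}$ from Lemma \ref{population_as}, bounds $|N_t|-|N_t^{-\delta t}|$ (the particles at or below $-\delta t$) via reflection symmetry by the upper bound $|N_t^{\delta t}|\le e^{(\Delta_\delta+c_\delta'/2)t}$ of Proposition \ref{upper_pop_lambda}, and subtracts; this is essentially the fallback you sketch in your last paragraph. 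Your primary route instead works directly with the additive martingale: since each particle left of $-\delta t$ contributes at most $e^{-\hat\beta_0\delta t}$ to $\sum_u e^{-\hat\beta_0|X^u_t|}=e^{(\hat\beta+\hat\beta_0^2/2)t}M^\pm_t$, the bound $|N_t^{-\delta t}|\ge e^{(\hat\beta+\hat\beta_0^2/2)t}M^\pm_t-e^{-\hat\beta_0\delta t}|N_t|$ together with $M^\pm_\infty>0$ (Proposition \ref{M_pm}) and the crude population upper bound (Proposition \ref{upper_pop}) finishes the proof. This buys two things: it bypasses Proposition \ref{upper_pop_lambda} and the symmetry step entirely, and it yields the stronger conclusion that $e^{-(\hat\beta+\hat\beta_0^2/2)t}|N_t^{-\delta t}|$ stays bounded below by a positive random variable, so any prescribed $c_\delta\to 0$ works (with $T_\delta$ random, which the statement permits); your handling of the uniformity in $\delta$ via monotonicity of $\delta\mapsto|N_t^{-\delta t}|$ is fine. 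The only caveat, which you flag yourself, is that the exponential cheapness of far-left particles uses $\hat\beta_0>0$ (i.e. $m_0>1$); this is the paper's standing (implicit) assumption, and your symmetry-based variant covers it in any case, coinciding with the paper's own proof up to the harmless replacement of level $\delta t$ by $\delta t/2$.
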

\begin{proof}
Let us observe that $\Delta_\delta \to \hat{\beta} + \frac{\hat{\beta}_0^2}{2}$ as $\delta \to 0$. So we may write $\Delta_\delta = \hat{\beta} + \frac{\hat{\beta}_0^2}{2} - c_\delta'$ for some $c_\delta' > 0$ such that $c_\delta' \to 0$ as $\delta \to 0$.

From Lemma \ref{population_as} (or Proposition \ref{lower_pop}) we know that $P$-almost surely for any $\delta > 0$ there exists a finite time $T_\delta'$ such that for all $t \geq T_\delta'$
\begin{equation}
\label{eq_positive_pop1}
|N_t| \geq \exp \Big\{\big(\hat{\beta} + \frac{\hat{\beta}_0^2}{2} - \frac{1}{4}c_\delta' \big)t \Big\}.
\end{equation}
We also know from Proposition \ref{upper_pop_lambda} (equation \eqref{subcrit_upper}) that $P$-almost surely for any $\delta > 0$ there exists a finite time $T_\delta''$ such that for all $t \geq T_\delta''$
\[
|N^{\delta t}_t| \leq \exp \Big\{\big(\Delta_\delta + \frac{1}{2} c_\delta' \big)t \Big\} = 
\exp \Big\{ \big(\hat{\beta} + \frac{\hat{\beta}_0^2}{2} - \frac{1}{2}c_\delta' \big)t \Big\}.
\]
Thus by symmetry it is also true that $P$-almost surely for any $\delta > 0$ there exists a finite time $T_\delta'''$ such that for all $t \geq T_\delta'''$
\begin{equation}
\label{eq_positive_pop2}
|N_t| - |N^{- \delta t}_t| \leq \exp \Big\{ \big(\hat{\beta} + \frac{\hat{\beta}_0^2}{2} - \frac{1}{2}c_\delta' \big)t \Big\}.
\end{equation}
Subtracting \eqref{eq_positive_pop2} from \eqref{eq_positive_pop1} yields the result.
\end{proof}
The next result is basically a version of Chebyshev's inequality.
\begin{Proposition}
\label{chebyshev}
Let $N$ be a random variable supported on $\mathbb{N}$ and $(S_k)_{k \geq 1}$ a sequence of events independent of each other conditional on $N$. If for some $r \in (0, 1)$ it is true that $\mathbb{P}(S_k|N) \geq r$ $\mathbb{P}$-a.s. for all $k \geq 1$ then
\[
\mathbb{P} \Big( \sum_{k = 1}^N \mathbf{1}_{S_k} \leq \frac{r}{2}N \ \Big\vert N \Big)
\leq \frac{4}{rN} \qquad \mathbb{P}-a.s.
\]
\end{Proposition}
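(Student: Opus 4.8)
The plan is to condition on the value of $N$ and run the standard second-moment (Chebyshev) argument on each atom $\{N=n\}$ separately. Fix an $n$ in the support of $N$ and work throughout under the conditional law $\mathbb{P}(\,\cdot\mid N=n)$. Put $X:=\sum_{k=1}^{n}\mathbf{1}_{S_k}$ and $p_k:=\mathbb{P}(S_k\mid N=n)$, so that $p_k\ge r$ by hypothesis. Since the $(S_k)$ are conditionally independent given $N$, the indicators $\mathbf{1}_{S_1},\dots,\mathbf{1}_{S_n}$ are independent Bernoulli random variables under $\mathbb{P}(\,\cdot\mid N=n)$; hence $\mu:=\mathbb{E}[X\mid N=n]=\sum_{k=1}^{n}p_k\ge rn$ and, crucially, $\mathrm{Var}(X\mid N=n)=\sum_{k=1}^{n}p_k(1-p_k)\le\sum_{k=1}^{n}p_k=\mu$.

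Next I would use $\mu\ge rn$ to note that $\tfrac{r}{2}n\le\tfrac12\mu$, so on the atom $\{N=n\}$ we have the inclusions
\[
\{X\le\tfrac{r}{2}n\}\ \subseteq\ \{X\le\tfrac12\mu\}\ \subseteq\ \{|X-\mu|\ge\tfrac12\mu\},
\]
and Chebyshev's inequality together with $\mathrm{Var}(X\mid N=n)\le\mu$ gives
\[
\mathbb{P}\big(X\le\tfrac{r}{2}n \,\big|\, N=n\big)\ \le\ \frac{\mathrm{Var}(X\mid N=n)}{(\mu/2)^2}\ =\ \frac{4\,\mathrm{Var}(X\mid N=n)}{\mu^2}\ \le\ \frac{4\mu}{\mu^2}\ =\ \frac{4}{\mu}\ \le\ \frac{4}{rn}.
\]
Since this bound holds for every $n$ in the countable support of $N$, it holds $\mathbb{P}$-a.s.\ with $n$ replaced by $N$, which is exactly the assertion. (There is no division by zero: $N$ is $\mathbb{N}$-valued and $r\in(0,1)$, so $\mu\ge rn\ge r>0$.)

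There is no genuine obstacle here. The only points to get right are that conditional independence of the $(S_k)$ given $N$ really does make $X$ a sum of independent Bernoulli variables on each atom $\{N=n\}$, so that variances add, and the elementary bound $\mathrm{Var}(X)\le\mathbb{E}[X]$ valid for such a sum --- it is exactly this bound that yields the decaying factor $1/N$ on the right-hand side rather than a universal constant.
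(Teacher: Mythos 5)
Your proposal is correct and is essentially the paper's own argument: both reduce the event $\{\sum_{k\le N}\mathbf{1}_{S_k}\le \tfrac r2 N\}$ to a deviation of size at least half the conditional mean, apply the conditional Chebyshev inequality using conditional independence to sum the variances, and use $\mathrm{Var}(\mathbf{1}_{S_k}\mid N)\le\mathbb{P}(S_k\mid N)$ to get the $4/(rN)$ bound. Working on the atoms $\{N=n\}$ rather than directly with conditional expectations given $N$ is only a cosmetic difference (legitimate here since $N$ is $\mathbb{N}$-valued).
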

Sharper inequalities are of course available but are excessive to us.
\begin{proof}
\begin{align*}
\mathbb{P} \Big( \sum_{k = 1}^N \mathbf{1}_{S_k} \leq \frac{r}{2}N \ \Big\vert N \Big) 
\leq &\mathbb{P} \Big( \sum_{k = 1}^N \big( \mathbf{1}_{S_k} - \mathbb{P}(S_k|N) \big) \leq - \frac{1}{2}
\sum_{k = 1}^N \mathbb{P}(S_k|N) \ \Big\vert N \Big) \\
\leq &\mathbb{P} \Big( \big\vert \sum_{k = 1}^N \big( \mathbf{1}_{S_k} - \mathbb{P}(S_k|N) \big) \big\vert \geq \frac{1}{2} \sum_{k = 1}^N \mathbb{P}(S_k|N) \ \Big\vert N \Big)\\
\leq &\frac{\sum_{k=1}^N var(\mathbf{1}_{S_k}|N)}{\frac{1}{4} \big(\sum_{k = 1}^N \mathbb{P}(S_k|N) \big)^2} \leq \frac{4}{rN} \qquad \mathbb{P}-a.s.
\end{align*}
using conditional Markov inequality and the fact that $var(\mathbf{1}_{S_k}|N) = \mathbb{P}(S_k|N) - \mathbb{P}(S_k|N)^2 \leq \mathbb{P}(S_k|N)$.
\end{proof}
\begin{Proposition}[Lower bounds for Lemma \ref{pop_lambda_as}]
\label{lower_pop_lambda} Suppose that condition \eqref{xlogx} on the offspring distribution is satisfied. 

\noindent If $\lambda < \lambda_{crit}$ ($\Delta_\lambda > 0$) then 
\begin{equation}
\label{subcrit_lower}
\liminf_{t \to \infty} \frac{1}{t} \log |N_t^{\lambda t}| \geq \Delta_\lambda 
\qquad P \text{-a.s.}
\end{equation}
If $\lambda > \lambda_{crit}$ ($\Delta_\lambda < 0$) then 
\begin{equation}
\label{supercrit_lower}
\liminf_{t \to \infty} \frac{1}{t} \log P \big(|N_t^{\lambda t}| > 0 \big) \geq \Delta_\lambda.
\end{equation}
\end{Proposition}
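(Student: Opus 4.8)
The plan is to make the two-phase heuristic of Subsection 3.1 rigorous. By Remark \ref{remark_lower_bound} it suffices to treat $\lambda < \hat{\beta}_0$, for which the optimal split in \eqref{p_star} is $p^* := 1 - \lambda/\hat{\beta}_0 \in (0,1)$. Fix a small $\delta > 0$ and put $\mu = \mu(\delta) := (\lambda + \delta p^*)/(1-p^*)$, so that $\mu \to \hat{\beta}_0$ as $\delta \to 0$; in particular for $\delta$ small $\mu < \sqrt{2\hat{\beta}}$ when $\hat{\beta} > \tfrac12\hat{\beta}_0^2$, and $\mu > \sqrt{2\hat{\beta}}$ when $\hat{\beta} < \tfrac12\hat{\beta}_0^2$. \emph{Phase 1:} run the process to time $p^* n$ (for integer $n$). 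By Proposition \ref{positive_population}, $P$-a.s. for all large $n$ the set $N^{-\delta p^* n}_{p^* n}$ has at least $\mathcal{N}_n := \exp\{(\hat{\beta} + \tfrac12\hat{\beta}_0^2 - c_\delta)p^* n\}$ elements. Conditionally on $\mathcal{F}_{p^* n}$, each such $u$ (at position $X^u_{p^* n} > -\delta p^* n$) starts an independent copy of the branching process, inside which -- by the embedding of Remark \ref{remark_lower_bound}, retaining one offspring at every catalytic fission -- sits a purely homogeneous BBM; run it for a further time $(1-p^*)n$ and let $q(u)$ be the number of its particles lying, relative to $X^u_{p^* n}$, above $\mu(1-p^*)n = (\lambda + \delta p^*)n$. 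Since $X^u_{p^* n} > -\delta p^* n$, every such particle then lies above $\lambda n$ in the original frame, so $|N^{\lambda n}_n| \geq \sum_u q(u)$.

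\emph{Phase 2:} fix a small $\epsilon > 0$ and set $S_u := \{q(u) \geq V_n\}$, where $V_n := \exp\{(\hat{\beta} - \tfrac12\mu^2 - \epsilon)(1-p^*)n\}$ if $\hat{\beta} > \tfrac12\hat{\beta}_0^2$ and $V_n := 1$ otherwise. Applying Proposition \ref{pre_lower1} in the first case and Proposition \ref{pre_lower2} (equation \eqref{eq_pre_lower1b}) in the second to the homogeneous sub-process -- legitimate for our $\mu$, and insensitive to the sub-process's starting point -- gives, for all large $n$, $P(S_u \mid \mathcal{F}_{p^* n}) \geq r_n$ with $r_n = \tfrac12$ in the first case and $r_n = \exp\{(\hat{\beta} - \tfrac12\mu^2 - \epsilon)(1-p^*)n\}$ in the second; in both cases $V_n r_n \geq \tfrac12\exp\{(\hat{\beta} - \tfrac12\mu^2 - \epsilon)(1-p^*)n\}$. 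Conditionally on $\mathcal{F}_{p^* n}$ the events $(S_u)_{u \in N^{-\delta p^* n}_{p^* n}}$ are independent, so Proposition \ref{chebyshev}, applied with $\mathcal{F}_{p^* n}$ in place of $\sigma(N)$ (its proof carries over verbatim), gives on $\{|N^{-\delta p^* n}_{p^* n}| \geq \mathcal{N}_n\}$
\[
P\Big( \#\{u : S_u\} \leq \tfrac{r_n}{2}\mathcal{N}_n \,\Big|\, \mathcal{F}_{p^* n}\Big) \leq \frac{4}{r_n \mathcal{N}_n}.
\]
A direct computation -- and this is exactly what the choice $p^* = 1 - \lambda/\hat{\beta}_0$ achieves -- shows the exponent of $V_n r_n \mathcal{N}_n$ converges to $\Delta_\lambda$ as $\delta, \epsilon \to 0$, while $r_n \mathcal{N}_n$ grows exponentially whenever $\Delta_\lambda > 0$ (trivially in the first case, where $r_n = \tfrac12$).

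\emph{Conclusion.} If $\Delta_\lambda > 0$, then $\sum_n (r_n\mathcal{N}_n)^{-1} < \infty$, so Borel--Cantelli together with the Phase 1 a.s. bound give $|N^{\lambda n}_n| \geq V_n \tfrac{r_n}{2}\mathcal{N}_n$ for all large $n$, $P$-a.s.; hence $\liminf_{n \in \mathbb{N}} \tfrac1n\log|N^{\lambda n}_n| \geq \Delta_\lambda$ for every $\lambda \in (0, \hat{\beta}_0)$ after letting $\delta, \epsilon \to 0$. To pass to real $t$, fix $\lambda < \lambda_{crit}$ and $\epsilon_0 > 0$ small (the case $\lambda \geq \hat{\beta}_0$ being covered by Remark \ref{remark_lower_bound}, assume $\lambda + \epsilon_0 < \hat{\beta}_0$): a particle above $(\lambda + \epsilon_0)n$ at an integer time $n$ keeps a descendant above $\lambda t$ throughout $[n, n+1]$ unless one of its lineages moves down by at least $\epsilon_0 n - \lambda$ within unit time, of probability $\leq \exp\{-c\epsilon_0^2 n^2\}$ per lineage; a union bound over the at most $\exp\{Cn\}$ such particles and Borel--Cantelli remove this eventually, so $|N^{\lambda t}_t| \geq |N^{(\lambda+\epsilon_0)n}_n|$ for $t \in [n, n+1]$ eventually, and $\epsilon_0 \to 0$ with continuity of $\lambda\mapsto\Delta_\lambda$ yields \eqref{subcrit_lower}. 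If instead $\Delta_\lambda < 0$ (which forces $\hat{\beta} < \tfrac12\hat{\beta}_0^2$, so $V_n = 1$ and $\mu > \sqrt{2\hat{\beta}}$), then $r_n\mathcal{N}_n \to 0$ and, by conditional independence, $P(|N^{\lambda n}_n| > 0 \mid \mathcal{F}_{p^* n}) \geq 1 - (1 - r_n)^{\mathcal{N}_n} \geq \tfrac12 r_n\mathcal{N}_n$ once $r_n\mathcal{N}_n \leq 1$; taking expectations against the Phase 1 event (probability $\to 1$) gives $P(|N^{\lambda n}_n| > 0) \geq \exp\{(\Delta_\lambda - o_{\delta,\epsilon}(1))n\}$. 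Re-running with rate $\lambda+\epsilon_0$ and noting that the leftmost particle above $(\lambda+\epsilon_0)n$ keeps a descendant above $\lambda t$ on $[n,n+1]$ with conditional probability $1 - o(1)$ given $\mathcal{F}_n$ extends this to real $t$, and $\delta,\epsilon,\epsilon_0 \to 0$ yields \eqref{supercrit_lower}.

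The hard part is Phase 2: since the number of sub-processes is itself $\exp\{\Theta(n)\}$, the almost-sure conclusions of Propositions \ref{pre_lower1} and \ref{pre_lower2} cannot be invoked for all of them simultaneously, and the purpose of working at the \emph{fixed} time $(1-p^*)n$ with the crude second-moment bound of Proposition \ref{chebyshev} (conditionally on $\mathcal{F}_{p^* n}$) is precisely to turn ``each sub-process succeeds with probability $\geq r_n$'' into ``a $\Theta(r_n)$-fraction of them succeed, with conditional failure probability summable in $n$''. What then remains is the bookkeeping of verifying that the slacks $\delta$ (vertical room at time $p^* n$, which also shifts $\mu$ and $c_\delta$), $\epsilon$ (sub-process rates), and $\epsilon_0$ (head-start for the real-time extension) can all be sent to $0$ while keeping the resulting exponent equal to $\Delta_\lambda$.
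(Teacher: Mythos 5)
Your proposal is correct and follows essentially the same route as the paper: Phase 1 is the paper's use of Proposition \ref{positive_population} at time $p^*n$ with a $\delta$-margin, and Phase 2 is the paper's application of Propositions \ref{pre_lower1} and \ref{pre_lower2} to the (catalysis-discarded) subtrees, combined with conditional independence, Proposition \ref{chebyshev} and Borel--Cantelli, with the same split according to whether $\hat{\beta}$ exceeds $\tfrac12\hat{\beta}_0^2$ and the same direct probability estimate in the supercritical case. The only difference is cosmetic: the paper handles non-integer times by building a unit time window into the subtree events (this is why Proposition \ref{pre_lower2} is stated for $\tilde{N}^{\lambda}_t$), whereas you prove the integer-time bound at the steeper slope $\lambda+\epsilon_0$ and interpolate via a Gaussian-tail union bound, which works equally well.
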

\begin{proof}
We let $p = p^\ast$ be the same as in \eqref{p_star}. We note that $p = 0$ if and only if $\lambda \geq \hat{\beta_0}$ while if $\lambda \geq \hat{\beta_0}$ then $\Delta_\lambda = \hat{\beta} - \frac{1}{2}\lambda^2$ so that \eqref{subcrit_lower} and \eqref{supercrit_lower} follow from Proposition \ref{pre_lower1} and Proposition \ref{pre_lower2} (equation \eqref{eq_pre_lower1b}) by simply not counting any particles born due to catalytic branching. Thus for the rest of the proof we shall assume that $\lambda < \hat{\beta}_0$ so that $p = 1 - \frac{\lambda}{\hat{\beta}_0} > 0$ and $\Delta_\lambda = \hat{\beta} + \frac{\hat{\beta}_0^2}{2} - \hat{\beta}_0 \lambda$. 

We then choose some $\delta > 0$ and define
\begin{equation}
\label{lambda_hat}
\hat{\lambda} := \frac{\lambda + \delta}{\lambda} \hat{\beta}_0.
\end{equation}
We also define 
\[
f(\delta) := \big( \hat{\beta} - \frac{\hat{\lambda}^2}{2} - \delta \big),
\]
\[
g(\delta) := \big( \hat{\beta} + \frac{\hat{\beta}_0^2}{2} - c_\delta \big),
\]
where $c_\delta$ is the same as in Proposition \ref{positive_population}. We let $h(\delta)$ be such that 
\begin{align*}
(1-p)f(\delta) + pg(\delta) = &\big( \hat{\beta} - \frac{\hat{\lambda}^2}{2} - \delta \big)(1-p) + 
\big( \hat{\beta} + \frac{\hat{\beta}_0^2}{2} - c_\delta \big)p\\
= &\hat{\beta} + \frac{\hat{\beta}_0^2}{2} \big( 1 - \frac{\lambda}{\hat{\beta}_0} \big) - c_\delta p
- \frac{1}{2} \Big( \frac{\lambda + \delta}{\lambda} \hat{\beta}_0 \Big)^2 \frac{\lambda}{\hat{\beta}_0} - \delta(1-p)\\
= &\Delta_\lambda - h(\delta).
\end{align*}
Note that $h(\delta)>0$ and $h(\delta) \to 0$ as $\delta \to 0$.

For $t>0$ we define events 
\begin{equation}
\label{eq_A}
A_t := \Big\{ |N_{pt}^{- \delta pt}| \geq \mathrm{e}^{g(\delta) pt} \Big\}.
\end{equation}
From Proposition \ref{positive_population} we know that $P(A_n \ ev.) = 1$ so that in particular $P(A_t) \to 1$ as $t \to \infty$. 

Finally, for every particle $u \in N_{pt}$ in the time-space frame of the subtree initiated by $u$ we define $N_s(u)$ to be the set of particles in this subtree at time $s$ with $Y_s^v$ positions of particles $v \in N_s(u)$ at time $s$. Moreover, by analogy with \eqref{set_ntx} and \eqref{set_ntl} we define
\[
N_s^x(u) := \Big\{ v \in N_s(u) \ : \ Y_s^v > x\Big\}
\]
and
\[
\tilde{N}_s^l(u) := \Big\{ v \in N_{s+1}(u) \ : \ Y_r^v > lr \ \forall r \in [s, s+1]\Big\}.
\]

\underline{Proof of the lower bound for \eqref{subcrit}}

Assume that $\lambda < \lambda_{crit}$ so that $\Delta_\lambda > 0$. There are two cases to consider which require slightly different treatment.

\textbf{Case 1:} $\hat{\beta}_0 < \sqrt{2 \hat{\beta}}$.

We choose $\delta > 0$ to be sufficiently small that $\hat{\lambda} < \sqrt{2 \hat{\beta}}$ and for $n \geq 1$ consider events
\[
B_n := \Big\{ \sum_{u \in N_{pn}^{- \delta pn}} \mathbf{1}_{ B_n(u)} < \frac{1}{4} |N_{pn}^{- \delta pn}| \Big\},
\]
where for every $u \in N_{pn}^{- \delta pn}$
\[
B_n(u) = \Big\{ |N_s^{\hat{\lambda}s}(u)| \geq \mathrm{e}^{ f(\delta) s } \ \text{ for all } s \in \big[(1-p)n, (1-p)n + 1\big] \Big\}.
\]
We know that conditional on $\mathcal{F}_{pn}$ events $B_n(u)$ are independent (since all the subtrees initiated by particles $u \in N_{pn}$ are independent copies of the original branching process started from positions $X^u_{pn}$). 

Moreover, if we ignore all the catalytic branching taking place in the subtrees initiated by particles $u \in N_{pn}^{- \delta p n}$ then we can get from Proposition \ref{pre_lower1} that there exists some deterministic $n_0$ such that for all $n \geq n_0$, $P(B_n(u) | \mathcal{F}_{pn}) \geq \frac{1}{2}$. Hence by Proposition \ref{chebyshev} $P(B_n | \mathcal{F}_{pn}) \leq \frac{8}{|N_{pn}^{- \delta pn}|}$ $P$-a.s. for all $n \geq n_0$. Then for all $n \geq n_0$ we get that
\[
P\Big(A_n \cap B_n\Big) = E \Big( E \big( \mathbf{1}_{A_n} \mathbf{1}_{B_n} \big\vert \mathcal{F}_{pn}\big) \Big) \leq E \Big( \mathbf{1}_{A_n} \frac{8}{|N_{pn}^{- \delta pn}|} \Big) \leq 
8 \mathrm{e}^{- g(\delta) pn}
\]
which decays exponentially fast in $n$ (for $\delta$ sufficiently small). Therefore $P(A_n \cap B_n \text{ i.o.}) = 0$. Then since $P(A_n \text{ ev.}) = 1$ it follows that $P(A_n \cap B_n^c \text{ ev.}) = 1$. So  $P$-a.s. for all $n$ large enough 
\[
\sum_{u \in N_{pn}^{- \delta pn}} \mathbf{1}_{ B_n(u)} \geq \frac{1}{4} |N_{pn}^{- \delta pn}| 
\geq \frac{1}{4} \mathrm{e}^{g(\delta)pn}.
\]
Then noting that for all $t \in [n, n+1]$
\[
|N_t^{\lambda t}| \geq \sum_{u \in N_{pn}^{- \delta pn}} \mathbf{1}_{ B_n(u)} \mathrm{e}^{f(\delta)(1-p)n}
\]
we get that $P$-a.s. for all $t$ sufficiently large
\[
|N_t^{\lambda t}| \geq K \mathrm{e}^{g(\delta)pt + f(\delta)(1-p)t},
\]
where $K$ is some positive constant. Hence 
\[
\liminf_{t \to \infty} \frac{1}{t} \log |N_t^{\lambda t}| \geq \Delta_\lambda - h(\delta) \qquad P \text{-a.s.,}
\]
which yields the required value after letting $\delta \to 0$.

\textbf{Case 2:} $\hat{\beta}_0 \geq \sqrt{2 \hat{\beta}}$ (so that $\hat{\lambda} > \sqrt{2\hat{\beta}}$).

For $n \geq 1$ we consider events
\[
C_n := \Big\{ \sum_{u \in N_{pn}^{- \delta pn}} \mathbf{1}_{\big\{ |\tilde{N}_{(1-p)n}^{\hat{\lambda}}(u)| > 0\big\}} < \frac{1}{2} \mathrm{e}^{f(\delta)(1-p)n} |N_{pn}^{- \delta pn}|\Big\}.
\]
We know that conditional on $\mathcal{F}_{pn}$ events $\{|\tilde{N}^{\hat{\lambda}}_{(1-p)n}(u)|>0\}$ are independent. Moreover, if we ignore all the catalytic branching taking place in the subtrees initiated by particles $u \in N_{pn}^{- \delta p n}$ then we can get from Proposition \ref{pre_lower2} that there exists some deterministic $n_0$ such that for all $n \geq n_0$, $P(|\tilde{N}^{\hat{\lambda}}_{(1-p)n}(u)|>0 | \mathcal{F}_{pn}) \geq \mathrm{e}^{f(\delta)(1-p)n}$. Hence by Proposition \ref{chebyshev} for all $n \geq n_0$
\[
P\big(C_n | \mathcal{F}_{pn}\big) \leq \frac{4}{|N_{pn}^{- \delta pn}|} \mathrm{e}^{-f(\delta)(1-p)n} \qquad P \text{-a.s.}
\]
Then for all $n \geq n_0$ we get that
\begin{align*}
P\Big(A_n \cap C_n\Big) = &E \Big( E \big( \mathbf{1}_{A_n} \mathbf{1}_{C_n} \big\vert \mathcal{F}_{pn}\big) \Big)
\leq E \Big( \mathbf{1}_{A_n} \frac{4}{|N_{pn}^{- \delta pn}|} 
\mathrm{e}^{-f(\delta)(1-p)n}\Big)\\
\leq &4\mathrm{e}^{-g(\delta)p -f(\delta)(1-p)}
= 4 \mathrm{e}^{ -(\Delta_\lambda - h(\delta) ) n},
\end{align*}
which decays exponentially fast in $n$ (for $\delta$ chosen sufficiently small). Therefore $P(A_n \cap C_n \text{ i.o.}) = 0$. Then since $P(A_n \text{ ev.}) = 1$ it follows that $P(A_n \cap C_n^c \text{ ev.}) = 1$. So  $P$-a.s. for all $n$ large enough 
\[
\sum_{u \in N_{pn}^{- \delta pn}} \mathbf{1}_{\big\{ |\tilde{N}_{(1-p)n}^{\hat{\lambda}}(u)| > 0\big\}} \geq \frac{1}{2} \mathrm{e}^{f(\delta)(1-p)n} |N_{pn}^{- \delta pn}|
\geq 4 \mathrm{e}^{(\Delta_\lambda - h(\delta) )n}.
\]
Then noting that for all $t \in [n, n+1]$
\[
|N_t^{\lambda t}| \geq \sum_{u \in N_{pn}^{- \delta pn}} \mathbf{1}_{ \big\{ |\tilde{N}_{(1-p)n}^{\hat{\lambda}}(u)| > 0\big\}} 
\]
we get that 
\[
\liminf_{t \to \infty} \frac{1}{t} \log |N_t^{\lambda t}| \geq \Delta_\lambda - h(\delta) \qquad P 
\text{-a.s.}
\]
which yields the required result after letting $\delta \to 0$.

\underline{Proof of the lower bound for \eqref{supercrit2}}

Assume that $\lambda > \lambda_{crit}$ so that $\Delta_\lambda < 0$. Then necessarily 
$\hat{\lambda} > \hat{\beta}_0 > \lambda > \lambda_{crit} \geq \sqrt{2 \hat{\beta}}$. 

We note that
\[
P \Big( |N_t^{\lambda t}| > 0\Big) \geq 
P \Big( \bigcup_{u \in N_{pt}^{- \delta pt}} \big\{ |N^{\hat{\lambda}t}_{(1-p)t}(u)| > 0 \big\} \ , \ |N_{pt}^{- \delta pt}| \geq \mathrm{e}^{g(\delta)pt} \Big).
\]
We know that conditional on $\mathcal{F}_{pt}$ events $\{|N^{\hat{\lambda}t}_{(1-p)t}(u)|>0\}$ are independent. Moreover, if we ignore all the catalytic branching taking place in the subtrees initiated by particles $u \in N_{pt}^{- \delta p t}$ then we can get from Proposition \ref{pre_lower2} (equation \eqref{eq_pre_lower1b}) that there exists some deterministic $t_0$ such that for all $t \geq t_0$, $P \big(|N^{\hat{\lambda}t}_{(1-p)t}(u)|>0 \ | \mathcal{F}_{pt} \big) \geq \mathrm{e}^{f(\delta)(1-p)t}$. Hence 
\begin{align*}
&P \Big( \bigcup_{u \in N_{pt}^{- \delta pt}} \big\{ |N^{\hat{\lambda}t}_{(1-p)t}(u)| > 0 \big\}, \ |N_{pt}^{- \delta pt}| \geq \mathrm{e}^{g(\delta)pt} \Big)\\
= &E \Big( \mathbf{1}_{\big\{|N_{pt}^{- \delta pt}| \geq \mathrm{e}^{g(\delta)pt}\big\}} \Big[ 1 - \prod_{u \in N_{pt}^{- \delta pt}} \big( 1 - 
P(|N^{\hat{\lambda}t}_{(1-p)t}(u)|>0 | \mathcal{F}_{pt})\big)\Big]\Big)\\
\geq &P \Big( |N_{pt}^{- \delta pt}| \geq \mathrm{e}^{g(\delta)pt} \Big) \Big[ 1 - \Big( 1 - \mathrm{e}^{f(\delta)(1-p)t} \Big)^{\mathrm{e}^{g(\delta )pt}} \Big]\\
\geq &P \Big( |N_{pt}^{- \delta pt}| \geq \mathrm{e}^{g(\delta)pt} \Big) \mathrm{e}^{(\Delta_\lambda - h(\delta))t}
\end{align*}
for all $t$ large enough and where in the last inequality we have used the fact that for any $a \in (0,1)$ and $b > 1$ such that $ab < 1$ it is true that $(1-a)^b \leq 1 - ab$. Then noting that as $t \to \infty$
\[
P \Big( |N_{pt}^{- \delta pt}| \geq \mathrm{e}^{g(\delta)pt} \Big) \to 1 \qquad P \text{-a.s.}
\]
we get that 
\[
\liminf_{t \to \infty} \frac{1}{t} \log P \big(|N_t^{\lambda t}| > 0 \big) \geq \Delta_\lambda - h(\delta)
\]
which yields the required result after letting $\delta \to 0$.

\end{proof}

\subsection{Proof of Corollary \ref{cor_rightmost} and final remarks}
\begin{proof}[Proof of Corollary \ref{cor_rightmost}]
Assume that condition \eqref{xlogx} is satisfied. Then for any $\lambda < \lambda_{crit}$, as we know from \eqref{subcrit}, $|N_t^{\lambda t}| > 0$ for all $t$ large enough $P$-almost surely. So for all $t$ large enough $R_t \geq \lambda t$ $P$-almost surely. Thus $\liminf_{t \to \infty} \frac{R_t}{t} \geq \lambda$ $P$-a.s. Then letting $\lambda \to \lambda_{crit}$ gives
\[
\liminf_{t \to \infty} \frac{R_t}{t} \geq \lambda_{crit} \qquad P \text{-a.s.}
\]
Similarly, if $\lambda > \lambda_{crit}$ then by \eqref{supercrit1} $|N_t^{\lambda t}| = 0$ for all $t$ large enough $P$-almost surely. So for all $t$ large enough $R_t \leq \lambda t$ $P$-almost surely. Thus $\limsup_{t \to \infty} \frac{R_t}{t} \geq \lambda$ $P$-a.s. Then letting $\lambda \to \lambda_{crit}$ gives
\[
\limsup_{t \to \infty} \frac{R_t}{t} \leq \lambda_{crit} \qquad P \text{-a.s.}
\]
which completes the proof.
\end{proof}

\noindent

\end{document}